\documentclass[12pt]{amsart}

\usepackage{amsmath}
\usepackage{amssymb}
\usepackage{ascmac}
\usepackage{amsthm}

\setcounter{tocdepth}{1}

\makeatletter

\@addtoreset{equation}{section}
\makeatother 

\theoremstyle{plain}
\newtheorem{thm}{Theorem}[section]
\newtheorem*{thm*}{Theorem}
\newtheorem{prop}[thm]{Proposition}
\newtheorem{lem}[thm]{Lemma}
\newtheorem{cor}[thm]{Corollary}

\theoremstyle{definition}
\newtheorem{ex}{Example}[section]
\newtheorem{defi}[ex]{Definition}

\theoremstyle{remark}
\newtheorem{rem}{Remark}[section]

\newcommand{\vol}{\operatorname{vol}}
\newcommand{\ric}{\operatorname{Ric}}

\newcommand{\Hess}{\operatorname{Hess}}

\newcommand{\IR}{\operatorname{InRad}}
\newcommand{\PI}{\operatorname{PartInRad}}
\newcommand{\OI}{\operatorname{ObsInRad}}
\newcommand{\SE}{\operatorname{Sep}}
\newcommand{\DS}{\operatorname{BSep}}
\newcommand{\HS}{\operatorname{HS}}

\newcommand{\inte}{\mathrm{Int}\,}

\newcommand{\supp}{\mathrm{supp}\,}

\newcommand{\ball}{B^{n}_{\kappa,\lambda}}
\newcommand{\const}{C_{\kappa,\lambda}}

\newcommand{\bm}{\partial M}

\usepackage{latexsym}

\title[$1$-Lipschitz functions on manifolds with boundary]{Concentration of $1$-Lipschitz functions\\ on manifolds with boundary\\ with Dirichlet boundary condition}
\author{Yohei Sakurai}
\date{August 16, 2018}
\address{Institute for Applied Mathematics, University of Bonn, Endenicher Allee 60, D-53115 Bonn, Germany}
\email{sakurai@iam.uni-bonn.de}
\subjclass[2010]{Primary 53C23; Secondary 58C40}
\keywords{Concentration of measure; Manifold with boundary; Comparison theorem; Ricci curvature; Mean curvature; Dirichlet eigenvalue}

\begin{document}
\maketitle

\begin{abstract}
In this paper,
we consider a concentration of measure problem on Riemannian manifolds with boundary.
We study concentration phenomena of non-negative $1$-Lipschitz functions with Dirichlet boundary condition around zero,
which is called boundary concentration phenomena.
We first examine relation between boundary concentration phenomena and large spectral gap phenomena of Dirichlet eigenvalues of Laplacian.
We will obtain analogue of the Gromov-V. D. Milman theorem and the Funano-Shioya theorem for closed manifolds.
Furthermore,
to capture boundary concentration phenomena,
we introduce a new invariant called the observable inscribed radius.
We will formulate comparison theorems for such invariant under a lower Ricci curvature bound,
and a lower mean curvature bound for the boundary.
Based on such comparison theorems,
we investigate various boundary concentration phenomena of sequences of manifolds with boundary.
\end{abstract}

\section{Introduction}
In the present paper,
we consider a concentration of measure problem on manifolds with boundary.
We study concentration phenomena of non-negative $1$-Lipschitz functions with Dirichlet boundary condition.

%%%%%%%%%%%%%%%%%%%%%%%%
\subsection{Motivations}
Let us recall the following well-known fact:
The normalized volume measure on the $n$-dimensional unit sphere concentrates around the equator when $n$ is large.
One can rephrase this fact as follows:
The normalized volume measure on the $n$-dimensional unit hemisphere concentrates around the boundary when $n$ is large.

We call a triple $X=(X,d_{X},\mu_{X})$ a \textit{(smooth) metric measure space with boundary}
when $X$ is a connected complete Riemannian manifold with boundary,
$d_{X}$ is the Riemannian distance,
and $\mu_{X}$ is a Borel probability measure on $X$.
Let $\partial X$ denote its boundary.
In this paper,
we consider the following problem:
\textit{For a given sequence $\{X_{n}\}$ of metric measure spaces with boundary $X_{n}=(X_{n},d_{X_{n}},\mu_{X_{n}})$,
does the measure $\mu_{X_{n}}$ concentrate around $\partial X_{n}$ when $n$ is large ?}
We will observe that
$\mu_{X_{n}}$ concentrates around $\partial X_{n}$ if and only if
every $1$-Lipschitz function $\varphi_{n}:X_{n}\to [0,\infty)$ with $\varphi_{n}|_{\partial X_{n}}=0$ is closed to zero (more precisely, see Remark \ref{rem:characterization of Dirichlet Levy} and Proposition \ref{prop:characterization of Dirichlet Levy}).
From this point of view,
we investigate concentration phenomena of non-negative $1$-Lipschitz functions with Dirichlet boundary condition around zero.
We call such phenomena \textit{boundary concentration phenomena}.

%%%%%%%%%%%%%%%%%%%%%%%%
\subsection{Observable inscribed radii}
Gromov \cite{G} has established theory of geometry of metric measure spaces
based on the idea of concentration of measure phenomena discovered by L\'evy \cite{Le}, and developed by V. D. Milman \cite{M1}, \cite{M2}.
He has introduced some important invariants on metric measure spaces.
One of them is the so-called \textit{observable diameter}
that measures the difference between $1$-Lipschitz functions and constants.
The observable diameter has been widely studied from the view point of the study of concentration phenomena of $1$-Lipschitz functions (see e.g., \cite{G}, \cite{L2}, \cite{Sh} and the references therein).

We now introduce a new invariant on metric measure spaces with boundary called the \textit{observable inscribed radius}
that measures the difference between non-negative $1$-Lipschitz functions with Dirichlet boundary condition and zero.
We will refer to the formulation of the observable diameter on metric measure spaces.

Let $X=(X,d_{X},\mu_{X})$ be a metric measure space with boundary.
Let $\rho_{\partial X}:X\to [0,\infty)$ stand for the distance function from the boundary $\partial X$ defined as $\rho_{\partial X}(x):=d_{X}(x,\partial X)$.
The function $\rho_{\partial X}$ is $1$-Lipschitz with $\rho_{\partial X}|_{\partial X}=0$.
The \textit{inscribed radius $\IR X$ of $X$} is defined to be the supremum of the distance function $\rho_{\partial X}$ over $X$.
We extend the notion of the inscribed radius to all subsets of $X$.
For $\Omega \subset X$,
we define the \textit{inscribed radius $\IR \Omega$ of $\Omega$} as follows:
If $\Omega \neq \emptyset$,
then
\begin{equation}\label{eq:inscribed radius}
\IR \Omega:=\sup_{x \in \Omega} \rho_{\partial X}(x);
\end{equation}
if $\Omega=\emptyset$,
then $\IR \Omega:=0$.
For $\xi \in (-\infty,1]$,
let us define the \textit{$\xi$-partial inscribed radius $\PI(X;\xi)$ of $X$} by
\begin{equation}\label{eq:partial inscribed radius}
\PI(X;\xi):=\inf_{\Omega \subset X}\,\IR \Omega,
\end{equation}
where the infimum is taken over all Borel subsets $\Omega$ with $\mu_{X}(\Omega) \geq \xi$.

We set a screen $I:= [0,\infty)$.
For a $1$-Lipschitz function $\varphi:X\to I$ with $\varphi|_{\partial X}=0$,
we call the metric measure space with boundary
\begin{equation}\label{eq:screen}
I_{\varphi}:=\left(I,d_{I},m_{I,\varphi}\right)
\end{equation}
the \textit{$\varphi$-screen},
where $m_{I,\varphi}$ denotes the push-forward $\varphi_{\#}\mu_{X}$ of $\mu_{X}$ by $\varphi$.

We now define the following quantity:
\begin{defi}\label{defi:observable inscribed radius}
For $\eta>0$,
we define the \textit{$\eta$-observable inscribed radius $\OI(X;-\eta)$ of $X$} by
\begin{equation*}
\OI(X;-\eta):=\sup_{\varphi}\,\PI(I_{\varphi};1-\eta),
\end{equation*}
where the supremum is taken over all $1$-Lipschitz functions $\varphi:X\to I$ with $\varphi|_{\partial X}=0$.
\end{defi}
We remark that
$\OI(X;-\eta)=0$ for all $\eta \geq 1$.
Furthermore,
$\OI(X;-\eta)$ is monotone non-increasing in $\eta$.

We also introduce the following notion:
\begin{defi}\label{defi:Dirichlet Levy family}
We say that
a sequence $\{X_{n}\}$ of metric measure spaces with boundary is a \textit{boundary concentration family} if
for every $\eta>0$
\begin{equation*}
\lim_{n\to \infty} \, \OI \left(X_{n};-\eta \right)=0.
\end{equation*}
\end{defi}

\begin{rem}\label{rem:characterization of Dirichlet Levy}
Let $\{X_{n}\}$ denote a sequence of metric measure spaces with boundary $X_{n}=(X_{n},d_{X_{n}},\mu_{X_{n}})$.
By the definition of the observable inscribed radius,
$\{X_{n}\}$ is a boundary concentration family if and only if
for every sequence $\{\varphi_{n}\}$ of $1$-Lipschitz functions $\varphi_{n}:X_{n}\to I$ with $\varphi_{n}|_{\partial X_{n}}=0$,
we have $d_{KF}(\varphi_{n},0) \to 0$ as $n\to \infty$,
where $d_{KF}(\varphi_{n},0)$ is the \textit{Ky Fan metric} between $\varphi_{n}$ and $0$ defined as
\begin{equation*}
d_{KF}(\varphi_{n},0):=\inf \left\{\, \epsilon \geq 0 \mid \mu_{X_{n}} \left(  \left\{\, x\in X_{n} \mid \varphi_{n}(x)>\epsilon \right\} \right) \leq \epsilon \,\right\}.
\end{equation*}
We further see that the following are equivalent (see Proposition \ref{prop:characterization of Dirichlet Levy}):
\begin{enumerate}\setlength{\itemsep}{+0.7mm}
\item $\{X_{n}\}$ is a boundary concentration family;
\item for every sequence $\{\Omega_{n}\}$ of Borel subsets $\Omega_{n} \subset X_{n}$ satisfying $\liminf_{n\to \infty} \mu_{X_{n}}(\Omega_{n})>0$,
         we have $\lim_{n\to \infty}\, d_{X_{n}}(\Omega_{n},\partial X_{n})=0$;
\item $\lim_{n\to \infty}\,\mu_{X_{n}}(B_{r}(\partial X_{n}))=1$ for every $r>0$,
         where $B_{r}(\partial X_{n})$ is the closed $r$-neighborhood of $\partial X_{n}$.
\end{enumerate}
\end{rem}

%%%%%%%%%%%%%%%%%%%%%%%%
\subsection{Dirichlet eigenvalues and concentration phenomena}
We study relation between boundary concentration phenomena and large spectral gap phenomena for Dirichlet eigenvalues of Laplacian.
For $n\geq 2$,
let $(M,d_{M},m_{M,f})$ be an $n$-dimensional weighted Riemannian manifold with boundary,
namely,
$M=(M,g)$ be an $n$-dimensional,
connected complete Riemannian manifold with boundary,
$d_{M}$ is the Riemannian distance on $M$,
and
\begin{equation}\label{eq:weighted volume measure}
m_{M,f}:=e^{-f} \,\vol_{M}
\end{equation}
for some smooth function $f:M\to \mathbb{R}$,
where $\vol_{M}$ is the Riemannian volume measure on $M$.
The \textit{weighted Laplacian $\Delta_{f}$} is defined by
\begin{equation}\label{eq:weighted Laplacian}
\Delta_{f}:=\Delta+g(\nabla f,\nabla \cdot),
\end{equation}
where $\nabla$ is the gradient,
and $\Delta$ is the Laplacian defined as the minus of the trace of Hessian.
In the case where $M$ is compact,
we consider the following Dirichlet eigenvalue problem with respect to $\Delta_{f}$:
\begin{equation*}
\begin{cases}
  \Delta_{f}\, \phi=\nu\, \phi & \text{in $\inte M$};\\
  \phi=0                        & \text{on $\bm$},
\end{cases}
\end{equation*}
where $\inte M$ denotes the interior of $M$.
We denote by
\begin{equation}\label{eq:notation of Dirichlet eigenvalue}
0< \nu_{f,1}(M) < \nu_{f,2}(M) \leq \dots \leq \nu_{f,k}(M)\leq \dots \nearrow +\infty
\end{equation}
the all Dirichlet eigenvalues of $\Delta_{f}$,
counting multiplicity.

For a smooth function $f:M\to \mathbb{R}$ such that $m_{M,f}$ is a Borel probability measure,
we study the metric measure space with boundary
\begin{equation}\label{eq:weighted Riemannian manifold}
(M,f):=(M,d_{M},m_{M,f}).
\end{equation}
Our first main result is the following:
\begin{thm}\label{thm:Dirichlet eigenvalue and Dirichlet Levy family}
Let $\{(M_{n},f_{n})\}$ be a sequence of compact metric measure spaces with boundary defined as $(\ref{eq:weighted Riemannian manifold})$.
If we have $\nu_{f_{n},1}(M_{n})\to \infty$ as $n\to \infty$,
then $\{(M_{n},f_{n})\}$ is a boundary concentration family.
\end{thm}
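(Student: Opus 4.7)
The plan is to bound $\OI(M_n;-\eta)$ via the variational characterisation of the first Dirichlet eigenvalue applied to the $1$-Lipschitz test function itself, and then convert the resulting $L^{2}$-bound into a concentration statement by Chebyshev's inequality. Fix $\eta>0$ and $\epsilon>0$. Given any $1$-Lipschitz $\varphi:M_{n}\to I$ with $\varphi|_{\partial M_{n}}=0$, I would like to use $\varphi$ as a test function in the Rayleigh quotient:
\begin{equation*}
\nu_{f_{n},1}(M_{n})\int_{M_{n}}\varphi^{2}\,dm_{M_{n},f_{n}}\leq \int_{M_{n}}|\nabla \varphi|^{2}\,dm_{M_{n},f_{n}}\leq m_{M_{n},f_{n}}(M_{n})=1,
\end{equation*}
where the second inequality uses that $\varphi$ is $1$-Lipschitz, so $|\nabla\varphi|\leq 1$ almost everywhere by Rademacher's theorem, and the last equality uses that $m_{M_{n},f_{n}}$ is a probability measure.

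Chebyshev's inequality then yields
\begin{equation*}
m_{M_{n},f_{n}}\bigl(\{x\in M_{n}\mid \varphi(x)>\epsilon\}\bigr)\leq \frac{1}{\epsilon^{2}}\int_{M_{n}}\varphi^{2}\,dm_{M_{n},f_{n}}\leq \frac{1}{\epsilon^{2}\,\nu_{f_{n},1}(M_{n})}.
\end{equation*}
Taking $n$ so large that $1/(\epsilon^{2}\nu_{f_{n},1}(M_{n}))\leq \eta$, we get $m_{I,\varphi}((\epsilon,\infty))\leq \eta$, i.e.\ $m_{I,\varphi}([0,\epsilon])\geq 1-\eta$. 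Since the boundary of the screen $I=[0,\infty)$ is $\{0\}$, the distance function $\rho_{\partial I}$ coincides with the identity, so $\IR\,[0,\epsilon]=\epsilon$, and the set $\Omega=[0,\epsilon]$ exhibits $\PI(I_{\varphi};1-\eta)\leq \epsilon$. Taking the supremum over all admissible $\varphi$ gives $\OI(M_{n};-\eta)\leq \epsilon$ for all sufficiently large $n$, and since $\epsilon$ is arbitrary this proves $\OI(M_{n};-\eta)\to 0$.

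The main obstacle is the justification of using a merely $1$-Lipschitz (hence not necessarily smooth) function $\varphi$ as a test function in the variational characterisation
\begin{equation*}
\nu_{f_{n},1}(M_{n})=\inf_{u\in H^{1}_{0}(M_{n})\setminus \{0\}}\frac{\int_{M_{n}}|\nabla u|^{2}\,dm_{M_{n},f_{n}}}{\int_{M_{n}}u^{2}\,dm_{M_{n},f_{n}}}.
\end{equation*}
To handle this, I would verify $\varphi\in H^{1}_{0}(M_{n})$ by the standard truncation-and-mollification argument: the $1$-Lipschitz property combined with $\varphi|_{\partial M_{n}}=0$ forces $\varphi(x)\leq \rho_{\partial M_{n}}(x)$, so $\varphi_{k}:=(\varphi-1/k)_{+}$ vanishes in an open neighbourhood of $\partial M_{n}$ and converges to $\varphi$ in $H^{1}(M_{n})$, placing $\varphi$ in the closure of $C^{\infty}_{c}(\mathrm{Int}\,M_{n})$. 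Once this density step is in place, the computation above goes through verbatim and concludes the proof.
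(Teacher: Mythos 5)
Your proof is correct, but it takes a genuinely different route from the paper's. The paper first proves Lemma \ref{lem:Dirichlet eigenvalue and Dirichlet separation distance}: given a Borel set $\Omega$ with $m_{M,f}(\Omega)\geq\eta$ at distance $S$ from $\bm$, it builds a ``tent'' function equal to $1$ on $\Omega$ and supported in $B_{S/2}(\Omega)$, feeds it into the min--max principle to get $\nu_{f,1}(M)\leq 4/(\eta S^{2})$, hence $\DS((M,f);\eta)\leq 2/\sqrt{\nu_{f,1}(M)\,\eta}$, and then invokes Lemma \ref{lem:observable inscribed radius is smaller than Dirichlet separation distance} ($\OI\leq\DS$) to obtain Proposition \ref{prop:logarithmic Dirichlet eigenvalue and observable inscribed radius}. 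You instead plug the $1$-Lipschitz function $\varphi$ itself into the Rayleigh quotient and apply Chebyshev to the screen measure, bypassing the boundary separation distance entirely. Your argument is shorter, and it even yields the slightly sharper quantitative bound $\OI((M,f);-\eta)\leq 1/\sqrt{\nu_{f,1}(M)\,\eta}$ (take $\epsilon=1/\sqrt{\eta\,\nu_{f_{n},1}(M_{n})}$), improving the paper's constant $2$ to $1$. Your justification that $\varphi\in H^{1}_{0}$ is sound: $\varphi\leq\rho_{\bm}$ forces the truncations $(\varphi-1/k)_{+}$ to vanish near $\bm$, they converge to $\varphi$ in $H^{1}$, and $\nabla\varphi=0$ a.e.\ on $\{\varphi=0\}$; the only cosmetic omission is the trivial case $\varphi\equiv 0$, for which $\PI(I_{\varphi};1-\eta)=0$ anyway. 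What the paper's longer detour buys is reusability: the separation-distance lemma is stated for all $k$ eigenvalues at once and is the ingredient needed for Theorem \ref{thm:higher order Dirichlet eigenvalue and Dirichlet Levy family}, whereas your direct test-function trick is tied to the first eigenvalue, since a single $1$-Lipschitz function cannot produce the $k$ mutually orthogonal test functions required there.
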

This is an analogue of the Gromov-V. D. Milman theorem for closed manifolds (compact manifolds without boundary) (see Theorem 4.1 and its corollary in \cite{GM}).
We show Theorem \ref{thm:Dirichlet eigenvalue and Dirichlet Levy family} by using relation between the observable inscribed radii and $\nu_{f,1}(M)$ (see Proposition \ref{prop:logarithmic Dirichlet eigenvalue and observable inscribed radius}).

For higher eigenvalues,
we will establish the following assertion under $\ric^{\infty}_{f,M}\geq 0$ and $H_{f,\bm}\geq 0$,
where $\ric^{\infty}_{f,M}$ and $H_{f,\bm}$ are the infimum of the $\infty$-weighted Ricci curvature and the weighted mean curvature on $M$ and on $\bm$,
respectively (more precisely, see Subsection \ref{sec:Curvatures}):
\begin{thm}\label{thm:higher order Dirichlet eigenvalue and Dirichlet Levy family}
Let $\{(M_{n},f_{n})\}$ be a sequence of compact metric measure spaces with boundary defined as $(\ref{eq:weighted Riemannian manifold})$.
Assume that
$\ric^{\infty}_{f_{n},M_{n}}\geq 0$ and $H_{f_{n},\bm_{n}}\geq 0$.
If there exists $k\geq 1$ such that $\nu_{f_{n},k}(M_{n})\to \infty$ as $n\to \infty$,
then $\{(M_{n},f_{n})\}$ is a boundary concentration family.
\end{thm}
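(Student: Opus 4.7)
The plan is to reduce Theorem~\ref{thm:higher order Dirichlet eigenvalue and Dirichlet Levy family} to Theorem~\ref{thm:Dirichlet eigenvalue and Dirichlet Levy family} via the dimension-free comparison
\begin{equation*}
\nu_{f,k}(M)\le C(k)\,\nu_{f,1}(M),
\end{equation*}
valid for every compact weighted manifold with boundary satisfying $\ric^{\infty}_{f,M}\ge 0$ and $H_{f,\bm}\ge 0$, with $C(k)$ depending only on $k$. Granting this, $\nu_{f_n,k}(M_n)\to\infty$ forces $\nu_{f_n,1}(M_n)\to\infty$, and Theorem~\ref{thm:Dirichlet eigenvalue and Dirichlet Levy family} finishes the proof. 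The inequality is the Dirichlet boundary analogue of Funano--Shioya's $\lambda_k\le C(k)\lambda_1$ for closed manifolds under $\ric\ge 0$, and I would prove it by the same overall strategy.

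The argument splits in two. First, a Courant--Fischer--Weyl upper bound via separation: if there exist $k$ pairwise disjoint Borel subsets $A_1,\dots,A_k\subset M$ with $m_{M,f}(A_j)\ge\eta$, mutual distances $\ge t$, and each $d_M(A_j,\bm)\ge t$, then the hat functions $\varphi_j(x):=(t-d_M(x,A_j))_+$ lie in $H^{1}_{0}(M)$, have pairwise disjoint supports and satisfy $|\nabla\varphi_j|\le 1$. Feeding them into the min-max characterization of $\nu_{f,k}(M)$ for $\Delta_f$ gives
\begin{equation*}
\nu_{f,k}(M)\le C(\eta)\,t^{-2}.
\end{equation*}
Second, one must produce such separated data from an upper bound $\nu_{f,1}(M)\le\Lambda$. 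The curvature hypotheses enter here through an expected E.~Milman-type converse of Proposition~\ref{prop:logarithmic Dirichlet eigenvalue and observable inscribed radius}:
\begin{equation*}
\OI(X;-\eta)\ge c(\eta)\,\nu_{f,1}(M)^{-1/2},
\end{equation*}
valid under $\ric^{\infty}_{f,M}\ge 0$ and $H_{f,\bm}\ge 0$. Combined with the isoperimetric consequences of the same curvature bounds, this should allow one to carve out $k$ disjoint annular pieces of tubes around $\bm$, each of $m_{M,f}$-mass at least $c(k)$, pairwise separated and separated from $\bm$ by a distance of order $\Lambda^{-1/2}/k$. Combining with the first ingredient then yields $\nu_{f,k}(M)\le C(k)\Lambda=C(k)\nu_{f,1}(M)$.

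The main obstacle is the E.~Milman-type lower bound $\OI(X;-\eta)\gtrsim\nu_{f,1}(M)^{-1/2}$ under $\ric^{\infty}_{f,M}\ge 0$ and $H_{f,\bm}\ge 0$, which is the Dirichlet analogue of E.~Milman's equivalence between concentration and spectral gap for log-concave measures. I expect it to follow by combining the comparison theorems for $\OI$ announced in the abstract -- which under the curvature hypotheses reduce $\OI$ to a one-dimensional model-space computation -- with a Cheeger--Buser-type half-inequality adapted to the Dirichlet boundary condition. Once that ingredient is in place, the separation/min-max part of the argument is routine.
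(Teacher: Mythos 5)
Your top-level reduction is the right one and matches the paper: everything hinges on the universal inequality $\nu_{f,k}(M)\leq C(k)\,\nu_{f,1}(M)$ under $\ric^{\infty}_{f,M}\geq 0$ and $H_{f,\bm}\geq 0$ (Theorem \ref{thm:universal inequality}), after which one concludes via Proposition \ref{prop:logarithmic Dirichlet eigenvalue and observable inscribed radius} and Theorem \ref{thm:Dirichlet eigenvalue and Dirichlet Levy family}. The gap is in your proof of that inequality. Your second ingredient --- the E.~Milman-type converse $\OI((M,f);-\eta)\geq c(\eta)\,\nu_{f,1}(M)^{-1/2}$ --- is not available: it is essentially the statement the author singles out as an open question in Remark \ref{rem:conjecture}, and nothing in the announced comparison theorems yields it (those comparisons bound $\OI$ from \emph{above} by model quantities; they do not bound it from below by the spectral gap). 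Moreover, even granting such a converse, it only produces \emph{one} set of definite measure at definite distance from $\bm$; manufacturing $k$ pairwise disjoint, mutually separated sets each of mass at least $c(k)$ is a genuinely multi-way statement that your sketch does not supply. Your first ingredient (the min-max bound from separated sets) is fine and is exactly Lemma \ref{lem:Dirichlet eigenvalue and Dirichlet separation distance}, but it is the converse direction that carries all the weight.

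The paper circumvents both difficulties by working with the Dirichlet isoperimetric constant $\mathcal{I}_{f}(M)$ instead of separated sets. The improved Cheeger inequality of Kwok--Lau--Lee--Oveis Gharan--Trevisan, in Liu's manifold form adapted to the Dirichlet condition (Theorem \ref{thm:higer order eigenvalue and isoperimetric}), gives $\mathcal{I}_{f}(M)\leq 8\sqrt{2}\,k\,\nu_{f,1}(M)/\sqrt{\nu_{f,k}(M)}$ with no curvature hypothesis, while Wang's gradient estimate for the Dirichlet heat semigroup yields the Buser--Ledoux-type lower bound $\mathcal{I}_{f}(M)\geq c\,\sqrt{\nu_{f,1}(M)}$ under the stated curvature assumptions (Theorem \ref{thm:Buser Ledoux}). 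Chaining the two gives $\nu_{f,k}(M)\leq C\,k^{2}\,\nu_{f,1}(M)$ directly. To complete your argument you should replace the E.~Milman step by this isoperimetric route; as written, the proposal rests on an unproved (and, in this Dirichlet setting, apparently open) assertion.
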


Theorem \ref{thm:higher order Dirichlet eigenvalue and Dirichlet Levy family} is an analogue of the Funano-Shioya theorem for closed manifolds of non-negative $\infty$-weighted Ricci curvature (see Corollary 1.4 in \cite{FSh}).
One of key ingredients of the proof is to obtain an upper bound of the ratio $\nu_{f,k}(M)/\nu_{f,1}(M)$ in terms of $C\,k^{2}$ for some universal constant $C>0$ under $\ric^{\infty}_{f,M}\geq 0$ and $H_{f,\bm}\geq 0$ (see Theorem \ref{thm:universal inequality}).
We obtain such universal estimate by combining an improved Cheeger inequality of Kwak, Lau, Lee, Oveis Gharan and Trevisan \cite{KLLOT},
and an isoperimetric inequality of Wang \cite{Wa} (see Subsections \ref{sec:Dirichlet isoperimetric constants}, \ref{sec:Isoperimetric inequalities}).

%%%%%%%%%%%%%%%%%%%%%%%%
\subsection{Comparisons and concentration phenomena}\label{sec:Comparisons and concentration phenomena}
To understand boundary concentration phenomena,
we establish comparison theorems of the observable inscribed radii under a lower curvature Ricci curvature bound,
and a lower mean curvature bound for the boundary.

We first present finite dimensional comparisons.
Let $M$ be an $n$-dimensional,
connected complete Riemannian manifold with boundary with $\vol_{M}(M)<\infty$.
We study metric measure space with boundary
\begin{equation}\label{eq:normalized metric measure space with boundary}
M:=(M,d_{M},m_{M}),\quad m_{M}:=\frac{1}{\vol_{M} (M)} \,\vol_{M}.
\end{equation}
Let $\ric_{\bm^{\perp}}$ stand for the infimum of the Ricci curvature in the $\bm$-radial direction on $M$,
and $H_{\bm}$ the infimum of the mean curvature on $\bm$ (more precisely, see Subsection \ref{sec:Curvatures}).

For $\kappa \in \mathbb{R}$,
let $M^{n}_{\kappa}$ be the $n$-dimensional space form with constant curvature $\kappa$.
For $\lambda \in \mathbb{R}$,
we say that $\kappa$ and $\lambda$ satisfy the \textit{ball-condition}
if there exists a closed ball $\ball$ in $M^{n}_{\kappa}$ whose boundary has constant mean curvature $(n-1)\lambda$.
Let $\const$ denote its radius.
Note that $\kappa$ and $\lambda$ satisfy the ball-condition if and only if either
(1) $\kappa>0$; 
(2) $\kappa=0$ and $\lambda>0$;
or (3) $\kappa<0$ and $\lambda>\sqrt{\vert \kappa \vert}$.
We say that $\kappa$ and $\lambda$ satisfy the \textit{convex-ball-condition}
if they satisfy the ball-condition and $\lambda \geq 0$.

Let us prepare the following finite dimensional model spaces: 
(1) For $\kappa$ and $\lambda$ satisfying the ball-condition,
we call the metric measure space with boundary
\begin{equation}\label{eq:ball model}
\ball=\bigl(\ball,d_{\ball},m_{\ball} \bigl)
\end{equation}
the \textit{ball-model-space};
(2) For $\kappa<0$ and $\lambda:=\sqrt{\vert \kappa \vert}$,
we consider the warped product space $M^{n}_{\kappa,\lambda}:=([0,\infty) \times \mathbb{S}^{n-1},dt^{2}+e^{-2\lambda t}\,ds_{n-1}^{2})$,
where $(\mathbb{S}^{n-1},ds_{n-1}^{2})$ is the $(n-1)$-dimensional standard unit sphere.
We call the metric measure space with boundary
\begin{equation}\label{eq:warped product model}
M^{n}_{\kappa,\lambda}=\bigl(M^{n}_{\kappa,\lambda},d_{M^{n}_{\kappa,\lambda}},m_{M^{n}_{\kappa,\lambda}}\bigl)
\end{equation}
the \textit{warped-product-model-space} (cf. Remark \ref{rem:formulation of finite dimensional model spaces}).

We have the following finite dimensional comparison theorem:
\begin{thm}\label{thm:main theorem}
Let $\bm$ be compact.
We assume $\ric_{\bm^{\perp}}\geq (n-1)\kappa$ and $H_{\bm}\geq (n-1)\lambda$.
Then for every $\eta \in (0,1]$
the following hold:
\begin{enumerate}
\item if $\kappa$ and $\lambda$ satisfy the ball-condition,
         then
         \begin{equation*}
         \OI(M;-\eta) \leq \OI(\ball;-\eta);
         \end{equation*}
\item if $\kappa<0$ and $\lambda=\sqrt{\vert \kappa \vert}$,
         then
         \begin{equation*}
         \OI(M;-\eta) \leq \OI(M^{n}_{\kappa,\lambda};-\eta).
         \end{equation*}
\end{enumerate}
\end{thm}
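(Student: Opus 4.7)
My plan is to reduce the comparison to a one-dimensional statement for the distance function $\rho_{\bm}$, and then apply a Heintze--Karcher/Kasue-type volume comparison to dominate the distribution of $\rho_{\bm}$ by that of its analogue on the model space.

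The first step is to show that $\OI(M;-\eta)$ is realized by $\rho_{\bm}$. If $\varphi\colon M\to I$ is any $1$-Lipschitz function with $\varphi|_{\bm}=0$ and $\varphi\ge 0$, then for $x\in M$ and $y\in\bm$ the Lipschitz inequality gives $\varphi(x)=\varphi(x)-\varphi(y)\le d_M(x,y)$; infimizing over $y$ yields $\varphi\le \rho_{\bm}$ pointwise. Hence $\{\varphi\le r\}\supset\{\rho_{\bm}\le r\}$ for every $r\ge 0$. Since $\partial I=\{0\}$ and $\rho_{\partial I}(t)=t$, the infimum defining $\PI(I_{\psi};1-\eta)$ is realized on initial intervals $[0,r]$ and equals the $(1-\eta)$-quantile of $\psi_{\#}\mu_M$. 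It follows that $\PI(I_{\varphi};1-\eta)\le \PI(I_{\rho_{\bm}};1-\eta)$, and since $\rho_{\bm}$ is itself admissible we obtain $\OI(M;-\eta)=\PI(I_{\rho_{\bm}};1-\eta)$. The same identity holds on the model space, reducing the theorem to the claim that the $(1-\eta)$-quantile of $\rho_{\bm}$ under $m_M$ is dominated by the corresponding quantile on the model.

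The second step is the quantile domination, which I would derive from the stochastic dominance
\[
m_M\bigl(\{\rho_{\bm}>r\}\bigr)\le m_{\mathrm{model}}\bigl(\{\rho_{\partial\mathrm{model}}>r\}\bigr)\qquad (r\ge 0),
\]
where $\mathrm{model}$ denotes $\ball$ in case (1) and $M^{n}_{\kappa,\lambda}$ in case (2). To prove this, parametrize a dense open subset of $M$ by the inward normal exponential map from $\bm$, and apply the Riccati/Jacobi equation to the normal Jacobian $J(t,p)$: the initial data is controlled by $H_{\bm}\ge(n-1)\lambda$ and the evolution by $\ric_{\bm^{\perp}}\ge (n-1)\kappa$, giving a pointwise bound $J(t,p)\le J_{\mathrm{model}}(t)$ up to the focal/cut time. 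Integrating over the compact boundary $\bm$ and cutting at the focal/cut radius yields the Gromov--Bishop-type monotonicity of the ratio
\[
r\longmapsto\frac{\vol_M(\{\rho_{\bm}>r\})}{\vol_{\mathrm{model}}(\{\rho_{\partial\mathrm{model}}>r\})},
\]
and dividing through by its value at $r=0$ converts this into the stated dominance for the normalized probability measures. A stochastic dominance immediately dominates quantiles, completing the proof.

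The main obstacle I expect lies in the volume comparison. Beyond the usual care required at the focal/cut locus of $\bm$, one must cover the possibly non-convex subcase $\lambda<0$ in (1) and, in case (2), the warped-product model, which has infinite inscribed radius yet finite volume thanks to the integrable warping factor $e^{-2\lambda t}$ with $\lambda=\sqrt{|\kappa|}>0$; the relevant model Jacobian is then $e^{-(n-1)\lambda t}$ rather than a ball-model Jacobian. These ingredients form a Heintze--Karcher--Kasue comparison adapted to the boundary setting and should be available from the comparison geometry developed earlier in the paper, so that once they are in hand the reduction to $\rho_{\bm}$ makes the rest automatic.
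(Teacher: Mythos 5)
Your proposal is correct and follows essentially the same route as the paper: the reduction to the distribution of $\rho_{\bm}$ (your quantile identity is a streamlined version of the paper's passage through the boundary separation distance via Lemmas \ref{lem:observable inscribed radius is smaller than Dirichlet separation distance} and \ref{lem:observable and separation on model} with Proposition \ref{prop:Kazukawa}), and the key input is the same relative volume comparison for metric neighborhoods of $\bm$ (Theorem \ref{thm:volume comparison}), whose monotone-ratio form yields exactly your stochastic dominance $m_{M}(B_{r}(\bm))\geq m_{\mathrm{model}}(B_{r}(\partial\,\mathrm{model}))$. The only caveat is that the Gromov--Bishop monotonicity of the tail-volume ratio requires the monotonicity of $\theta(t,z)/s_{\kappa,\lambda}^{n-1}(t)$ in $t$, not merely the pointwise bound $J(t,p)\leq J_{\mathrm{model}}(t)$, but this is precisely what the Riccati comparison you invoke provides.
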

\begin{rem}\label{rem:finite volume}
For (1) of Theorem \ref{thm:main theorem},
we always have $\vol_{M} (M)<\infty$.
Indeed,
the Heintze-Karcher theorem (Theorem 2.1 in \cite{HK}) leads to
\begin{equation*}
\frac{\vol_{M} (M)}{\vol_{\bm} (\bm)}\leq \frac{\vol_{\ball} (\ball)}{\vol_{\partial \ball} (\partial \ball)}.
\end{equation*}
Similarly,
for (2),
the Heintze-Karcher theorem guarantees that
the Riemannian volume $\vol_{M} (M)$ of $M$ is finite.
\end{rem}

The proof of Theorem \ref{thm:main theorem} is based on comparison geometry of manifolds with boundary
established by Heintze and Karcher \cite{HK}, Kasue \cite{K1}, \cite{K2}, the author \cite{S1}, \cite{S2}, and so on (see Subsection \ref{sec:Comparison theorems}).
We first estimate observable inscribed radii under lower weighted curvature bounds by using relative volume comparison theorems for metric neighborhoods of boundaries (see Theorems \ref{thm:weighted main theorem} and \ref{thm:twisted weighted main theorem}).
We conclude Theorem \ref{thm:main theorem} by computing the observable inscribed radii of finite dimensional model spaces (see Lemma \ref{lem:observable and separation on model}).

We next produce infinite dimensional comparisons for metric measure spaces with boundary $(M,f)$ defined as (\ref{eq:weighted Riemannian manifold}) under $\ric^{\infty}_{f,\bm^{\perp}}\geq K$ and $H_{f,\bm}\geq \Lambda$ for $K,\Lambda\in \mathbb{R}$,
where $\ric^{\infty}_{f,\bm^{\perp}}$ is the infimum of the $\infty$-weighted Ricci curvature in the $\bm$-radial direction on $M$ (see Subsection \ref{sec:Curvatures}).
We prepare the following infinite dimensional model spaces: 
(1) For $K>0,\,\Lambda \in \mathbb{R}$,
we call the metric measure space with boundary
\begin{equation}\label{eq:Gaussian model space}
G_{K,\Lambda}:=\left(I,d_{I}, \frac{e^{-\frac{K}{2}\,t^{2}-\Lambda\,t}}{\int_{I}\,e^{-\frac{K}{2}\,t^{2}-\Lambda\,t}\,    dt} \vol_{I} \right)
\end{equation}
the \textit{half-Gaussian-model-space};
(2) For $\Lambda>0$,
we call
\begin{equation}\label{eq:exponential model space}
E_{\Lambda}:=\left(I,d_{I}, \Lambda\,e^{-\Lambda\,t} \vol_{I} \right)
\end{equation}
the \textit{exponential-model-space}.
We remark that
for $K,\Lambda\in \mathbb{R}$,
the value $\int_{I}\,e^{-\frac{K}{2}\,t^{2}-\Lambda\,t}\,    dt$ is finite if and only if
either (1) $K>0$;
or (2) $K=0$ and $\Lambda>0$.
Moreover,
if $K=0$ and $\Lambda>0$,
then we see
\begin{equation*}
\frac{e^{-\frac{K}{2}\,t^{2}-\Lambda\,t}}{\int_{I}\,e^{-\frac{K}{2}\,t^{2}-\Lambda\,t}\,    dt}=\Lambda\,e^{-\Lambda\,t}.
\end{equation*}
We observe that
our infinite dimensional model spaces appear as limits of sequences of finite dimensional model spaces (see Subsection \ref{sec:Infinite dimensional model spaces and distribution laws}).

We have the following infinite dimensional comparison:
\begin{thm}\label{thm:infinite dimensional main theorem}
Let $\bm$ be compact.
Let us assume $\ric^{\infty}_{f,\bm^{\perp}}\geq K$ and $H_{f,\bm}\geq \Lambda$.
Then for every $\eta \in (0,1]$
the following hold:
\begin{enumerate}
\item if $K>0$ and $\Lambda \in \mathbb{R}$,
         then
         \begin{equation*}
         \OI((M,f);-\eta) \leq \OI(G_{K,\Lambda};-\eta);
         \end{equation*}
\item if $K=0$ and $\Lambda>0$,
         then
         \begin{equation*}
         \OI((M,f);-\eta) \leq \OI(E_{\Lambda};-\eta).
         \end{equation*}
\end{enumerate}
\end{thm}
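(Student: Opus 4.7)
My plan is to reduce Theorem \ref{thm:infinite dimensional main theorem} to a one-dimensional mass comparison for the distance-from-boundary function on $(M,f)$, and then to prove that comparison by a weighted tube-volume estimate around $\bm$.

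First, I would identify $\rho_{\bm}$ as the extremal test function in Definition \ref{defi:observable inscribed radius}. For any $1$-Lipschitz $\varphi:M\to I$ with $\varphi|_{\bm}=0$, picking $y\in\bm$ and using $|\varphi(x)-\varphi(y)|\leq d_{M}(x,y)$ gives $\varphi\leq\rho_{\bm}$ pointwise. Since the screen $I_{\varphi}$ has boundary $\{0\}$ and $\IR\Omega=\sup_{t\in\Omega}t$ for $\Omega\subset I$, truncation of $\Omega$ to $[0,\sup\Omega]$ preserves $\IR\Omega$ and cannot decrease $m_{I,\varphi}(\Omega)$, whence
\begin{equation*}
\PI(I_{\varphi};1-\eta)=\inf\bigl\{r\geq 0 : m_{I,\varphi}([0,r])\geq 1-\eta\bigr\}.
\end{equation*}
Because $\varphi\leq\rho_{\bm}$ pointwise, $m_{I,\varphi}([0,r])\geq m_{M,f}(B_{r}(\bm))$ for every $r$, and $\rho_{\bm}$ is itself admissible, so the supremum in Definition \ref{defi:observable inscribed radius} is attained at $\rho_{\bm}$:
\begin{equation*}
\OI((M,f);-\eta)=\inf\bigl\{r\geq 0 : m_{M,f}(B_{r}(\bm))\geq 1-\eta\bigr\}.
\end{equation*}
Exactly the same reasoning applied to $G_{K,\Lambda}$ and $E_{\Lambda}$, whose boundary is $\{0\}$ and whose distance-from-boundary function is $t\mapsto t$, gives the analogous formula. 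Theorem \ref{thm:infinite dimensional main theorem} thus reduces to the single mass inequality
\begin{equation*}
m_{M,f}\bigl(B_{r}(\bm)\bigr)\geq m_{G_{K,\Lambda}}\bigl([0,r]\bigr)\qquad\text{for every }r\geq 0
\end{equation*}
in case (1), together with its analogue for $E_{\Lambda}$ in case (2).

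Second, I would produce this inequality through a weighted relative volume comparison for tubular neighborhoods of $\bm$. Parametrize a neighborhood of $\bm$ via the normal exponential map $(t,u)\mapsto \exp_{u}^{\perp}(t\,N_{u})$ and let $\theta_{f}(t,u)$ denote the associated $e^{-f}$-weighted Jacobian, extended by zero past the focal cut locus. Writing $\theta^{*}_{K,\Lambda}(t):=e^{-\Lambda t-Kt^{2}/2}$ for the model profile, the assumptions $\ric^{\infty}_{f,\bm^{\perp}}\geq K$ and $H_{f,\bm}\geq\Lambda$ together with the matrix Riccati equation for the shape operators of the parallel hypersurfaces and the weighted Bochner identity give the pointwise monotonicity
\begin{equation*}
\partial_{t}\log\frac{\theta_{f}(t,u)}{\theta^{*}_{K,\Lambda}(t)}\leq 0
\end{equation*}
on the regular set, so that the weighted area $A_{f}(t):=\int_{\bm}\theta_{f}(t,u)\,du_{\bm}$ satisfies $A_{f}(t)/\theta^{*}_{K,\Lambda}(t)$ non-increasing in $t$; compactness of $\bm$ ensures the integral is finite at $t=0$. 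A standard Bishop-Gromov integration of such a monotone ratio yields
\begin{equation*}
\frac{\int_{0}^{r}A_{f}(t)\,dt}{\int_{0}^{\infty}A_{f}(t)\,dt}\geq \frac{\int_{0}^{r}\theta^{*}_{K,\Lambda}(t)\,dt}{\int_{0}^{\infty}\theta^{*}_{K,\Lambda}(t)\,dt},
\end{equation*}
and, by the coarea formula together with the fact that the cut locus of $\bm$ has Riemannian measure zero, the left-hand side equals $m_{M,f}(B_{r}(\bm))$ and the right-hand side equals $m_{G_{K,\Lambda}}([0,r])$. Case (2) follows either by taking $K\to 0$ in case (1) or by the direct observation from the excerpt that $G_{0,\Lambda}$ and $E_{\Lambda}$ share the same normalized density.

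The main obstacle is the weighted matrix Riccati analysis in the $N=\infty$ Bakry-\'Emery setting. Unlike the finite-dimensional situation underlying Theorem \ref{thm:main theorem}, the infinite-dimensional Ricci bound admits no dimensional correction, so the integrated Jacobi comparison must pick up the quadratic Gaussian exponent $-Kt^{2}/2$ rather than a trigonometric or hyperbolic profile, producing the model density $e^{-Kt^{2}/2-\Lambda t}$. One must also carry the weight $e^{-f}$ through the Jacobi equation and verify that the first focal time and the full cut locus of $\bm$ contribute nothing to the weighted volume, using compactness of $\bm$ and the standard regularity of the normal exponential map from the boundary as in the cited work of Heintze-Karcher, Kasue, and the author. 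Once the tube comparison is in place, combining it with the reduction above yields both statements of the theorem simultaneously.
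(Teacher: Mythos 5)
Your argument is correct, and its geometric core coincides with the paper's: the pointwise monotonicity of $\theta_{f}(t,z)/e^{-\frac{K}{2}t^{2}-\Lambda t}$ is exactly Lemma \ref{lem:volume element comparison} (obtained from the Bochner formula via the Laplacian comparison of Lemma \ref{lem:Laplacian comparison}), and your integration of the monotone ratio is Theorem \ref{thm:relative volume comparison} specialized to $R=\IR M$, which together with $m_{M,f}(M)=1$ yields your mass inequality $m_{M,f}(B_{r}(\bm))\geq m_{G_{K,\Lambda}}([0,r])$. Where you genuinely diverge is in the soft reduction. The paper does not identify an extremal test function; it bounds $\OI((M,f);-\eta)$ by the boundary separation distance $\DS((M,f);\eta)$ (Lemma \ref{lem:observable inscribed radius is smaller than Dirichlet separation distance}), estimates the latter via Lemma \ref{lem:infinite dimensional key lemma}, and then converts the resulting bound $\mathcal{S}^{-1}_{K,\Lambda}(\eta)$ back into $\OI$ of the model spaces through Kazukawa's Proposition \ref{prop:Kazukawa}, which needs the model measure to have full support. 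Your observation that every admissible $\varphi$ satisfies $\varphi\leq\rho_{\bm}$, so that the supremum in Definition \ref{defi:observable inscribed radius} is attained at $\rho_{\bm}$ and
\begin{equation*}
\OI((M,f);-\eta)=\inf\left\{\,r\geq 0 \mid m_{M,f}(B_{r}(\bm))\geq 1-\eta \,\right\},
\end{equation*}
short-circuits all of this: it dispenses with $\DS$ and with Proposition \ref{prop:Kazukawa}, and reduces both sides of the theorem to quantiles of the respective boundary-distance distributions, after which the single mass inequality finishes the proof. The price is that the reduction is special to the boundary setting (it exploits that all test functions are dominated by $\rho_{\bm}$), whereas the paper's $\DS$-machinery parallels the classical separation-distance toolkit and is reused elsewhere (e.g.\ in Section \ref{sec:Dirichlet eigenvalues}). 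Two small points to write out when formalizing: the right-continuity of $r\mapsto m_{I,\varphi}([0,r])$, so that the quantile infimum is attained on an interval $[0,r_{0}]$; and the finiteness of $\int_{0}^{\infty}e^{-\frac{K}{2}t^{2}-\Lambda t}\,dt$ in cases (1) and (2), which is what makes the normalization of the model measure, and hence the passage to $R=\infty$ in the ratio of integrals, legitimate. Also, case (2) should be handled by running the argument directly with $K=0$ (as you note, $G_{0,\Lambda}$ and $E_{\Lambda}$ have the same density) rather than by a limit $K\to 0$, since the hypothesis $\ric^{\infty}_{f,\bm^{\perp}}\geq K$ with $K>0$ is not available there.
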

To prove Theorem \ref{thm:infinite dimensional main theorem},
we develop comparison geometry of manifolds with boundary under $\ric^{\infty}_{f,\bm^{\perp}}\geq K$ and $H_{f,\bm}\geq \Lambda$ for $K,\Lambda\in \mathbb{R}$ (see Subsection \ref{sec:Relative volume comparisons}).
We will show a relative comparison theorem for metric neighborhoods of boundaries (see Theorem \ref{thm:relative volume comparison}).

Having Theorems \ref{thm:main theorem} and \ref{thm:infinite dimensional main theorem} at hand,
we will study various boundary concentration phenomena of sequences of metric measure spaces with boundary (see Section \ref{sec:Boundary concentration phenomena}).
For instance,
for a sequence of ball-model-spaces,
we conclude the following:
\begin{cor}\label{cor:ball model Levy family}
If $\kappa$ and $\lambda$ satisfy the convex-ball-condition,
then the sequence $\{\ball\}$ is a boundary concentration family.
\end{cor}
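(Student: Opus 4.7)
The plan is to compute $\OI(\ball;-\eta)$ directly and show that it tends to $0$ as $n\to\infty$. The decisive reduction is this: any admissible $1$-Lipschitz function $\varphi:\ball \to I$ with $\varphi|_{\partial \ball}=0$ is pointwise bounded by the distance-to-boundary function $\rho_{\partial \ball}$, so $\{\rho_{\partial \ball} \leq t\} \subset \{\varphi \leq t\}$ for every $t\geq 0$. Unwinding the definitions of $\PI$ on the screen $I=[0,\infty)$ and of $\OI$, the supremum in Definition \ref{defi:observable inscribed radius} is realized by $\varphi=\rho_{\partial \ball}$, giving
\begin{equation*}
\OI(\ball;-\eta)=\PI(I_{\rho_{\partial \ball}};1-\eta)=\inf\left\{\,t\geq 0 \,\bigg|\, m_{\ball}\left(\{\rho_{\partial \ball}\leq t\}\right) \geq 1-\eta\,\right\}.
\end{equation*}

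Next, I would introduce geodesic polar coordinates centered at the center of $\ball$. Setting $R:=\const$ and letting $s_\kappa$ denote the standard solution of $y''+\kappa y=0$ with $y(0)=0,\,y'(0)=1$, the Riemannian volume density is proportional to $s_\kappa(u)^{n-1}$, and $\rho_{\partial \ball}(x)=R-d_{\ball}(x,\mathrm{center})$; hence for $t\in[0,R]$,
\begin{equation*}
m_{\ball}\left(\{\rho_{\partial \ball}\leq t\}\right)=\frac{\int_{R-t}^{R}s_\kappa(u)^{n-1}\,du}{\int_{0}^{R}s_\kappa(u)^{n-1}\,du}.
\end{equation*}
The structural key is that the convex-ball-condition forces $s_\kappa$ to be \emph{strictly increasing} on $[0,R]$: this is automatic when $\kappa\leq 0$, and for $\kappa>0$ the defining identity $\sqrt{\kappa}\cot(\sqrt{\kappa}R)=\lambda\geq 0$ forces $R\leq \pi/(2\sqrt{\kappa})$, which is precisely the monotone regime of $s_\kappa$.

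Given this monotonicity, an elementary Laplace-type estimate, bounding the numerator above by $(R-t)\,s_\kappa(R-t)^{n-1}$ and the denominator below by $(t/2)\,s_\kappa(R-t/2)^{n-1}$, yields, for every fixed $t\in(0,R)$,
\begin{equation*}
1-m_{\ball}\left(\{\rho_{\partial \ball}\leq t\}\right) \leq \frac{2(R-t)}{t}\left(\frac{s_\kappa(R-t)}{s_\kappa(R-t/2)}\right)^{n-1},
\end{equation*}
with the ratio in parentheses strictly less than $1$. Consequently $m_{\ball}(\{\rho_{\partial \ball}\leq t\})\to 1$ as $n\to\infty$ for every $t>0$, hence $\PI(I_{\rho_{\partial \ball}};1-\eta)\to 0$, and therefore $\OI(\ball;-\eta)\to 0$ for every $\eta>0$, proving that $\{\ball\}$ is a boundary concentration family. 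The main obstacle is just the case analysis needed to verify strict monotonicity of $s_\kappa$ on $[0,R]$ under each branch of the convex-ball-condition; once that is in hand, the remainder is a standard Laplace-method concentration estimate.
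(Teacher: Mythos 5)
Your proof is correct, but it takes a genuinely different and more self-contained route than the paper's. The paper splits into the three branches of the convex-ball-condition and deduces the corollary from the comparison theorem (Theorem \ref{thm:main theorem}) combined with the exact evaluation $\OI(\ball;-\eta)=v^{-1}_{n,\kappa,\lambda}(\eta)$ of Lemma \ref{lem:observable and separation on model} (which rests on Proposition \ref{prop:Kazukawa}) and the limit computations of Theorems \ref{thm:sequence of hemisphere}--\ref{thm:warped product model Levy family}. You instead observe that every admissible $\varphi$ is dominated pointwise by $\rho_{\partial X}$, so that $\OI(X;-\eta)=\PI(I_{\rho_{\partial X}};1-\eta)$ for \emph{any} metric measure space with boundary --- a clean identity that short-circuits the separation-distance machinery of Lemmas \ref{lem:observable inscribed radius is smaller than Dirichlet separation distance}, \ref{lem:Dirichlet separation distance is smaller than observable inscribed radius} and Proposition \ref{prop:Kazukawa} --- and then run a single Laplace-type estimate on the radial density $s_{\kappa}(u)^{n-1}$, with the convex-ball-condition entering exactly where it must: to guarantee monotonicity of $s_{\kappa}$ on $[0,\const]$ (for $\kappa>0$ via $\sqrt{\kappa}\cot(\sqrt{\kappa}\,\const)=\lambda\geq 0$, i.e.\ $\const\leq\pi/(2\sqrt{\kappa})$). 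All steps check out, including the quantile reformulation of $\PI$ on the screen and the two elementary bounds on numerator and denominator; note also that $\const$ and $s_{\kappa}$ do not depend on $n$, so the ratio raised to the power $n-1$ indeed forces the tail to zero. What you lose relative to the paper is only the sharp asymptotics: the paper's route identifies the exact value $v^{-1}_{n,\kappa,\lambda}(\eta)$ and its limit (a half-Gaussian or exponential quantile), whereas your bound $\frac{2(\const-t)}{t}\bigl(s_{\kappa}(\const-t)/s_{\kappa}(\const-t/2)\bigr)^{n-1}$ only shows convergence to zero --- which is all the corollary requires.
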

\begin{rem}
In the case where $\kappa>0$ and $\lambda<0$,
the sequence $\{\ball\}$ is not a boundary concentration family.
Indeed,
for $r\in (0,\const-C_{\kappa,0})$,
if we define $\Omega_{n} \subset \ball$ as the $r/2$-neighborhood of the metric sphere with same center as $\ball$ and radius $C_{\kappa,0}$,
then $\liminf_{n\to \infty} m_{\ball}(\Omega_{n})>0$ and $\lim_{n\to \infty} d_{\ball}(\Omega_{n},\partial \ball)>0$ (see Remark \ref{rem:characterization of Dirichlet Levy} and Proposition \ref{prop:characterization of Dirichlet Levy}).
\end{rem}

%%%%%%%%%%%%%%%%%%%%%%%%%%%%%%%%
\subsection{Organization}
In Section \ref{sec:Preliminaries},
we review basics of weighted Riemannian manifolds with boundary,
and examine their geometric and analytic properties.
In Section \ref{sec:Invariants},
we introduce some invariants on metric measure spaces with boundary,
and investigate their fundamental properties.
In Section \ref{sec:Dirichlet eigenvalues},
we prove Theorems \ref{thm:Dirichlet eigenvalue and Dirichlet Levy family} and \ref{thm:higher order Dirichlet eigenvalue and Dirichlet Levy family}.
In Section \ref{sec:Comparisons},
we prove Theorem \ref{thm:main theorem}.
In Section \ref{sec:Infinite dimensional comparisons},
we prove Theorem \ref{thm:infinite dimensional main theorem}.

Section \ref{sec:Boundary concentration phenomena} is devoted to the collection of boundary concentration phenomena of sequences of metric measure spaces with boundary.
We will determine the critical scale orders of some sequences of finite dimensional model spaces (see Theorems \ref{thm:sequence of hemisphere}, \ref{thm:sequence of Euclidean ball}, \ref{thm:warped product model Levy family}).
We also prove Corollary \ref{cor:ball model Levy family}.
Furthermore,
we construct several non-trivial examples of boundary concentration families (see Examples \ref{ex:finite dimensional example} and \ref{ex:one dimensional example}).

%%%%%%%%%%%%%%%%%%%%%%%%%%%%%%%%
\subsection*{{\rm Acknowledgements}}
The author expresses his gratitude to Ryunosuke Ozawa and Kohei Suzuki for fruitful discussions and encouragement.
The author would like to thank Shin-ichi Ohta for his useful comments.
One of his comment leads to the study of Section \ref{sec:Infinite dimensional comparisons}.
The author would also like to thank Daisuke Kazukawa for informing him of Proposition \ref{prop:Kazukawa} with its proof.
The author is deeply grateful to Kei Funano for his interests in this paper,
valuable comments,
and informing him of \cite{CGY}.

\section{Preliminaries}\label{sec:Preliminaries}
Throughout this section,
let $(M,d_{M},m_{M,f})$ denote an $n$-dimensional weighted Riemannian manifold with boundary defined as (\ref{eq:weighted volume measure}). 

%%%%%%%%%%%%%%%%%%%%%%%%
\subsection{Curvatures}\label{sec:Curvatures}
We denote by $\ric_{g}$ the Ricci curvature on $M$ determined by the Riemannian metric $g$,
and by $\ric_{M}$ the infimum of $\ric_{g}$ on the unit tangent bundle over $M$.
For $N\in (-\infty,\infty]$,
the \textit{$N$-weighted Ricci curvature} $\ric^{N}_{f}$ is defined as follows:
If $N \in (-\infty,\infty)\setminus \{n\}$,
then
\begin{equation*}
\ric^{N}_{f}:=\ric_{g}+\Hess f-\frac{d f \otimes d f}{N-n},
\end{equation*}
where $d f$ and $\Hess f$ are the differential and the Hessian of $f$,
respectively;
otherwise,
if $N=\infty$,
then $\ric^{N}_{f}:=\ric_{g}+\Hess f$;
if $N=n$,
and if $f$ is constant,
then $\ric^{N}_{f}:=\ric_{g}$;
if $N=n$,
and if $f$ is not constant,
then $\ric^{N}_{f}:=-\infty$ (\cite{BE}, \cite{L}).
For a function $\mathcal{F}:M\to \mathbb{R}$,
we mean by $\ric^{N}_{f,M}\geq \mathcal{F}$ for every $x \in M$,
and for every unit tangent vector $v$ at $x$
it holds that $\ric^{N}_{f}(v)\geq \mathcal{F}(x)$.

\begin{rem}\label{rem:monotonicity of weighted Ricci curvature}
Traditionally,
$N$ has been chosen from $[n,\infty]$ (see e.g., \cite{Lo}, \cite{Q}, \cite{WW}).
On the other hand,
recently,
various properties have begun to be studied in the complemental case of $N \in (-\infty,n)$ (see e.g., \cite{K}, \cite{KM}, \cite{Mi4}, \cite{O1}, \cite{O2}, \cite{OT1}, \cite{OT2}, \cite{W}, \cite{WY}).
We notice the monotonicity of $\ric^{N}_{f}$ with respect to $N$:
If $N_{1},\,N_{2} \in [n,\infty]$ with $N_{1}\leq N_{2}$,
then $\ric^{N_{1}}_{f} \leq \ric^{N_{2}}_{f}$;
if $N_{1} \in [n,\infty]$ and $N_{2} \in (-\infty,n)$,
then $\ric^{N_{1}}_{f} \leq \ric^{N_{2}}_{f}$;
if $N_{1},\,N_{2} \in (-\infty,n)$ with $N_{1}\leq N_{2}$,
then $\ric^{N_{1}}_{f} \leq \ric^{N_{2}}_{f}$.
\end{rem}

For $z\in \bm$,
let $u_{z}$ denote the unit inner normal vector for $\bm$ at $z$,
and let $\gamma_{z}:[0,T)\to M$ denote the geodesic with $\gamma_{z}'(0)=u_{z}$.
We put
\begin{equation*}
\tau(z):=\sup \{\,t>0 \mid \rho_{\bm}(\gamma_{z}(t))=t\,\}.
\end{equation*}
Let $\ric_{\bm^{\perp}}$ be the \textit{infimum of the Ricci curvature in the $\bm$-radial direction on $M$} defined as $\inf_{z,t} \ric_{g}(\gamma'_{z}(t))$,
where the infimum is take over all $z\in \bm,\,t \in (0,\tau(z))$.
For $\mathcal{F}:M\to \mathbb{R}$,
we mean by $\ric^{N}_{f,\bm^{\perp}} \geq \mathcal{F}$ for all $z \in \bm,\,t \in (0,\tau(z))$
we have $\ric^{N}_{f}(\gamma'_{z}(t))\geq \mathcal{F}(\gamma_{z}(t))$.

For vector fields $v_{1},\,v_{2}$ on $\bm$,
the second fundamental form $S(v_{1},v_{2})$ is defined as the normal component of $\nabla^{g}_{v_{1}}v_{2}$ with respect to $\bm$,
where $\nabla^{g}$ denotes the Levi-Civita connection induced from $g$.
For $z\in \bm$,
let $T_{z}\bm$ denote the tangent space at $z$ on $\bm$.
The shape operator $A_{u_{z}}:T_{z}\bm \to T_{z}\bm$ for $u_{z}$ is defined as
\begin{equation*}
g(A_{u_{z}}v_{1},v_{2}):=g(S(v_{1},v_{2}),u_{z}).
\end{equation*}
The mean curvature $H_{z}$ at $z$ is defined as the trace of $A_{u_{z}}$.
Put $H_{\bm}:=\inf_{z\in \bm} H_{z}$.
The \textit{weighted mean curvature} $H_{f,z}$ at $z$ is defined by
\begin{equation*}
H_{f,z}:=H_{z}+g((\nabla f)_{z},u_{z}).
\end{equation*}
For a function $\mathcal{G}:\bm\to \mathbb{R}$,
we mean by $H_{f,\bm}\geq \mathcal{G}$
we have $H_{f,z} \geq \mathcal{G}(z)$ for all $z\in \bm$.

We mainly study the following three curvature conditions:
For $\kappa,\lambda \in \mathbb{R}$ and $K,\Lambda \in \mathbb{R}$,
\begin{align}\label{eq:finite dimensional curvature bound}
N \in [n,\infty),\,\, &\ric^{N}_{f,\bm^{\perp}}\geq (N-1)\kappa,\,\, H_{f,\bm}\geq (N-1)\lambda;\\ \label{eq:infinite dimensional curvature bound}
N=\infty,\,\,&\ric^{\infty}_{f,\bm^{\perp}}\geq K,\, H_{f,\bm}\geq \Lambda; \\ \label{eq:one dimensional curvature bound}
N=1,\,\,&\ric^{1}_{f,\bm^{\perp}}\geq (n-1)\kappa\, e^{\frac{-4f}{n-1}},\,\, H_{f,\bm}\geq (n-1)\lambda\, e^{\frac{-2f}{n-1}}.
\end{align}

\begin{rem}
We give a historical comment for the curvature condition (\ref{eq:one dimensional curvature bound}).
First,
Wylie \cite{W} has obtained a splitting theorem of Cheeger-Gromoll type under $\ric^{N}_{f,M}\geq 0$ for $N\in (-\infty,1]$.
After that
Wylie and Yeroshkin \cite{WY} have introduced the condition $\ric^{1}_{f,M}\geq (n-1)\kappa\, e^{\frac{-4f}{n-1}}$ from the view point of study of affine connections,
and established comparison geometry.
Furthermore,
the author \cite{S2} has studied comparison geometry of manifolds with boundary under the curvature condition $\ric^{N}_{f,M}\geq (n-1)\kappa\, e^{\frac{-4f}{n-1}}$ and $H_{f,\bm}\geq (n-1)\lambda\, e^{\frac{-2f}{n-1}}$ for $N\in (-\infty,1]$.
\end{rem}

%%%%%%%%%%%%%%%%%%%%%%%%
\subsection{Laplacians and Dirichlet eigenvalues}
For the weighted Laplacian $\Delta_{f}$ defined as (\ref{eq:weighted Laplacian}),
the following formula of Bochner type is well-known (see e.g., Chapter 14 in \cite{V}):
For every smooth $\psi:M\to \mathbb{R}$,
\begin{equation}\label{eq:Bochner formula}
-\frac{1}{2}\,\Delta_{f}\,\Vert \nabla \psi \Vert^{2}=\ric^{\infty}_{f}(\nabla \psi)+\Vert \Hess \psi \Vert^{2}_{\HS}-g\left(\nabla \Delta_{f}\,\psi,\nabla \psi \right),
\end{equation}
where $\Vert \cdot \Vert$ and $\Vert \cdot \Vert_{\HS}$ are the canonical norm and the Hilbert-Schmidt norm induced from $g$,
respectively.

For $z\in \bm$,
the value $\Delta_{f}\rho_{\bm}(\gamma_{z}(t))$ converges to $H_{f,z}$ as $t\to 0$.
For $t \in (0,\tau(z))$,
and for the volume element $\theta(t,z)$ of the $t$-level surface of $\rho_{\bm}$ at $\gamma_{z}(t)$,
we set
\begin{equation}\label{eq:volume element}
\theta_{f}(t,z):=e^{-f(\gamma_{z}(t))}\, \theta(t,z).
\end{equation}
For all $t\in (0,\tau(z))$
it holds that
\begin{equation}\label{eq:Laplacian representation}
\Delta_{f}\, \rho_{\bm}(\gamma_{z}(t)) =-\frac{\theta_{f}'(t,z)}{\theta_{f}(t,z)}.
\end{equation}
We also have the following (see e.g., \cite{S1}):
If $\bm$ is compact,
then
\begin{equation}\label{eq:integration formula}
m_{M,f}( B_{r}(\bm))=\int_{\bm} \int^{r}_{0}\bar{\theta}_{f}(t,z)\,dt\,d\vol_{h}
\end{equation}
for all $r>0$,
where $\vol_{h}$ is the Riemannian volume measure on $\bm$ induced from $h$,
and $\bar{\theta}_{f}:[0,\infty) \times \bm \to \mathbb{R}$ is a function defined as
\begin{equation}\label{eq:extended volume element}
  \bar{\theta}_{f}(t,z) := \begin{cases}
                                     \theta_{f}(t,z) & \text{if $t< \tau(z)$}, \\
                                             0           & \text{if $t \geq \tau(z)$}.
                                    \end{cases}
\end{equation}

Let $M$ be compact.
For $\phi \in H^{1}_{0}(M,m_{M,f})\setminus \{0\}$,
its Rayleigh quotient is defined as
\begin{equation}\label{eq:Rayleigh quotient}
R_{f}(\phi):=\frac{\int_{M}\, \Vert \nabla \phi \Vert^{2}\,d\,m_{M,f}}{\int_{M}\, \phi^{2}\,d\,m_{M,f}}.
\end{equation}
For the $k$-th Dirichlet eigenvalue $\nu_{f,k}(M)$ of the weighted Laplacian $\Delta_{f}$ defined as $(\ref{eq:notation of Dirichlet eigenvalue})$,
the min-max principle states
\begin{equation}\label{eq:min-max principle}
\nu_{f,k}(M)=\inf_{L} \sup_{\phi \in L\setminus \{0\}} R_{f}(\phi),
\end{equation}
where the infimum is taken over all $k$-dimensional subspaces $L$ of the Sobolev space $H^{1}_{0}(M,m_{M,f})$.

%%%%%%%%%%%%%%%%%%%
\subsection{Dirichlet isoperimetric constants}\label{sec:Dirichlet isoperimetric constants}
For a Borel subset $\Omega \subset M$,
\begin{equation*}
m^{+}_{M,f}(\Omega):=\liminf_{r\to 0}\frac{m_{M,f}(U_{r}(\Omega))-m_{M,f}(\Omega)}{r},
\end{equation*}
where $U_{r}(\Omega)$ is the open $r$-neighborhood of $\Omega$.
The \textit{Dirichlet isoperimetric constant} is defined as
\begin{equation*}
\mathcal{I}_{f}(M):=\inf_{\Omega} \frac{m^{+}_{M,f}(\Omega)}{m_{M,f}(\Omega)},
\end{equation*}
where the infimum is taken over all Borel subsets $\Omega \subset \inte M$ (cf. Definition 9.1 in \cite{L}).
The following inequality of Cheeger type is well-known (see \cite{C}, and cf. Corollary 9.7 in \cite{L}):
If $M$ is compact,
then
\begin{equation}\label{eq:Dirichlet Cheeger inequality}
\mathcal{I}_{f}(M) \leq 2 \sqrt{\nu_{f,1}(M)}.
\end{equation}

In the graph setting,
Kwak, Lau, Lee, Oveis Gharan and Trevisan \cite{KLLOT} have established an improved Cheeger inequality in terms of the smallest and higher eigenvalues of the Laplacian and the conductance (see Theorem 1.1 in \cite{KLLOT}).
In the manifold setting,
to answer a question of Funano \cite{F},
Liu \cite{Liu} has pointed out that
a similar improved Cheeger inequality holds for closed eigenvalues of the Laplacian and the Cheeger constant via the same argument as in \cite{KLLOT} (see Theorem 1.6 in \cite{Liu}).

Now,
we further point out that
the following improvement of (\ref{eq:Dirichlet Cheeger inequality}) holds in our setting via the same argument as in \cite{KLLOT}:
\begin{thm}[\cite{KLLOT}, \cite{Liu}]\label{thm:higer order eigenvalue and isoperimetric}
Let $M$ be compact.
Then for all $k\geq 1$,
\begin{equation}\label{eq:higer order eigenvalue and isoperimetric}
\mathcal{I}_{f}(M) \leq 8\sqrt{2}k\frac{\nu_{f,1}(M)}{\sqrt{\nu_{f,k}(M)}}.
\end{equation}
\end{thm}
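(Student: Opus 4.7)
The plan is to adapt the spectral-partitioning argument of Kwak, Lau, Lee, Oveis Gharan and Trevisan \cite{KLLOT}, which Liu \cite{Liu} already transplanted to the closed-manifold setting, to the weighted-Dirichlet situation here. The only modifications needed are notational: the first Dirichlet eigenfunction $\phi_{1}$ of $\Delta_{f}$ is strictly positive on $\inte M$ and vanishes on $\bm$, so every super-level set $\{\phi_{1}^{2}>t\}$ lies in $\inte M$ and is admissible in the definition of $\mathcal{I}_{f}(M)$, while any function built by truncating $\phi_{1}$ away from its zero set stays in $H^{1}_{0}(M,m_{M,f})$.

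Normalize $\phi_{1}$ so that $\int_{M}\phi_{1}^{2}\,dm_{M,f}=1$, and set $u:=\phi_{1}^{2}$. The co-area formula together with Cauchy-Schwarz gives
\begin{equation*}
\int_{M}\Vert\nabla u\Vert\,dm_{M,f}=2\int_{M}|\phi_{1}|\,\Vert\nabla\phi_{1}\Vert\,dm_{M,f}\leq 2\sqrt{\nu_{f,1}(M)},
\end{equation*}
and slicing into super-level sets $\Omega_{t}:=\{u>t\}$ already recovers the classical estimate (\ref{eq:Dirichlet Cheeger inequality}) by averaging. The improvement must come from extracting extra information from $\nu_{f,k}(M)$.

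The core dichotomy of \cite{KLLOT} now enters. Either some level $t$ satisfies $m^{+}_{M,f}(\Omega_{t})/m_{M,f}(\Omega_{t})\leq 8\sqrt{2}\,k\,\nu_{f,1}(M)/\sqrt{\nu_{f,k}(M)}$, in which case we are done; or no such $t$ exists. In the latter case one selects $2k$ threshold values $0<a_{1}<\cdots<a_{2k}$ so that the truncations $\psi_{i}:=\min\{(\phi_{1}-a_{i})_{+},\,a_{i+1}-a_{i}\}$ have pairwise disjoint supports contained in $\inte M$. A Chebyshev-type estimate on the distribution of $u$ bounds each $\int_{M}\psi_{i}^{2}\,dm_{M,f}$ from below, while the failure of the dichotomy together with the co-area bound above forces $R_{f}(\psi_{i})<\nu_{f,k}(M)$ for at least $k$ of the indices $i$. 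Since these $\psi_{i}$ are disjointly supported and lie in $H^{1}_{0}(M,m_{M,f})$, they span a $k$-dimensional subspace on which every Rayleigh quotient is strictly less than $\nu_{f,k}(M)$, contradicting the min-max principle (\ref{eq:min-max principle}). Tracking the numerical constants through this argument, exactly as in \cite{KLLOT, Liu}, produces the factor $8\sqrt{2}\,k$.

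The main obstacle is precisely this threshold-selection step producing the $\psi_{i}$: the choice of $\{a_{i}\}$ must simultaneously preserve enough $L^{2}$-mass in each window, keep the Dirichlet energies small, and yield $k$ rather than a single test function. In \cite{KLLOT} this is achieved by an averaging argument over a carefully designed family of thresholds, and Liu \cite{Liu} verified that the graph-theoretic argument passes verbatim to the manifold setting once edge summations are replaced by the co-area formula. The Dirichlet boundary condition introduces no new difficulty because each $\psi_{i}$ vanishes on $\bm$, either by inheriting $\phi_{1}|_{\bm}=0$ when $a_{i}=0$ or by being cut away from $\bm$ when $a_{i}>0$.
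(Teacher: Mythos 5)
Your proposal follows essentially the same route as the paper, which likewise reduces the statement to the argument of Lemmas 3.2 and 3.3 in \cite{Liu} (transplanted from \cite{KLLOT}) applied to a sign-definite first Dirichlet eigenfunction, observing that the resulting $2k$ disjointly supported Lipschitz test functions vanish on $\bm$ and hence lie in $H^{1}_{0}(M,m_{M,f})$. The only quibble is that your explicit formula $\psi_{i}=\min\{(\phi_{1}-a_{i})_{+},\,a_{i+1}-a_{i}\}$ produces nested rather than pairwise disjoint supports; the correct construction localizes each test function to a band $\{a_{i}<\phi_{1}<a_{i+1}\}$, as in \cite{Liu}, to which both you and the paper ultimately defer for the threshold selection and the constant $8\sqrt{2}\,k$.
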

One can verify (\ref{eq:higer order eigenvalue and isoperimetric}) by applying the same argument as in the proof of Lemmas 3.2 and 3.3 in \cite{Liu} (replace the role of the $k$-th closed eigenvalue with that of $\nu_{f,k}(M)$) to a non-negative eigenfunction of $\nu_{f,1}(M)$,
here we recall that any eigenfunctions of $\nu_{f,1}(M)$ are either always positive or always negative on $\inte M$.
Note that
for such an eigenfunction,
the $2k$ disjointly supported Lipschitz functions constructed in the proof of Lemma 3.3 in \cite{Liu} also satisfy the Dirichlet boundary condition.

%%%%%%%%%%%%%%%%%%%
\subsection{Dirichlet eigenvalue estimates}\label{sec:Isoperimetric inequalities}
Wang \cite{Wa} has produced a gradient estimate of Bakry-Ledoux type for the Dirichlet heat semigroup associated with the weighted Laplacian under a lower (unweighted) Ricci curvature bound,
a lower (unweighted) mean curvature bound for the boundary,
and a density bound (see Theorem 1.1 in \cite{Wa}, and also \cite{BL}).
From the same argument as in the proof of Theorem 1.1 in \cite{Wa},
we can derive the following (cf. (1.10) in \cite{Wa} for unweighted case):
\begin{thm}[\cite{Wa}]\label{thm:gradient estimate}
If $\ric^{\infty}_{f,M}\geq 0$ and $H_{f,\bm}\geq 0$,
then we have
\begin{equation*}
\Vert \Vert \nabla P_{t}\psi \Vert \Vert_{L^{\infty}}\leq \frac{\sqrt{1+2^{1/3}}\,(1+4^{2/3})}{2\sqrt{\pi}} \frac{\Vert \psi \Vert_{L^{\infty}}}{\sqrt{t}}
\end{equation*}
for all $t>0$ and non-negative, bounded measurable functions $\psi$ on $M$,
where $P_{t}$ is the Dirichlet heat semigroup generated by $-\Delta_{f}$.
\end{thm}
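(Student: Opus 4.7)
The plan is to follow the Bakry--Ledoux semigroup interpolation, adapted to the Dirichlet setting as in Wang \cite{Wa}. I would fix $t>0$ and a nonnegative bounded measurable $\psi$ on $M$, and set $u_{s}:=P_{t-s}\psi$ for $s\in[0,t]$, so that $\partial_{s}u_{s}=\Delta_{f}u_{s}$ with $u_{s}|_{\bm}=0$. The goal is to produce a functional of $u_{s}$ that is monotone in $s$ under the hypotheses $\ric^{\infty}_{f,M}\geq 0$ and $H_{f,\bm}\geq 0$, and to read off a pointwise bound on $\Vert \nabla P_{t}\psi\Vert$ by comparing its values at $s=0$ and $s=t$.

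Concretely, I would work with a functional of the form
\begin{equation*}
F(s):=P_{s}\bigl(a(s)\,u_{s}^{2}+b(s)\Vert\nabla u_{s}\Vert^{2}\bigr)
\end{equation*}
for suitable smooth weights $a,b$, and differentiate in $s$. Rewriting $\Delta_{f}\Vert \nabla u_{s}\Vert^{2}$ through the Bochner identity (\ref{eq:Bochner formula}) and using $\ric^{\infty}_{f}(\nabla u_{s})\geq 0$, together with Cauchy--Schwarz applied to the Hessian contribution, the interior integrand of $F'(s)$ becomes pointwise nonnegative provided $a$ and $b$ satisfy appropriate ODE relations (of the type $b'=-2a$ together with a nondegeneracy constraint and $b(t)=0$). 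Optimizing within this admissible family of weights is what ultimately produces the somewhat intricate numerical constant $\frac{\sqrt{1+2^{1/3}}\,(1+4^{2/3})}{2\sqrt{\pi}}$ appearing in the statement.

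The delicate step, and the main obstacle I anticipate, is the boundary contribution that appears in the integration by parts implicit in $\partial_{s}P_{s}(\,\cdot\,)$. A boundary integral along $\bm$ remains, involving the normal derivatives $g(\nabla u_{s},u_{z})$ and tangential Hessian components of $u_{s}$. Since $u_{s}|_{\bm}=0$, tangential derivatives of $u_{s}$ along $\bm$ vanish, and the identity
\begin{equation*}
\Hess u_{s}(v,v)\big|_{\bm}=-g(\nabla u_{s},u_{z})\,g(A_{u_{z}}v,v),\qquad v\in T_{z}\bm,
\end{equation*}
converts the tangential Hessian into a multiple of the shape operator. Tracing against the boundary metric, and absorbing the weighted contribution $g(\nabla f,u_{z})$, reorganizes the boundary integral as a nonnegative multiple of $\int_{\bm}H_{f,z}\,(g(\nabla u_{s},u_{z}))^{2}\,d\vol_{h}$, which is nonnegative by $H_{f,\bm}\geq 0$ and can therefore be discarded. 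The hard part here is the bookkeeping: checking that the weighted contribution combines with the trace of $A_{u_{z}}$ to give exactly $H_{f,z}$ rather than the unweighted $H_{z}$, and that the coefficient of $(g(\nabla u_{s},u_{z}))^{2}$ has the correct sign. With $F$ thus monotone in $s$, the inequality $F(0)\leq F(t)$, together with the terminal condition $b(t)=0$ and the pointwise estimate $P_{t}(\psi^{2})\leq \Vert \psi\Vert_{L^{\infty}}^{2}$, yields the stated $L^{\infty}$ gradient bound.
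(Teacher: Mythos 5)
Your proposal takes a genuinely different route from the paper's, and in the proposed form it has a gap. The paper does not prove this theorem by semigroup interpolation: it instructs the reader to rerun Wang's probabilistic proof of Theorem 1.1 in \cite{Wa}, a coupling-by-reflection argument for the diffusion generated by $-\Delta_{f}$. There, $\ric^{\infty}_{f,M}\geq 0$ enters through the Chen--Wang estimate (inequality (2.3) in \cite{CW}, replacing (2.5) in \cite{Wa}) on the distance process of the coupled pair, which controls the coupling time; and $H_{f,\bm}\geq 0$ enters through the Laplacian comparison $\Delta_{f}\rho_{\bm}\geq 0$ (Lemma 3.4 in \cite{S1}, i.e.\ Lemma \ref{lem:Laplacian comparison} with $K=\Lambda=0$, replacing Lemma 2.3 in \cite{Wa}), which compares the boundary-distance process $\rho_{\bm}(X_{s})$ with a one-dimensional reflected process and hence controls the Dirichlet exit times of the two coupled particles. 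The constant $\frac{\sqrt{1+2^{1/3}}\,(1+4^{2/3})}{2\sqrt{\pi}}$ comes from optimizing these probabilistic bounds over a splitting parameter, not from optimizing ODE weights.

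The gap in the interpolation you propose is the following. For the Dirichlet semigroup, the identity $\frac{d}{ds}P_{s}\Phi_{s}=P_{s}\left(\partial_{s}\Phi_{s}-\Delta_{f}\Phi_{s}\right)$ requires $\Phi_{s}|_{\bm}=0$, and your integrand $\Phi_{s}=a(s)u_{s}^{2}+b(s)\Vert\nabla u_{s}\Vert^{2}$ does not satisfy this, since $\Vert\nabla u_{s}\Vert^{2}|_{\bm}=\left(g(\nabla u_{s},u_{z})\right)^{2}$. Writing $P_{s}$ through the Dirichlet heat kernel $p_{s}(x,\cdot)$ and integrating by parts, the correction term is $\int_{\bm}\partial_{\nu}p_{s}(x,z)\,\Phi_{s}(z)\,e^{-f}\,d\vol_{h}$ with $\nu$ the outward normal; since $p_{s}(x,\cdot)\geq 0$ vanishes on $\bm$, we have $\partial_{\nu}p_{s}\leq 0$, so for $b(s)\geq 0$ this term has the wrong sign for $F'\geq 0$, and it involves the normal derivative of the heat kernel rather than the mean curvature. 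Your proposed use of $H_{f,\bm}\geq 0$ via $\Hess u_{s}(v,v)|_{\bm}=-g(\nabla u_{s},u_{z})\,g(A_{u_{z}}v,v)$ is the mechanism from the \emph{Neumann} gradient estimate; in the Dirichlet computation the boundary term $\partial_{\nu}\Vert\nabla u_{s}\Vert^{2}$ is multiplied by $p_{s}|_{\bm}=0$ and never appears. Moreover, no choice of weights rescues the argument as stated: the pointwise inequality it would output, $b(0)\Vert\nabla P_{t}\psi\Vert^{2}(x)\leq a(t)P_{t}(\psi^{2})(x)-a(0)(P_{t}\psi)^{2}(x)$, fails near $\bm$, as the half-line $[0,\infty)$ with Dirichlet condition at $0$ already shows: the right-hand side tends to $0$ as $x\to 0$ while $\Vert\nabla P_{t}\psi\Vert(x)$ tends to a generically nonzero normal derivative. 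This failure of any pointwise reverse Poincar\'e inequality at the boundary is precisely why the Dirichlet case is handled by coupling rather than by Bakry--Ledoux interpolation.
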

Wang \cite{Wa} has proved Theorem \ref{thm:gradient estimate} when $f=0$.
One can see Theorem \ref{thm:gradient estimate} only by using Lemma 3.4 in \cite{S1} (or Lemma \ref{lem:Laplacian comparison} below) instead of Lemma 2.3 in \cite{Wa},
and using the inequality (2.3) in \cite{CW} instead of (2.5) in \cite{Wa} along the line of the proof of Theorem 1.1 in \cite{Wa}.

In virtue of the gradient estimate,
Wang \cite{Wa} has obtained an isoperimetric inequality of Buser and Ledoux type based on the idea of Ledoux \cite{L1} (see Theorem 1.2 in \cite{Wa}, and also \cite{Bu}, \cite{L1}).
Theorem \ref{thm:gradient estimate} together with the same argument as in the proof of Theorem 1.2 in \cite{Wa} yields:
\begin{thm}[\cite{Wa}]\label{thm:Buser Ledoux}
Let $M$ be compact.
If $\ric^{\infty}_{f,M}\geq 0$ and $H_{f,\bm}\geq 0$,
then we have
\begin{equation}\label{eq:Buser Ledoux}
\mathcal{I}_{f}(M) \geq \frac{2\sqrt{\pi}}{\sqrt{1+2^{1/3}}\,(1+4^{2/3})}\,\sup_{t>0}\frac{1-e^{-t}}{\sqrt{t}}\, \sqrt{\nu_{f,1}(M)}.
\end{equation}
\end{thm}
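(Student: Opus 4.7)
The strategy is to combine the gradient estimate of Theorem~\ref{thm:gradient estimate} with a Ledoux-type heat-semigroup argument, following the scheme Wang \cite{Wa} employs in the unweighted case. The proof packages three ingredients: an integration-by-parts identity, an upper bound via the gradient estimate, and a lower bound via the Dirichlet spectral gap.

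Fix a Borel subset $\Omega \subset \inte M$ with $m_{M,f}(\Omega)>0$. For small $r>0$ with $U_{r}(\Omega) \subset \inte M$, consider the Lipschitz cut-off $\psi_{r}(x):=\max\{1 - d_{M}(x,\Omega)/r,\,0\}$. Then $\psi_{r}$ vanishes in a neighborhood of $\bm$, so $\psi_{r} \in H^{1}_{0}(M,m_{M,f})$; moreover $\psi_{r} \geq \mathbf{1}_{\Omega}$, $\Vert\psi_{r}\Vert_{L^{\infty}}=1$, and $\Vert\nabla\psi_{r}\Vert_{L^{1}} \leq (m_{M,f}(U_{r}(\Omega)) - m_{M,f}(\Omega))/r$, whose $\liminf_{r\to 0^{+}}$ is at most $m^{+}_{M,f}(\Omega)$ by the definition of the boundary measure.

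Let $P_{u}$ denote the Dirichlet heat semigroup generated by $-\Delta_{f}$. From the heat equation $\partial_{u}P_{u}\psi_{r}=-\Delta_{f}P_{u}\psi_{r}$ and integration by parts (permissible thanks to the Dirichlet condition), one obtains the identity
\begin{equation*}
\int_{M}\psi_{r}(\psi_{r}-P_{t}\psi_{r})\,dm_{M,f} = \int_{0}^{t}\int_{M} g(\nabla\psi_{r},\nabla P_{u}\psi_{r})\,dm_{M,f}\,du.
\end{equation*}
The right-hand side is estimated by H\"older's inequality and Theorem~\ref{thm:gradient estimate}: setting $C_{\ast}:=\sqrt{1+2^{1/3}}\,(1+4^{2/3})/(2\sqrt{\pi})$, we have $\Vert\nabla P_{u}\psi_{r}\Vert_{L^{\infty}} \leq C_{\ast}/\sqrt{u}$, so the double integral is bounded above by $2C_{\ast}\sqrt{t}\,\Vert\nabla\psi_{r}\Vert_{L^{1}}$. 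For the left-hand side, the self-adjointness of $P_{t/2}$ on $L^{2}(m_{M,f})$ gives $\int \psi_{r}P_{t}\psi_{r}\,dm_{M,f}=\Vert P_{t/2}\psi_{r}\Vert_{L^{2}}^{2}$, and the spectral decomposition of $\Delta_{f}$ into Dirichlet eigenfunctions together with the min-max principle~\eqref{eq:min-max principle} yields the contraction $\Vert P_{t/2}\psi_{r}\Vert_{L^{2}}^{2} \leq e^{-\nu_{f,1}(M)\,t}\Vert\psi_{r}\Vert_{L^{2}}^{2}$. Since $\Vert\psi_{r}\Vert_{L^{2}}^{2} \geq m_{M,f}(\Omega)$, the left-hand side is at least $(1-e^{-\nu_{f,1}(M)\,t})\,m_{M,f}(\Omega)$.

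Combining the two bounds, letting $r\to 0^{+}$, and substituting $s=\nu_{f,1}(M)\,t$, I would conclude
\begin{equation*}
\frac{m^{+}_{M,f}(\Omega)}{m_{M,f}(\Omega)} \geq \frac{\sqrt{\nu_{f,1}(M)}}{2C_{\ast}}\,\sup_{s>0}\frac{1-e^{-s}}{\sqrt{s}},
\end{equation*}
and the theorem follows on taking the infimum over $\Omega$. The main obstacle is matching the sharp coefficient $2\sqrt{\pi}/(\sqrt{1+2^{1/3}}\,(1+4^{2/3}))$ stated in the theorem: the naive H\"older estimate above contributes an extra factor of two via $\int_{0}^{t}u^{-1/2}\,du=2\sqrt{t}$, and recovering the sharp constant requires the refinement due to Wang, typically via the semigroup-symmetry identity $\int\psi_{r}\Delta_{f}P_{u}\psi_{r}\,dm_{M,f}=\Vert\nabla P_{u/2}\psi_{r}\Vert_{L^{2}}^{2}$ before applying the $L^{\infty}$ gradient estimate, together with a careful reindexing of the time integral.
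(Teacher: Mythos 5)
Your overall strategy is exactly the one the paper intends: its ``proof'' of Theorem~\ref{thm:Buser Ledoux} is a one-line reduction to the argument of Theorem 1.2 in \cite{Wa}, which is precisely the Ledoux semigroup method you reconstruct, and your three ingredients (the integrated heat-equation identity, the gradient estimate of Theorem~\ref{thm:gradient estimate}, and the Dirichlet spectral contraction $\Vert P_{t/2}\psi_{r}\Vert_{L^{2}}^{2}\leq e^{-\nu_{f,1}(M)t}\Vert\psi_{r}\Vert_{L^{2}}^{2}$) are all used correctly. The genuine gap is the one you flag but do not close: what your argument proves is
\begin{equation*}
\mathcal{I}_{f}(M)\geq\frac{1}{2C_{\ast}}\,\sup_{s>0}\frac{1-e^{-s}}{\sqrt{s}}\,\sqrt{\nu_{f,1}(M)}
=\frac{\sqrt{\pi}}{\sqrt{1+2^{1/3}}\,(1+4^{2/3})}\,\sup_{s>0}\frac{1-e^{-s}}{\sqrt{s}}\,\sqrt{\nu_{f,1}(M)},
\end{equation*}
which is half the constant asserted in (\ref{eq:Buser Ledoux}), the loss coming from $\int_{0}^{t}u^{-1/2}\,du=2\sqrt{t}$. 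Acknowledging a missing factor of two is not the same as supplying it, and the repair you sketch does not work as stated: replacing the pairing by $\int\psi_{r}\Delta_{f}P_{u}\psi_{r}\,dm_{M,f}=\Vert\nabla P_{u/2}\psi_{r}\Vert_{L^{2}}^{2}$ and then applying H\"older with the $L^{\infty}$ gradient bound at time $u/2$ costs $\int_{0}^{t}(u/2)^{-1/2}\,du=2\sqrt{2t}$, i.e.\ it makes the constant \emph{worse}, and it additionally requires an $L^{1}$-gradient contraction $\Vert\nabla P_{u/2}\psi_{r}\Vert_{L^{1}}\leq\Vert\nabla\psi_{r}\Vert_{L^{1}}$ that you would have to justify for the Dirichlet semigroup under the present curvature hypotheses. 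So the proposal establishes the theorem only with the constant halved; to obtain the stated constant you must actually reproduce the finer bookkeeping of Wang's proof of Theorem 1.2 rather than the crude H\"older pairing. For the only place the theorem is used downstream, Theorem~\ref{thm:universal inequality}, where the constant is an unspecified universal $C$, your weaker bound is entirely sufficient, but the statement as given is not proven.

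A secondary point: you quietly restrict to $\Omega$ with $U_{r}(\Omega)\subset\inte M$ for small $r$, i.e.\ $\overline{\Omega}\subset\inte M$ compactly. The infimum defining $\mathcal{I}_{f}(M)$ ranges over all Borel $\Omega\subset\inte M$, including sets whose closure meets $\bm$, for which $\psi_{r}\notin H^{1}_{0}(M,m_{M,f})$ and your integration by parts acquires boundary terms. You should either exhaust such $\Omega$ from inside, or note explicitly that the definition must in any case be read as ranging over compactly included subsets (taken literally, $\Omega=\inte M$ gives $m^{+}_{M,f}(\Omega)=0$, which would make the inequality false); as it stands your argument does not cover all competitors in the infimum.
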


Combining (\ref{eq:higer order eigenvalue and isoperimetric}), (\ref{eq:Buser Ledoux}) implies the following universal inequality:
\begin{thm}\label{thm:universal inequality}
Let $M$ be compact.
If $\ric^{\infty}_{f,M}\geq 0$ and $H_{f,\bm}\geq 0$,
then there is a universal constant $C>0$ such that
for all $k\geq 1$
we have
\begin{equation}\label{eq:universal inequality}
\nu_{f,k}(M)\leq C\,k^{2}\,\nu_{f,1}(M).
\end{equation}
\end{thm}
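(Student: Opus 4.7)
The plan is essentially to chain together the two inequalities from the previous subsections: the improved Cheeger inequality (\ref{eq:higer order eigenvalue and isoperimetric}) and the Buser-Ledoux type inequality (\ref{eq:Buser Ledoux}). Both of these have already been established under our hypotheses $\ric^{\infty}_{f,M}\geq 0$ and $H_{f,\bm}\geq 0$, so the work reduces to simple algebra.

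First I would note that by Theorem \ref{thm:Buser Ledoux}, there is a positive constant $C_{1}$, depending only on $\sup_{t>0}(1-e^{-t})/\sqrt{t}$ and the explicit numerical factors, such that
\begin{equation*}
\sqrt{\nu_{f,1}(M)} \leq C_{1}\,\mathcal{I}_{f}(M).
\end{equation*}
Next, I would invoke Theorem \ref{thm:higer order eigenvalue and isoperimetric} to bound $\mathcal{I}_{f}(M)$ from above by $8\sqrt{2}\,k\,\nu_{f,1}(M)/\sqrt{\nu_{f,k}(M)}$. Substituting this into the previous inequality yields
\begin{equation*}
\sqrt{\nu_{f,1}(M)} \leq 8\sqrt{2}\,C_{1}\,k\,\frac{\nu_{f,1}(M)}{\sqrt{\nu_{f,k}(M)}}.
\end{equation*}
Rearranging (using that $\nu_{f,1}(M)>0$), we find $\sqrt{\nu_{f,k}(M)} \leq 8\sqrt{2}\,C_{1}\,k\,\sqrt{\nu_{f,1}(M)}$, and squaring produces the desired bound with the universal constant $C:=128\,C_{1}^{2}$.

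Since both input inequalities hold under the stated curvature and mean curvature assumptions, and neither the constants in (\ref{eq:higer order eigenvalue and isoperimetric}) nor those in (\ref{eq:Buser Ledoux}) depend on $M$ or $f$, the resulting $C$ is indeed universal. There is essentially no obstacle here, as the heavy lifting has already been done in Theorems \ref{thm:higer order eigenvalue and isoperimetric} and \ref{thm:Buser Ledoux}; the only minor point to verify is that $\sup_{t>0}(1-e^{-t})/\sqrt{t}$ is a strictly positive finite number (it is attained at the unique positive root of $2(1-e^{-t})=t\,e^{-t}$, and is a concrete positive constant), so that $C_{1}$ is a well-defined finite positive number.
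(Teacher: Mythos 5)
Your proposal is correct and is exactly the paper's argument: the theorem is obtained by chaining the improved Cheeger inequality (\ref{eq:higer order eigenvalue and isoperimetric}) with the Buser--Ledoux type bound (\ref{eq:Buser Ledoux}) and doing the elementary algebra, yielding $C=128\,C_{1}^{2}$ with $C_{1}$ the explicit numerical constant from Theorem \ref{thm:Buser Ledoux}. Nothing is missing.
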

Funano and Shioya \cite{FSh}, Funano \cite{F}, Liu \cite{Liu} have formulated similar inequalities for closed manifolds of non-negative $\infty$-weighted Ricci curvature (see Theorem 1.1 in \cite{FSh}, Theorem 1.2 in \cite{F} and Theorem 1.1 in \cite{Liu}).
The inequality (\ref{eq:universal inequality}) corresponds to that of Liu \cite{Liu}.

\begin{rem}\label{rem:Li-Yau dimension free}
Under similar setting $\ric^{\infty}_{f,\bm^{\perp}}\geq 0$ and $H_{f,\bm}\geq 0$ to that in Theorem \ref{thm:universal inequality},
the author \cite{S1} has shown a dimension free inequality
\begin{equation}\label{eq:Li-Yau dimension free}
\nu_{f,1}(M)\geq \pi^{2} (2\IR M)^{-2}
\end{equation}
of Li-Yau, Kasue type,
and a rigidity result for the equality case (see Corollary 7.6 in \cite{S1}, and also \cite{LY}, \cite{K2}, and cf. Remarks \ref{rem:remark on the curvature assumption} and \ref{rem:conjecture}).
\end{rem}

\begin{rem}\label{rem:dimension dependent inequality}
Let $M$ be compact,
and let $\nu_{k}(M)$ be the $k$-th Dirichlet eigenvalue of the Laplacian $\Delta$.
Let us mention a dimension dependent estimate of the ratio $\nu_{k}(M)/\nu_{1}(M)$ induced from a classical method by Cheng \cite{Ch}, Li-Yau \cite{LY}.
We possess the following estimate by modifying the proof of Corollary 2.2 in \cite{Ch} (take a unit speed minimal geodesic $\gamma:[0,\IR M]\to M$ with $\gamma((0,\IR M])\subset \inte M$ that is orthogonal to $\bm$ at $\gamma(0)$,
set $k$ disjoint open balls in $\inte M$ centered at $\gamma((2\alpha-1)(2k)^{-1} \IR M)$ with radius $(2k)^{-1} \IR M$ for $\alpha=1,\dots,k$,
and apply the argument of proof of Theorem 2.1 in \cite{Ch}):
If $\ric_{M}\geq 0$,
then
\begin{equation}\label{eq:Cheng dimension dependent}
\nu_{k}(M)\leq 2n(n+4)k^{2} (\IR M)^{-2}
\end{equation}
for all $k\geq 1$.
By (\ref{eq:Li-Yau dimension free}), (\ref{eq:Cheng dimension dependent}),
we obtain the following:
If $\ric_{M}\geq 0$ and $H_{\bm}\geq 0$,
then there is $C_{n}>0$ depending only on $n$ such that
\begin{equation}\label{eq:Li-Yau-Cheng dimension dependent inequality}
\nu_{k}(M)\leq C_{n}\,k^{2}\,\nu_{1}(M)
\end{equation}
for all $k\geq 1$.
Theorem \ref{thm:universal inequality} is a refinement of (\ref{eq:Li-Yau-Cheng dimension dependent inequality}) in the sense that
the upper bound of $\nu_{k}(M)/\nu_{1}(M)$ does not depend on $n$.
\end{rem}

%%%%%%%%%%%%%%%%%%%%%%%%
\subsection{Comparisons}\label{sec:Comparison theorems}
The author \cite{S1}, \cite{S2} has obtained inscribed radius comparison theorems,
and rigidity results for the equality case.
We first recall the following comparison (see Theorem 1.1 in \cite{S1}):
\begin{thm}[\cite{S1}]\label{thm:inscribed radius comparison}
For $N\in [n,\infty)$,
we assume $\ric^{N}_{f,\bm^{\perp}}\geq (N-1)\kappa$ and $H_{f,\bm}\geq (N-1)\lambda$.
If $\kappa$ and $\lambda$ satisfy the ball-condition,
then
\begin{equation*}
\IR M \leq \const.
\end{equation*}
\end{thm}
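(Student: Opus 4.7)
The plan is to use the standard Laplacian comparison for the distance function $\rho_{\bm}$ from the boundary along the normal exponential map. Fix $z \in \bm$, consider the geodesic $\gamma_{z}:[0,\tau(z))\to M$ with $\gamma_{z}'(0)=u_{z}$, and set
\[
y(t) := \theta_{f}(t,z)^{1/(N-1)}.
\]
Then $y(0)=1$, and by (\ref{eq:Laplacian representation}) together with $\Delta_{f}\rho_{\bm}(\gamma_{z}(t))\to H_{f,z}$ as $t\to 0$, the initial slope satisfies $y'(0)=-H_{f,z}/(N-1)\leq -\lambda$.

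The central step is to derive the second-order differential inequality
\[
y''(t)+\kappa\, y(t) \leq 0 \qquad \text{on } (0,\tau(z)).
\]
I would obtain this from the Bochner formula (\ref{eq:Bochner formula}) applied to $\psi=\rho_{\bm}$: since $\|\nabla \rho_{\bm}\|\equiv 1$ on the interior of the normal neighborhood, the left-hand side of (\ref{eq:Bochner formula}) vanishes along $\gamma_{z}$, yielding a first-order Riccati inequality for $\phi(t):=\Delta_{f}\rho_{\bm}(\gamma_{z}(t))$. The Cauchy-Schwarz bound $\|\Hess\rho_{\bm}\|_{\HS}^{2}\geq (\tr \Hess\rho_{\bm})^{2}/(n-1)$ on the $(n-1)$-dimensional tangent space of the level set, combined with the elementary identity $\frac{a^{2}}{n-1}+\frac{b^{2}}{N-n}\geq \frac{(a+b)^{2}}{N-1}$ applied to $a=\tr\Hess\rho_{\bm}$ and $b=df(\gamma_{z}')$, converts this into a Riccati inequality that is exactly the reformulation of $y''+\kappa y\leq 0$.

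Next, I would compare with the one-dimensional model. Let $s_{\kappa,\lambda}$ solve $s_{\kappa,\lambda}''+\kappa\,s_{\kappa,\lambda}=0$ with $s_{\kappa,\lambda}(0)=1$ and $s_{\kappa,\lambda}'(0)=-\lambda$; the ball-condition on $(\kappa,\lambda)$ is precisely the statement that $s_{\kappa,\lambda}$ admits a first positive zero at $t=\const$, and this $s_{\kappa,\lambda}$ is the radial volume element along a normal geodesic in $\ball$. By the Sturm comparison theorem, the initial data $y(0)=s_{\kappa,\lambda}(0)=1$, $y'(0)\leq s_{\kappa,\lambda}'(0)$, together with $y''+\kappa y\leq 0 = s_{\kappa,\lambda}''+\kappa s_{\kappa,\lambda}$, force $y\leq s_{\kappa,\lambda}$ on their common interval of positivity. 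Since $y\geq 0$ and $s_{\kappa,\lambda}(\const)=0$, the function $y$ must vanish at some $t_{0}\leq \const$; as $\theta_{f}(\cdot,z)$ is positive on $(0,\tau(z))$ by definition of the cut time, this forces $\tau(z)\leq \const$.

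Finally, every $x\in M$ with $\rho_{\bm}(x)>0$ lies on some normal geodesic from a nearest boundary point $z$ (using completeness of $M$), so $\rho_{\bm}(x)\leq \tau(z)\leq \const$; taking the supremum gives $\IR M\leq \const$. I expect the main obstacle to be the clean derivation of the inequality $y''+\kappa y\leq 0$ with sharp constants: the exponent $1/(N-1)$ rather than $1/(n-1)$ appears precisely because the weighted term $-(df\otimes df)/(N-n)$ in the definition of $\ric^{N}_{f}$ is exactly what absorbs the $f$-contribution from $\theta_{f}=e^{-f}\theta$ via the completion-of-the-square inequality above, and one must track the sign convention $\Delta=-\tr\Hess$ carefully throughout.
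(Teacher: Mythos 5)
Your argument is correct and is essentially the proof given in the cited source [S1] (and in Kasue's unweighted original): the paper itself only quotes this theorem, and Remark \ref{rem:remark on the curvature assumption} confirms that the proof there proceeds exactly via the Laplacian/Riccati comparison for $\rho_{\bm}$ followed by Sturm comparison with $s_{\kappa,\lambda}$, which is what you do. The only cosmetic points are that $\theta_{f}(0,z)=e^{-f(z)}$ rather than $1$, so one should take $y(t)=\bigl(\theta_{f}(t,z)/\theta_{f}(0,z)\bigr)^{1/(N-1)}$ (the logarithmic derivative, hence the whole argument, is unchanged), and that the Cauchy--Schwarz completion of the square requires $N>n$, the case $N=n$ being trivial since the hypothesis then forces $f$ to be constant.
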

\begin{rem}
Kasue \cite{K1} has proved Theorem \ref{thm:inscribed radius comparison},
and rigidity result for the equality case in the unweighted case where $f=0$ and $N=n$.
Li and Wei have done in \cite{LW2} when $\kappa=0$,
and in \cite{LW1} when $\kappa<0$.
\end{rem}

We also have the following comparison (see Theorem 6.3 in \cite{S2}):
\begin{thm}[\cite{S2}]\label{thm:twisted inscribed radius comparison}
Assume
$\ric^{1}_{f,\bm^{\perp}}\geq (n-1) \,\kappa \,e^{\frac{-4f}{n-1}}$ and $H_{f,\bm}\geq (n-1)\,\lambda\,e^{\frac{-2f}{n-1}}$.
Suppose additionally that
$f\leq (n-1)\delta$ for some $\delta \in \mathbb{R}$.
If $\kappa$ and $\lambda$ satisfy the ball-condition,
then we have
\begin{equation*}
\IR M \leq C_{\kappa\,e^{-4\delta},\lambda\,e^{-2\delta}}.
\end{equation*}
\end{thm}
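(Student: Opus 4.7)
The plan is to reduce to the unweighted inscribed radius comparison of Theorem \ref{thm:inscribed radius comparison} (applied with $N=n$) through a conformal reparametrization along $\bm$-normal geodesics, in the spirit of Wylie-Yeroshkin \cite{WY}.

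First, fix $z \in \bm$ and consider the unit-speed normal geodesic $\gamma_{z} : [0, \tau(z)) \to M$ with $\gamma_{z}'(0) = u_{z}$. Introduce the conformal parameter
\begin{equation*}
s(t) := \int_{0}^{t} e^{-\frac{2 f(\gamma_{z}(\tau))}{n-1}} \, d\tau,
\end{equation*}
and rescale the volume element $\theta_{f}(t,z)$ along $\gamma_{z}$ to a quantity $\tilde{\theta}(s,z)$ whose logarithmic $s$-derivative corresponds to the trace of a shape operator with respect to the conformally related metric $e^{-\frac{4f}{n-1}} g$ on the normal tube. The key identity, established in Section 6 of \cite{S2} following \cite{WY}, is that under $\ric^{1}_{f,\bm^{\perp}} \geq (n-1)\kappa\, e^{-4f/(n-1)}$ the Riccati inequality satisfied by $(\log \tilde{\theta})'$ in the variable $s$ coincides with the unweighted one for a Ricci lower bound $(n-1)\kappa$. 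Simultaneously, the boundary condition $H_{f,\bm} \geq (n-1)\lambda\, e^{-2f/(n-1)}$ transforms into the standard initial bound $(n-1)\lambda$ at $s = 0$.

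Applying the classical Riccati comparison that underlies Theorem \ref{thm:inscribed radius comparison} in the $s$-variable then forces $\tilde{\theta}(\cdot,z)$ to vanish by some $s \leq \const$; consequently $\gamma_{z}$ ceases to realize the distance to $\bm$ once $s(t) > \const$. The hypothesis $f \leq (n-1)\delta$ now plays its role: it gives $e^{-2f/(n-1)} \geq e^{-2\delta}$, hence $s(t) \geq e^{-2\delta}\, t$ for every $t \in [0,\tau(z))$, which yields
\begin{equation*}
\tau(z) \leq e^{2\delta}\, \const = C_{\kappa\,e^{-4\delta},\,\lambda\,e^{-2\delta}}.
\end{equation*}
The last equality is the standard scaling of the model-ball radius under the curvature rescaling $\kappa \mapsto \kappa/c^{2},\ \lambda \mapsto \lambda/c$ with $c = e^{2\delta}$. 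Taking the supremum over $z \in \bm$, and recalling that $\IR M = \sup_{z \in \bm} \tau(z)$, completes the proof.

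The main technical obstacle is the second step: verifying that the conformal reparametrization absorbs the exponential weights so that the Riccati comparison becomes precisely the unweighted one with curvature $(n-1)\kappa$. This is a delicate Bochner-type calculation, and it hinges crucially on the value $N = 1$ in $\ric^{1}_{f}$, which is what makes the dimensional correction term $-(df \otimes df)/(N-n)$ match exactly the conformal factor needed to trivialize the reparametrization (cf.~\cite{WY}). Once this identity is in hand, the remainder of the argument is a scaling computation together with the already-established Theorem \ref{thm:inscribed radius comparison}.
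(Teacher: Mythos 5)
The paper itself gives no proof of this statement---it is quoted from \cite{S2} (Theorem 6.3 there)---and your sketch faithfully reconstructs the argument used in that reference: the Wylie--Yeroshkin reparametrization $s(t)=\int_{0}^{t}e^{-2f(\gamma_{z}(\tau))/(n-1)}\,d\tau$ converts the weighted Riccati comparison into the unweighted one with parameters $(\kappa,\lambda)$, forcing $s(\tau(z))\leq C_{\kappa,\lambda}$, and the hypothesis $f\leq(n-1)\delta$ gives $s(t)\geq e^{-2\delta}t$, whence $\tau(z)\leq e^{2\delta}C_{\kappa,\lambda}=C_{\kappa e^{-4\delta},\lambda e^{-2\delta}}$ by the standard rescaling of the model radius. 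The one step you assert rather than verify---that the Bochner/Riccati identity in the $s$-variable absorbs the weights exactly when $N=1$---is precisely the computation carried out in \cite{S2} following \cite{WY}, so your outline is correct and takes essentially the same route as the cited proof.
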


For $\kappa,\lambda \in \mathbb{R}$,
let $s_{\kappa,\lambda}(t)$ be a unique solution of the Jacobi equation $\psi''(t)+\kappa\, \psi(t)=0$ with $\psi(0)=1,\,\psi'(0)=-\lambda$.
Notice that
$\kappa$ and $\lambda$ satisfy the ball-condition if and only if
the equation $s_{\kappa,\lambda}(t)=0$ has a positive solution;
moreover,
$\const=\inf \{\,t>0 \mid s_{\kappa,\lambda}(t)=0\, \}$.
We define $\bar{C}_{\kappa,\lambda}$ as follows:
If $\kappa$ and $\lambda$ satisfy the ball-condition,
then $\bar{C}_{\kappa,\lambda}:=\const$;
otherwise,
$\bar{C}_{\kappa,\lambda}:=\infty$.
For $N\in (1,\infty)$,
let us define a function $s_{N,\kappa,\lambda}:(0,\infty]\to (0,\infty]$ by
\begin{equation}\label{eq:one dimensional volume growth}
\bar{s}_{\kappa,\lambda}(t) := \begin{cases}
                                            s_{\kappa,\lambda}(t) & \text{if $t< \bar{C}_{\kappa,\lambda}$},\\
                                            0                     & \text{if $t\geq    \bar{C}_{\kappa,\lambda}$},
                                 \end{cases}
                                 \quad s_{N,\kappa,\lambda}(r):=\int^{r}_{0}\, \bar{s}^{N-1}_{\kappa,\lambda}(t)\,dt.
\end{equation}

\begin{rem}\label{rem:formulation of finite dimensional model spaces}
For $\kappa,\lambda\in \mathbb{R}$,
the value $s_{n,\kappa,\lambda}(\bar{C}_{\kappa,\lambda})$ is finite if and only if
either (1) $\kappa$ and $\lambda$ satisfy the ball-condition;
or (2) $\kappa<0$ and $\lambda=\sqrt{\vert \kappa \vert}$ (cf. formulation of finite dimensional model spaces in Subsection \ref{sec:Comparisons and concentration phenomena}).
\end{rem}

For $r>0$ and $\Omega \subset M$,
let $B_{r}(\Omega)$ stand for the closed $r$-neighborhood of $\Omega$.
We next recall the following relative volume comparison theorem for metric neighborhoods of boundaries (see Theorem 5.4 in \cite{S1}):
\begin{thm}[\cite{S1}]\label{thm:volume comparison}
Let $\bm$ be compact.
For $N\in [n,\infty)$,
we assume $\ric^{N}_{f,\bm^{\perp}}\geq (N-1)\kappa$ and $H_{f,\bm}\geq (N-1)\lambda$.
Then for all $r,R>0$ with $r\leq R$
we have
\begin{equation*}
\frac{m_{M,f}(B_{R}(\bm))}{m_{M,f}(B_{r}(\bm))}\leq \frac{s_{N,\kappa,\lambda}(R)}{s_{N,\kappa,\lambda}(r)}.
\end{equation*}
\end{thm}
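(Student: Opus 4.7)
The plan is to follow the classical Bishop-Gromov strategy, adapted to tubular neighborhoods of $\bm$ rather than geodesic balls around a point: I would first establish a pointwise monotonicity for the weighted volume element $\theta_{f}(\cdot,z)$ along each normal geodesic $\gamma_{z}$, then apply a one-dimensional averaging lemma fiberwise in $z$, and finally integrate over $\bm$ using the integration formula (\ref{eq:integration formula}).

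The core step is a weighted Laplacian comparison asserting that, for each $z\in\bm$, the ratio $\theta_{f}(t,z)/s^{N-1}_{\kappa,\lambda}(t)$ is non-increasing in $t$ on $(0,\min\{\tau(z),\bar{C}_{\kappa,\lambda}\})$. Equivalently, by (\ref{eq:Laplacian representation}), this is a pointwise comparison between $\Delta_{f}\,\rho_{\bm}(\gamma_{z}(t))$ and the model quantity $-(N-1)s'_{\kappa,\lambda}(t)/s_{\kappa,\lambda}(t)$. To derive it, I would apply the Bochner formula (\ref{eq:Bochner formula}) to $\rho_{\bm}$ along $\gamma_{z}$, producing a Riccati-type inequality whose right-hand side involves $\ric^{\infty}_{f}(\gamma'_{z}(t))$ and the Hilbert-Schmidt Hessian term $\|\Hess\rho_{\bm}\|^{2}_{\HS}$. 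Combining the Cauchy-Schwarz estimate $\|\Hess\rho_{\bm}\|^{2}_{\HS}\geq (\tr\Hess\rho_{\bm})^{2}/(n-1)$ with the hypothesis $\ric^{N}_{f,\bm^{\perp}}\geq(N-1)\kappa$, the standard algebraic absorption trick trades the $-df\otimes df/(N-n)$ correction in the definition of $\ric^{N}_{f}$ against the gradient term produced in the Hessian estimate, yielding a clean synthetic Riccati in dimension $N-1$ rather than $n-1$. The initial condition at $t\to 0$ is controlled by $H_{f,\bm}\geq(N-1)\lambda$, and Riccati comparison with the model equation satisfied by $s_{\kappa,\lambda}$ finishes this step; it is the content of Lemma 3.4 of \cite{S1} already cited in the paper.

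Extending $\theta_{f}(\cdot,z)$ and $s^{N-1}_{\kappa,\lambda}$ by zero past their respective cut values, the ratio $\bar{\theta}_{f}(t,z)/\bar{s}^{N-1}_{\kappa,\lambda}(t)$ inherits the same monotonicity in $t$ for every $z\in\bm$. The classical one-dimensional averaging lemma — if $\alpha,\beta\geq 0$ and $\alpha/\beta$ is non-increasing, then $\int_{0}^{R}\alpha/\int_{0}^{r}\alpha\leq \int_{0}^{R}\beta/\int_{0}^{r}\beta$ for $0<r\leq R$ — applied fiberwise gives
\begin{equation*}
\int_{0}^{R}\bar{\theta}_{f}(t,z)\,dt \leq \frac{s_{N,\kappa,\lambda}(R)}{s_{N,\kappa,\lambda}(r)}\int_{0}^{r}\bar{\theta}_{f}(t,z)\,dt.
\end{equation*}
Integrating this inequality over the compact boundary $\bm$ against $d\vol_{h}$ and invoking (\ref{eq:integration formula}) on both sides yields the claimed relative volume bound. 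The main obstacle is Step 1 in the genuinely weighted regime $N>n$: handling the $-df\otimes df/(N-n)$ correction via the completing-the-square trick, which is precisely what promotes the geometric dimension $n-1$ to the synthetic dimension $N-1$ in the model Riccati and hence in the volume exponent. Once this pointwise comparison is secured, the averaging and integration steps are routine.
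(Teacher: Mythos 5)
Your proposal is correct and is essentially the argument the paper relies on: Theorem \ref{thm:volume comparison} is quoted from \cite{S1}, where it is proved exactly by the Bochner--Riccati comparison for $\Delta_f\rho_{\bm}$ (with the completing-the-square absorption of $df\otimes df/(N-n)$ promoting $n-1$ to $N-1$), followed by the fiberwise averaging of $\bar{\theta}_f(\cdot,z)/\bar{s}^{N-1}_{\kappa,\lambda}$ and integration over $\bm$ via (\ref{eq:integration formula}). This is also precisely the template the present paper carries out for the $N=\infty$ analogue (Lemmas \ref{lem:Laplacian comparison}, \ref{lem:volume element comparison} and Theorem \ref{thm:relative volume comparison}), so no gap.
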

\begin{rem}
Under the same setting as in Theorem \ref{thm:volume comparison},
Bayle \cite{B} has stated a similar absolute volume comparison of Heintze-Karcher type (see Theorem E.2.2 in \cite{B}, and see also \cite{HK}, \cite{Mi3}, \cite{Mi4}, \cite{Mo}).
\end{rem}

We further recall the following comparison (see Theorem 7.6 in \cite{S2}):
\begin{thm}[\cite{S2}]\label{thm:twisted volume comparison}
Let $\bm$ be compact.
Let us assume $\ric^{1}_{f,\bm^{\perp}}\geq (n-1) \,\kappa \,e^{\frac{-4f}{n-1}}$ and $H_{f,\bm}\geq (n-1)\,\lambda\,e^{\frac{-2f}{n-1}}$.
Suppose additionally that
$f\leq (n-1)\delta$ for some $\delta \in \mathbb{R}$.
Assume that
one of the following holds:
\begin{enumerate}
\item $\kappa$ and $\lambda$ satisfy the convex-ball-condition;
\item $\kappa \leq 0$ and $\lambda=\sqrt{\vert \kappa \vert}$.
\end{enumerate}
Then for all $r,R>0$ with $r\leq R$
we have
\begin{equation*}
\frac{m_{M,f}(B_{R}(\bm))}{m_{M,f}(B_{r}(\bm))}\leq \frac{s_{n,\kappa\,e^{-4\delta},\lambda\,e^{-2\delta}}(R)}{s_{n,\kappa\,e^{-4\delta},\lambda\,e^{-2\delta}}(r)}.
\end{equation*}
\end{thm}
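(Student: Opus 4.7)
The plan is to follow the blueprint of Theorem~\ref{thm:volume comparison}, replacing the finite-$N$ Laplacian/volume-element comparison by its $N=1$ twisted analogue and then converting the twist into a rescaling of the model parameters via the density bound $f\leq (n-1)\delta$. The three ingredients to assemble are: (i) a Riccati comparison along $\bm$-normal geodesics, (ii) a scaling identity for the Jacobi model $\bar{s}_{\kappa,\lambda}$, and (iii) the Gromov--Bishop monotonicity lemma applied to the resulting quotient.

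Fix $z\in\bm$ and let $\gamma_z$ be the inward normal geodesic. By (\ref{eq:Laplacian representation}), the function $\theta_f(\,\cdot\,,z)$ is governed by $\Delta_f\rho_{\bm}\circ\gamma_z$, which satisfies a Riccati inequality coming from (\ref{eq:Bochner formula}). Under $\ric^{1}_{f,\bm^{\perp}}\geq (n-1)\kappa\, e^{-4f/(n-1)}$ and $H_{f,\bm}\geq (n-1)\lambda\, e^{-2f/(n-1)}$, this inequality is naturally untwisted by the affine parameter
$$s(t):=\int_0^t e^{-2 f(\gamma_z(\tau))/(n-1)}\,d\tau,$$
in which it becomes the standard Riccati with curvature $\kappa$ and initial mean curvature $\lambda$. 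Integrating produces a pointwise comparison of the form $\bar{\theta}_f(t,z)\leq C(z)\,\bar{s}_{\kappa,\lambda}^{\,n-1}(s(t))$, which is the same volume-element comparison that underlies Theorem~\ref{thm:twisted inscribed radius comparison}.

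The density bound $f\leq (n-1)\delta$ enters through two observations. First, $e^{-2f/(n-1)}\geq e^{-2\delta}$ gives $s(t)\geq e^{-2\delta}t$. Second, substituting $u=e^{-2\delta}t$ in $\psi''+\kappa\psi=0$ with $\psi(0)=1,\,\psi'(0)=-\lambda$ yields the scaling identity
$$s_{\kappa,\lambda}(e^{-2\delta}t)=s_{\kappa e^{-4\delta},\,\lambda e^{-2\delta}}(t),\qquad \bar{C}_{\kappa e^{-4\delta},\lambda e^{-2\delta}}=e^{2\delta}\bar{C}_{\kappa,\lambda}.$$
Under the convex-ball-condition (Case~(1)), an elementary sign analysis of the Jacobi ODE together with $\lambda\geq 0$ shows $\bar{s}_{\kappa,\lambda}$ is non-increasing on $[0,\const]$, so $s(t)\geq e^{-2\delta}t$ forces $\bar{s}_{\kappa,\lambda}(s(t))\leq \bar{s}_{\kappa e^{-4\delta},\lambda e^{-2\delta}}(t)$; in Case~(2), $\bar{s}_{\kappa,\lambda}$ is the explicit exponential $e^{-\sqrt{|\kappa|}\,\cdot}$ (or the constant $1$ when $\kappa=0$), and the same inequality is immediate.

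Combining the two steps, the ratio $t\mapsto \bar{\theta}_f(t,z)/\bar{s}_{\kappa e^{-4\delta},\lambda e^{-2\delta}}^{\,n-1}(t)$ is non-increasing in $t$ for each fixed $z\in \bm$. The classical Gromov--Bishop monotonicity lemma then upgrades this to monotonicity in $r$ of the quotient
$$\frac{\int_0^r \bar{\theta}_f(t,z)\,dt}{s_{n,\kappa e^{-4\delta},\lambda e^{-2\delta}}(r)},$$
and integrating in $z\in\bm$ via (\ref{eq:integration formula}) yields the stated inequality. The main obstacle is verifying that the monotonicity of the pointwise quotient survives the undoing of the $z$-dependent reparametrization $t\mapsto s(t)$: since the rate $ds/dt$ is not itself monotone in general, one has to work directly in the $t$-variable, differentiating the quotient and exploiting the sign of $\bar{s}'_{\kappa,\lambda}$ guaranteed by (1) or (2); this is precisely where the convex-ball-condition (rather than the plain ball-condition) is needed.
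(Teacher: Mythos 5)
The paper itself offers no proof of this statement --- it is quoted verbatim from \cite{S2} (Theorem 7.6 there) --- so I can only judge your reconstruction on its own terms. Your architecture is the right one: the Riccati/volume-element comparison in the reparametrized variable $\sigma(t):=\int_0^t e^{-2f(\gamma_z(\tau))/(n-1)}\,d\tau$, the scaling identity $s_{\kappa,\lambda}(e^{-2\delta}t)=s_{\kappa e^{-4\delta},\lambda e^{-2\delta}}(t)$, and the final integration over $z\in\bm$ as in the proof of Theorem \ref{thm:relative volume comparison} are all correct, as is the monotonicity of $\bar{s}_{\kappa,\lambda}$ under (1) or (2). The gap is at the central step. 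What your two ingredients actually deliver is the pointwise domination $\bar{\theta}_f(t,z)\leq C(z)\,\bar{s}^{\,n-1}_{\kappa e^{-4\delta},\lambda e^{-2\delta}}(t)$, which yields only an absolute Heintze--Karcher-type bound; the stated \emph{relative} comparison needs the two-point inequality, i.e., that $t\mapsto\bar{\theta}_f(t,z)/\bar{s}^{\,n-1}_{\kappa e^{-4\delta},\lambda e^{-2\delta}}(t)$ is non-increasing, and this does not follow from ``combining the two steps'' as you assert.

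You flag this as the ``main obstacle,'' but the fix you indicate --- differentiating the quotient and using the sign of $\bar{s}'_{\kappa,\lambda}$ --- is not sufficient. The Riccati comparison gives $\frac{d}{dt}\log\theta_f(t,z)\leq(n-1)\,\sigma'(t)\,(s'_{\kappa,\lambda}/s_{\kappa,\lambda})(\sigma(t))$, so what must be proved is
\begin{equation*}
\sigma'(t)\,\frac{s'_{\kappa,\lambda}}{s_{\kappa,\lambda}}(\sigma(t))\ \leq\ e^{-2\delta}\,\frac{s'_{\kappa,\lambda}}{s_{\kappa,\lambda}}(e^{-2\delta}t).
\end{equation*}
The sign $s'_{\kappa,\lambda}\leq 0$ together with $\sigma'\geq e^{-2\delta}$ only handles the prefactor, giving $\sigma'(s'_{\kappa,\lambda}/s_{\kappa,\lambda})(\sigma(t))\leq e^{-2\delta}(s'_{\kappa,\lambda}/s_{\kappa,\lambda})(\sigma(t))$. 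To then replace the argument $\sigma(t)$ by the smaller argument $e^{-2\delta}t$ you additionally need $s'_{\kappa,\lambda}/s_{\kappa,\lambda}$ to be non-increasing, i.e., $\log s_{\kappa,\lambda}$ concave on $[0,\bconst)$. This is a genuinely separate verification: $(\log s_{\kappa,\lambda})''=-\kappa-(s'_{\kappa,\lambda}/s_{\kappa,\lambda})^2$ is $\leq 0$ automatically when $\kappa\geq 0$, while for $\kappa<0$ it requires $\vert s'_{\kappa,\lambda}/s_{\kappa,\lambda}\vert\geq\sqrt{\vert\kappa\vert}$, which holds precisely when $\lambda\geq\sqrt{\vert\kappa\vert}$. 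So hypotheses (1)/(2) must be used twice --- once for the sign, once for the log-concavity --- and the second use is absent from your argument. With that lemma supplied (plus the routine bookkeeping that $\bar{\theta}_f(t,z)=0$ wherever $\bar{s}_{\kappa e^{-4\delta},\lambda e^{-2\delta}}(t)=0$, via the inscribed radius comparison), your proof closes.
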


\begin{rem}\label{rem:remark on the curvature assumption}
In \cite{S1},
the author has stated that
the comparison inequalities in Theorems \ref{thm:inscribed radius comparison} and \ref{thm:volume comparison} hold
under the curvature condition $\ric^{N}_{f,M}\geq (N-1)\kappa$ and $H_{f,\bm}\geq (N-1)\lambda$.
Actually,
the author \cite{S1} has proved such comparison inequalities under the curvature condition $\ric^{N}_{f,\bm^{\perp}}\geq (N-1)\kappa$ and $H_{f,\bm}\geq (N-1)\lambda$ relying on the Laplacian comparison for the distance function $\rho_{\bm}$.
We can say the same thing for the inequalities in Theorems \ref{thm:twisted inscribed radius comparison} and \ref{thm:twisted volume comparison} (see \cite{S2}).
\end{rem}

\section{Invariants}\label{sec:Invariants}

%%%%%%%%%%%%%%%%%%%%%%%%
\subsection{Isomophisms}
We first introduce the following notion:
\begin{defi}\label{defi:dominance}
For $i=1,2$,
let $X_{i}=(X_{i},d_{X_{i}},\mu_{X_{i}})$ be metric measure spaces with boundary.
We say that \textit{$X_{1}$ dominates $X_{2}$} if
there exists a $1$-Lipschitz map $\Phi:X_{1}\to X_{2}$ such that
\begin{equation}\label{eq:dominance}
\Phi_{\#}\mu_{X_{1}}=\mu_{X_{2}},\quad \Phi(\partial X_{1})\subset \partial X_{2},
\end{equation}
where $\Phi_{\#}\mu_{X_{1}}$ denotes the push-forward of $\mu_{X_{1}}$ by $\Phi$.
We also say that
$X_{1}$ and $X_{2}$ are \textit{isomorphic} to each other if
they dominate each other.
\end{defi}

\begin{rem}\label{rem:mm-spaces}
A triple $\mathcal{X}=(\mathcal{X},r_{\mathcal{X}},\sigma_{\mathcal{X}})$ is said to be an \textit{mm-space} when
$(\mathcal{X},r_{\mathcal{X}})$ is a complete separable metric space,
and $\sigma_{\mathcal{X}}$ is a Borel probability measure on $\mathcal{X}$.
For $i=1,2$,
let $\mathcal{X}_{i}=(\mathcal{X}_{i},r_{\mathcal{X}_{i}},\sigma_{\mathcal{X}_{i}})$ be mm-spaces.
They are said to be \textit{mm-isomorphic} to each other if 
there exists an isometry $\Phi:\supp \sigma_{\mathcal{X}_{1}}\to \supp \sigma_{\mathcal{X}_{2}}$ such that $\Phi_{\#}\sigma_{\mathcal{X}_{1}}=\sigma_{\mathcal{X}_{2}}$,
where $\supp \sigma_{\mathcal{X}_{i}}$ are the support of $\sigma_{\mathcal{X}_{i}}$.
It is said that
$\mathcal{X}_{1}$ \textit{dominates} $\mathcal{X}_{2}$ if
there exists a $1$-Lipschitz map $\Psi:\mathcal{X}_{1}\to \mathcal{X}_{2}$ such that $\Psi_{\#}\sigma_{\mathcal{X}_{1}}=\sigma_{\mathcal{X}_{2}}$.
It is well-known that
if $\mathcal{X}_{1}$ and $\mathcal{X}_{2}$ dominate each other,
then they are mm-isomorphic to each other (see e.g., Proposition 2.11 in \cite{Sh}).

For $i=1,2$,
let $X_{i}=(X_{i},d_{X_{i}},\mu_{X_{i}})$ be two metric measure spaces with boundary.
If $X_{1}$ and $X_{2}$ are isomorphic to each other in the sense of Definition \ref{defi:dominance},
then they dominate each other as mm-spaces;
in particular,
they are mm-isomorphic to each other.
\end{rem}

Let us show the following monotonicity of the partial inscribed radius defined as (\ref{eq:partial inscribed radius}) (cf. Proposition 2.18 in \cite{Sh}):
\begin{lem}\label{lem:monotonicity of the partial inscribed radius}
For $i=1,2$,
let $X_{i}=(X_{i},d_{X_{i}},\mu_{X_{i}})$ be metric measure spaces with boundary.
If $X_{1}$ dominates $X_{2}$,
then for every $\eta>0$
\begin{equation*}
\PI(X_{2};1-\eta)\leq \PI(X_{1};1-\eta).
\end{equation*}
\end{lem}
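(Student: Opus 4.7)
The plan is to reduce the domination inequality to a pointwise comparison of the distance functions $\rho_{\partial X_1}$ and $\rho_{\partial X_2} \circ \Phi$, and then transport any nearly full measure set from $X_1$ to a sublevel set of $\rho_{\partial X_2}$ in $X_2$.

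First I would establish the key pointwise inequality: for every $x \in X_1$,
\begin{equation*}
\rho_{\partial X_2}(\Phi(x)) \leq \rho_{\partial X_1}(x).
\end{equation*}
This follows immediately from the two hypotheses on $\Phi$ in Definition \ref{defi:dominance}: for any $z \in \partial X_1$ one has $\Phi(z) \in \partial X_2$, so by $1$-Lipschitzness $\rho_{\partial X_2}(\Phi(x)) \leq d_{X_2}(\Phi(x),\Phi(z)) \leq d_{X_1}(x,z)$, and taking the infimum over $z \in \partial X_1$ yields the claim.

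Next, fix an arbitrary Borel $\Omega_1 \subset X_1$ with $\mu_{X_1}(\Omega_1) \geq 1-\eta$ and set $R := \IR \Omega_1$. The natural candidate set in $X_2$ is
\begin{equation*}
A := \{\, y \in X_2 \mid \rho_{\partial X_2}(y) \leq R \,\},
\end{equation*}
which is closed (hence Borel) because $\rho_{\partial X_2}$ is continuous. The pointwise inequality above gives $\Phi(\Omega_1) \subset A$, so $\Omega_1 \subset \Phi^{-1}(A)$. Using $\Phi_{\#}\mu_{X_1} = \mu_{X_2}$, we obtain
\begin{equation*}
\mu_{X_2}(A) = \mu_{X_1}(\Phi^{-1}(A)) \geq \mu_{X_1}(\Omega_1) \geq 1-\eta.
\end{equation*}
By construction $\IR A \leq R = \IR \Omega_1$, hence $A$ is admissible in the infimum defining $\PI(X_2;1-\eta)$ and we conclude $\PI(X_2;1-\eta) \leq \IR \Omega_1$. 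Taking the infimum over all admissible $\Omega_1 \subset X_1$ delivers the desired inequality.

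There is no real obstacle here; the only things to check are that $A$ is Borel (immediate from continuity of $\rho_{\partial X_2}$) and that the case $R = +\infty$ or $\partial X_2 = \emptyset$ is trivial. The slight subtlety worth being explicit about is that one should \emph{not} try to use the direct image $\Phi(\Omega_1)$, which need not be Borel; replacing it with the sublevel set $A \supset \Phi(\Omega_1)$ sidesteps all measurability issues while preserving both the measure estimate and the inscribed radius bound.
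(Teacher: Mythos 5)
Your proof is correct, and it follows the same basic strategy as the paper: push the candidate set $\Omega_1$ forward to $X_2$ and control its inscribed radius via the $1$-Lipschitz property of $\Phi$ together with $\Phi(\partial X_1)\subset\partial X_2$. The two implementations differ in a small but worthwhile way. The paper takes the closure $\overline{\Phi(\Omega_1)}$ as the comparison set and proves $\IR\overline{\Phi(\Omega_1)}\leq\IR\Omega_1$ by a sequential argument that invokes the properness of $X_1$ to produce a nearest boundary point $z_{1,j}$ realizing $\rho_{\partial X_1}(x_{1,j})$. You instead isolate the pointwise inequality $\rho_{\partial X_2}\circ\Phi\leq\rho_{\partial X_1}$ by taking an infimum over $z\in\partial X_1$, which needs no existence of a minimizer and hence no properness, and then replace $\Phi(\Omega_1)$ by the sublevel set $A=\{\rho_{\partial X_2}\leq\IR\Omega_1\}$, which is automatically closed with $\IR A\leq\IR\Omega_1$ by construction. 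Both choices sidestep the non-measurability of the direct image $\Phi(\Omega_1)$; yours is marginally more elementary and would survive in settings where nearest boundary points need not exist. No gaps.
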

\begin{proof}
By the assumption for the dominance,
there exists a $1$-Lipschitz map $\Phi:X_{1}\to X_{2}$ such that (\ref{eq:dominance}).
We fix a Borel subset $\Omega \subset X_{1}$ with $\mu_{X_{1}}(\Omega)\geq 1-\eta$.
From $\Phi_{\#}\mu_{X_{1}}=\mu_{X_{2}}$
it follows that
\begin{equation}\label{lem:first key of the monotonicity of the partial inscribed radius}
\mu_{X_{2}}\bigl( \overline{\Phi(\Omega)}  \bigl)=\mu_{X_{1}}\bigl(\Phi^{-1}(\overline{\Phi(\Omega)} )\bigl) \geq \mu_{X_{1}}(\Omega)\geq 1-\eta,
\end{equation}
where $\overline{\Phi(\Omega)}$ denotes the closure of $\Phi(\Omega)$.

We now show
\begin{equation}\label{lem:second key of the monotonicity of the partial inscribed radius}
\IR \overline{\Phi(\Omega)} \leq \IR \Omega,
\end{equation}
where $\IR \overline{\Phi(\Omega)}$ and $\IR \Omega$ are the inscribed radii of $\overline{\Phi(\Omega)}$ and $\Omega$ defined as (\ref{eq:inscribed radius}),
respectively.
We fix $x_{2}\in \overline{\Phi(\Omega)}$,
and take a sequence $\{ x_{2,j} \}$ in $\Phi(\Omega)$ with $x_{2,j}\to x_{2}$.
For each $j$
we have $x_{1,j}\in \Omega$ satisfying $x_{2,j}=\Phi(x_{1,j})$.
By using the properness of $X_{1}$,
we can choose $z_{1,j}\in \partial X_{1}$ with $d_{X_{1}}(x_{1,j},z_{1,j})=\rho_{\partial X_{1}}(x_{1,j})$.
We set $z_{2,j}:=\Phi(x_{2,j})$.
From $\Phi(\partial X_{1})\subset \partial X_{2}$
we deduce $z_{2} \in \partial X_{2}$.
Since $\Phi$ is $1$-Lipschitz,
we see
\begin{equation*}
\rho_{\partial X_{2}}(x_{2,j})\leq d_{X_{2}}(x_{2,j},z_{2,j})\leq d_{X_{1}}(x_{1,j},z_{1,j})=\rho_{\partial X_{1}}(x_{1,j})\leq \IR \Omega.
\end{equation*}
Letting $j\to \infty$,
we derive $\rho_{\partial X_{2}}(x_{2})\leq \IR \Omega$.
This yields (\ref{lem:second key of the monotonicity of the partial inscribed radius}).
In virtue of (\ref{lem:first key of the monotonicity of the partial inscribed radius}) and (\ref{lem:second key of the monotonicity of the partial inscribed radius}),
we obtain
\begin{equation*}
\PI(X_{2};1-\eta)\leq \IR \overline{\Phi(\Omega)} \leq \IR \Omega.
\end{equation*}
The arbitrariness of $\Omega$ completes the proof.
\end{proof}

We also have the following monotonicity of the observable inscribed radius
introduced in Definition \ref{defi:observable inscribed radius} (cf. Proposition 2.18 in \cite{Sh}).
The proof is straightforward,
and we omit it.
\begin{lem}\label{lem:monotonicity of the observable inscribed radius}
For $i=1,2$,
let $X_{i}=(X_{i},d_{X_{i}},\mu_{X_{i}})$ be metric measure spaces with boundary.
If $X_{1}$ dominates $X_{2}$,
then for every $\eta>0$
\begin{equation*}
\OI(X_{2};-\eta)\leq \OI(X_{1};-\eta).
\end{equation*}
\end{lem}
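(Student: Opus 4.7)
The plan is to argue directly from the definition of the observable inscribed radius by pulling back test functions from $X_{2}$ to $X_{1}$ through the dominance map $\Phi$.

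By the hypothesis of dominance, fix a $1$-Lipschitz map $\Phi:X_{1}\to X_{2}$ with $\Phi_{\#}\mu_{X_{1}}=\mu_{X_{2}}$ and $\Phi(\partial X_{1})\subset \partial X_{2}$. Given an arbitrary $1$-Lipschitz function $\varphi_{2}:X_{2}\to I$ with $\varphi_{2}|_{\partial X_{2}}=0$, I will set $\varphi_{1}:=\varphi_{2}\circ \Phi:X_{1}\to I$. Two routine observations will do the bulk of the work: $\varphi_{1}$ is $1$-Lipschitz as a composition of $1$-Lipschitz maps, and $\varphi_{1}|_{\partial X_{1}}=0$ because $\Phi(\partial X_{1})\subset \partial X_{2}$ and $\varphi_{2}$ vanishes there. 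Hence $\varphi_{1}$ is an admissible competitor for the supremum defining $\OI(X_{1};-\eta)$.

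The key computation is that the $\varphi_{1}$-screen and the $\varphi_{2}$-screen coincide as metric measure spaces with boundary. Indeed,
\begin{equation*}
m_{I,\varphi_{1}}=(\varphi_{2}\circ \Phi)_{\#}\mu_{X_{1}}=\varphi_{2,\#}\left(\Phi_{\#}\mu_{X_{1}}\right)=\varphi_{2,\#}\mu_{X_{2}}=m_{I,\varphi_{2}},
\end{equation*}
and the underlying space and distance are the same screen $(I,d_{I})$. Therefore $I_{\varphi_{1}}=I_{\varphi_{2}}$, and in particular $\PI(I_{\varphi_{2}};1-\eta)=\PI(I_{\varphi_{1}};1-\eta)$.

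Combining these facts, for any admissible $\varphi_{2}$,
\begin{equation*}
\PI(I_{\varphi_{2}};1-\eta)=\PI(I_{\varphi_{1}};1-\eta)\leq \sup_{\varphi}\PI(I_{\varphi};1-\eta)=\OI(X_{1};-\eta),
\end{equation*}
where the supremum is over $1$-Lipschitz $\varphi:X_{1}\to I$ vanishing on $\partial X_{1}$. Taking the supremum over $\varphi_{2}$ yields $\OI(X_{2};-\eta)\leq \OI(X_{1};-\eta)$. There is no real obstacle here; the only point to be careful about is verifying that $\varphi_{1}$ satisfies the Dirichlet condition, which is exactly where the second clause $\Phi(\partial X_{1})\subset \partial X_{2}$ in the definition of dominance is used, and it is precisely this that distinguishes the statement from the usual mm-isomorphism monotonicity (Proposition 2.18 in \cite{Sh}) and makes the direct proof of Lemma \ref{lem:monotonicity of the partial inscribed radius} unnecessary as an intermediate step.
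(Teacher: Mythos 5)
Your proof is correct, and it is exactly the ``straightforward'' argument that the paper omits: pull back an admissible test function $\varphi_{2}$ on $X_{2}$ to $\varphi_{2}\circ\Phi$ on $X_{1}$, check the Lipschitz and Dirichlet conditions, and observe that the two screens coincide because $(\varphi_{2}\circ\Phi)_{\#}\mu_{X_{1}}=\varphi_{2,\#}\mu_{X_{2}}$. No gaps.
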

%\begin{proof}
%There is a $1$-Lipschitz map $\Phi:X_{1}\to X_{2}$ such that (\ref{eq:dominance}).
%We fix a $1$-Lipschitz function $\varphi_{2}:X_{2}\to I$ with $\varphi_{2}|_{\partial X_{2}}=0$,
%and define a $1$-Lipschitz function $\varphi_{1}:X_{1}\to I$ by $\varphi_{1}:=\varphi_{2} \circ \Phi$.
%Notice that
%$\varphi_{1}|_{\partial X_{1}}=0$ and $m_{I,\varphi_{1}}=m_{I,\varphi_{2}}$,
%where $m_{I,\varphi_{i}}$ are the Borel probability measures on the $\varphi_{i}$-screens $I_{\varphi_{i}}$ defined as (\ref{eq:screen}).
%Hence
%we have
%\begin{equation*}
%\PI\left(I_{\varphi_{2}};1-\eta \right)=\PI \bigl(I_{\varphi_{1}};1-\eta \bigl)\leq \OI(X_{1};-\eta).
%\end{equation*}
%This implies the desired monotonicity.
%\end{proof}
According to Lemmas \ref{lem:monotonicity of the partial inscribed radius} and \ref{lem:monotonicity of the observable inscribed radius},
they are invariants under the isomorphism introduced in Definition \ref{defi:dominance}.

%%%%%%%%%%%%%%%%%%%%%%%%
\subsection{Boundary separation distances}
Let $X=(X,d_{X},\mu_{X})$ be a metric measure space with boundary,
and let $k\geq 1$ be an integer.
For positive numbers $\eta_{1},\dots,\eta_{k}>0$,
we denote by $\mathcal{S}_{X}(\eta_{1},\dots,\eta_{k})$ the set of all sequences $\{\Omega_{\alpha}\}^{k}_{\alpha=1}$ of Borel subsets $\Omega_{\alpha}$ with $\mu_{X}(\Omega_{\alpha})\geq \eta_{\alpha}$.
For a sequence $\{\Omega_{\alpha}\}^{k}_{\alpha=1} \in \mathcal{S}_{X}(\eta_{1},\dots,\eta_{k})$,
we set
\begin{equation*}
\mathcal{D}_{X}\bigl(\{\Omega_{\alpha}\}^{k}_{\alpha=1}\bigl):=\min \left \{\, \min_{\alpha \neq \beta} d_{X}(\Omega_{\alpha},\Omega_{\beta}),\,\, \min_{\alpha} d_{X}(\Omega_{\alpha},\partial X)  \,\right\}.
\end{equation*}

We now define the following quantity:
\begin{defi}\label{defi:Dirichlet separation distance}
Let $X=(X,d_{X},\mu_{X})$ be a metric measure space with boundary.
For $\eta_{1},\dots,\eta_{k}>0$,
we define the \textit{$(\eta_{1},\dots,\eta_{k})$-boundary separation distance $\DS(X;\eta_{1},\dots,\eta_{k})$ of $X$} as follows:
If $\mathcal{S}_{X}(\eta_{1},\dots,\eta_{k})$ is non-empty,
then 
\begin{equation*}
\DS(X;\eta_{1},\dots,\eta_{k}):=\sup\,\mathcal{D}_{X}\bigl(\{\Omega_{\alpha}\}^{k}_{\alpha=1}\bigl),
\end{equation*}
where the supremum is taken over all $\{\Omega_{\alpha}\}^{k}_{\alpha=1} \in \mathcal{S}_{X}(\eta_{1},\dots,\eta_{k})$;
otherwise,
$\DS(X;\eta_{1},\dots,\eta_{k}):=0$.
\end{defi}
The boundary separation distance $\DS(X;\eta_{1},\dots,\eta_{k})$ is monotone non-increasing in $\eta_{\alpha}$ for each $\alpha=1,\dots,k$.

\begin{rem}\label{rem:Gromov separation distance}
The boundary separation distance is an analogue of the \textit{separation distance} on mm-spaces introduced by Gromov \cite{G}.
For later convenience,
we recall its precise definition:
Let $\mathcal{X}=(\mathcal{X},r_{\mathcal{X}},\sigma_{\mathcal{X}})$ be an mm-space (see Remark \ref{rem:mm-spaces}).
For positive numbers $\eta_{0}, \eta_{1},\dots,\eta_{k}>0$,
the \textit{$(\eta_{0}, \eta_{1},\dots,\eta_{k})$-separation distance} is defined as
\begin{equation}\label{eq:Gromov separation distance}
\SE(\mathcal{X};\eta_{0}, \eta_{1},\dots,\eta_{k}):=\sup\, \min_{\alpha \neq \beta} r_{\mathcal{X}}(\Omega_{\alpha},\Omega_{\beta}),
\end{equation}
where the supremum is taken over all sequences $\{\Omega_{\alpha}\}^{k}_{\alpha=0}$ of Borel subsets $\Omega_{\alpha} \subset \mathcal{X}$ with $\sigma_{\mathcal{X}}(\Omega_{\alpha})\geq \eta_{\alpha}$.
If there exists no such sequence,
then we set $\SE(\mathcal{X};\eta_{0}, \eta_{1},\dots,\eta_{k}):=0$.
\end{rem}

We verify the following monotonicity (cf. Lemma 2.25 in \cite{Sh}):
\begin{lem}\label{lem:monotonicity of Dirichlet separation distance}
For $i=1,2$,
let $X_{i}=(X_{i},d_{X_{i}},\mu_{X_{i}})$ be metric measure spaces with boundary.
If $X_{1}$ dominates $X_{2}$,
then for all $\eta_{1},\dots,\eta_{k}>0$
\begin{equation*}
\DS(X_{2};\eta_{1},\dots,\eta_{k})\leq \DS(X_{1};\eta_{1},\dots,\eta_{k}).
\end{equation*}
\end{lem}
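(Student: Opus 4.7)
The plan is to mimic exactly the domination-pullback strategy already used in the proofs of Lemmas 3.3 and 3.4. By the hypothesis, there exists a $1$-Lipschitz map $\Phi:X_{1}\to X_{2}$ satisfying $\Phi_{\#}\mu_{X_{1}}=\mu_{X_{2}}$ and $\Phi(\partial X_{1})\subset \partial X_{2}$. First, if $\mathcal{S}_{X_{2}}(\eta_{1},\dots,\eta_{k})$ is empty, then $\DS(X_{2};\eta_{1},\dots,\eta_{k})=0$ and there is nothing to prove, so I assume the contrary and fix an arbitrary $\{\Omega_{\alpha}\}^{k}_{\alpha=1}\in \mathcal{S}_{X_{2}}(\eta_{1},\dots,\eta_{k})$.

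Second, I pull back along $\Phi$: set $\widetilde{\Omega}_{\alpha}:=\Phi^{-1}(\Omega_{\alpha})$ for each $\alpha$. From $\Phi_{\#}\mu_{X_{1}}=\mu_{X_{2}}$ one immediately gets
\begin{equation*}
\mu_{X_{1}}(\widetilde{\Omega}_{\alpha})=\mu_{X_{2}}(\Omega_{\alpha})\geq \eta_{\alpha},
\end{equation*}
so $\{\widetilde{\Omega}_{\alpha}\}^{k}_{\alpha=1}\in \mathcal{S}_{X_{1}}(\eta_{1},\dots,\eta_{k})$. I then compare the two functionals $\mathcal{D}$. For $\alpha\neq\beta$, any $x\in\widetilde{\Omega}_{\alpha}$, $y\in\widetilde{\Omega}_{\beta}$ satisfy $\Phi(x)\in\Omega_{\alpha}$, $\Phi(y)\in\Omega_{\beta}$, and since $\Phi$ is $1$-Lipschitz, $d_{X_{2}}(\Omega_{\alpha},\Omega_{\beta})\leq d_{X_{2}}(\Phi(x),\Phi(y))\leq d_{X_{1}}(x,y)$. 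Taking infimum yields $d_{X_{2}}(\Omega_{\alpha},\Omega_{\beta})\leq d_{X_{1}}(\widetilde{\Omega}_{\alpha},\widetilde{\Omega}_{\beta})$. Similarly, for any $x\in\widetilde{\Omega}_{\alpha}$ and any $z\in\partial X_{1}$, the condition $\Phi(\partial X_{1})\subset\partial X_{2}$ gives $\Phi(z)\in\partial X_{2}$, and the $1$-Lipschitz property of $\Phi$ yields $d_{X_{2}}(\Omega_{\alpha},\partial X_{2})\leq d_{X_{2}}(\Phi(x),\Phi(z))\leq d_{X_{1}}(x,z)$. Taking infimum gives $d_{X_{2}}(\Omega_{\alpha},\partial X_{2})\leq d_{X_{1}}(\widetilde{\Omega}_{\alpha},\partial X_{1})$.

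Combining these two pointwise comparisons inside the definition of $\mathcal{D}$, I obtain
\begin{equation*}
\mathcal{D}_{X_{2}}\bigl(\{\Omega_{\alpha}\}^{k}_{\alpha=1}\bigl)\leq \mathcal{D}_{X_{1}}\bigl(\{\widetilde{\Omega}_{\alpha}\}^{k}_{\alpha=1}\bigl)\leq \DS(X_{1};\eta_{1},\dots,\eta_{k}),
\end{equation*}
and taking the supremum over $\{\Omega_{\alpha}\}\in\mathcal{S}_{X_{2}}(\eta_{1},\dots,\eta_{k})$ finishes the proof. I do not expect any real obstacle: the argument is parallel to that of Lemma 3.3, and the only delicate point is verifying that the boundary-distance term also transfers correctly, which is precisely where the structural requirement $\Phi(\partial X_{1})\subset\partial X_{2}$ in the definition of dominance is invoked.
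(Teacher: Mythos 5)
Your proof is correct and follows essentially the same pullback argument as the paper: preimages $\Phi^{-1}(\Omega_{\alpha})$ inherit the measure lower bounds via $\Phi_{\#}\mu_{X_{1}}=\mu_{X_{2}}$, and the $1$-Lipschitz property together with $\Phi(\partial X_{1})\subset \partial X_{2}$ transfers both the mutual distances and the boundary distances. The only (harmless) difference is that you take the infimum over all $z\in\partial X_{1}$ directly, whereas the paper first invokes properness of $X_{1}$ to select a nearest boundary point; your version is marginally cleaner and equally valid.
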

\begin{proof}
We may assume $\mathcal{S}_{X_{2}}(\eta_{1},\dots,\eta_{k})\neq \emptyset$.
Fix a sequence $\{\Omega_{\alpha}\}^{k}_{\alpha=1} \in \mathcal{S}_{X_{2}}(\eta_{1},\dots,\eta_{k})$.
There exists a $1$-Lipschitz map $\Phi:X_{1}\to X_{2}$ such that (\ref{eq:dominance}).
Since $\Phi$ is $1$-Lipschitz,
for all $\alpha,\beta=1,\dots,k$,
we see
\begin{equation}\label{eq:monotonicity of distances between Borel subsets}
d_{X_{2}}(\Omega_{\alpha},\Omega_{\beta})\leq d_{X_{1}}(\Phi^{-1}(\Omega_{\alpha}),\Phi^{-1}(\Omega_{\beta})).
\end{equation}
Furthermore,
by $\Phi_{\#}\mu_{X_{1}}=\mu_{X_{2}}$,
for every $\alpha=1,\dots,k$,
\begin{equation}\label{eq:lower bound of volume}
\mu_{X_{1}}(\Phi^{-1}(\Omega_{\alpha}))=\mu_{X_{2}}(\Omega_{\alpha}) \geq \eta_{\alpha}.
\end{equation}

We show that
for every $\alpha=1,\dots,k$,
\begin{equation}\label{eq:distance monotonicity}
d_{X_{2}}(\Omega_{\alpha},\partial X_{2})\leq d_{X_{1}}(\Phi^{-1}(\Omega_{\alpha}),\partial X_{1}).
\end{equation}
Take $x_{1} \in \Phi^{-1}(\Omega_{\alpha})$,
and put $x_{2}:=\Phi(x_{1})\in \Omega_{\alpha}$.
From the properness of $X_{1}$,
there exists $z_{1} \in \partial X_{1}$ such that $d_{X_{1}}(x_{1},z_{1})=d_{X_{1}}(x_{1},\partial X_{1})$.
Put $z_{2}:=\Phi(z_{1})\in \partial X_{2}$.
It follows that
\begin{equation*}
d_{X_{2}}(\Omega_{\alpha},\partial X_{2})\leq d_{X_{2}}(x_{2},z_{2})\leq d_{X_{1}}(x_{1},z_{1})=d_{X_{1}}(x_{1},\partial X_{1}),
\end{equation*}
and
we obtain (\ref{eq:distance monotonicity}).
By combining (\ref{eq:monotonicity of distances between Borel subsets}), (\ref{eq:lower bound of volume}) and (\ref{eq:distance monotonicity}),
\begin{equation*}
\mathcal{D}_{X_{2}}\bigl(\{\Omega_{\alpha}\}^{k}_{\alpha=1}\bigl) \leq \mathcal{D}_{X_{1}}\bigl(\{   \Phi^{-1}(\Omega_{\alpha})   \}^{k}_{\alpha=1}\bigl) \leq \DS(X_{1};\eta_{1},\dots,\eta_{k}).
\end{equation*}
This completes the proof.
\end{proof}
Lemma \ref{lem:monotonicity of Dirichlet separation distance} tells us that
the boundary separation distance is an invariant under the isomorphism introduced in Definition \ref{defi:dominance}.

%%%%%%%%%%%%%%%%%%%%%%%%
\subsection{Relations between invariants}

We present the following relation between our invariants (cf. Proposition 2.26 in \cite{Sh}):
\begin{lem}\label{lem:observable inscribed radius is smaller than Dirichlet separation distance}
Let $X=(X,d_{X},\mu_{X})$ be a metric measure space with boundary.
Then for every $\eta>0$
we have
\begin{equation*}
\OI(X;-\eta)\leq \DS(X;\eta).
\end{equation*}
In particular,
$\OI(X;-\eta)\leq \IR X$.
\end{lem}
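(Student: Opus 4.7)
The plan is to produce, for each competitor $\varphi$ in the supremum defining $\OI(X;-\eta)$, an explicit Borel subset of $I$ witnessing the infimum in $\PI(I_{\varphi};1-\eta)$ with inscribed radius at most $\DS(X;\eta)$. Set $s:=\DS(X;\eta)$. Since $\partial I=\{0\}$ and $\rho_{\partial I}(t)=t$, the natural candidate is $\Omega':=[0,s]$, which has $\IR\Omega'=s$; the task therefore reduces to verifying $m_{I,\varphi}([0,s])\geq 1-\eta$, equivalently $\mu_{X}(A)\leq\eta$ for $A:=\{x\in X:\varphi(x)>s\}$.

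I would argue this by contradiction. Assume $\mu_{X}(A)>\eta$, and observe that $A=\bigcup_{k\geq 1}A_{1/k}$ is an increasing union of the open (hence Borel) sets $A_{\epsilon}:=\{x\in X:\varphi(x)>s+\epsilon\}$. Continuity from below of $\mu_{X}$ then yields some $\epsilon_{0}>0$ with $\mu_{X}(A_{\epsilon_{0}})\geq\eta$. The $1$-Lipschitz property of $\varphi$ combined with $\varphi|_{\partial X}=0$ forces $d_{X}(x,z)\geq\varphi(x)-\varphi(z)=\varphi(x)>s+\epsilon_{0}$ for every $x\in A_{\epsilon_{0}}$ and $z\in\partial X$, so $d_{X}(A_{\epsilon_{0}},\partial X)\geq s+\epsilon_{0}$. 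Since $A_{\epsilon_{0}}$ is an admissible competitor for $\DS(X;\eta)$, this gives $\DS(X;\eta)\geq s+\epsilon_{0}>s$, contradicting the definition of $s$. Taking the supremum over $\varphi$ completes the proof of the first inequality.

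For the secondary statement $\OI(X;-\eta)\leq\IR X$, it suffices to note that $\DS(X;\eta)\leq\IR X$: any nonempty Borel $\Omega\subset X$ satisfies $d_{X}(\Omega,\partial X)\leq\rho_{\partial X}(x)\leq\IR X$ for any chosen $x\in\Omega$. (If $\mathcal{S}_{X}(\eta)=\emptyset$, both invariants vanish by convention and there is nothing to prove.) No substantial obstacle arises; the only delicate move is the use of continuity from below to upgrade the non-strict defining inequality $\varphi>s$ on $A$ to a strict distance gap $s+\epsilon_{0}>s$, which is what produces the contradiction with the supremum in $\DS(X;\eta)$.
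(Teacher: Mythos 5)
Your proof is correct. The underlying idea is the same as the paper's---a Borel set of measure at least $\eta$ whose distance to the boundary exceeds $\DS(X;\eta)$ cannot exist, and superlevel sets of a $1$-Lipschitz $\varphi$ vanishing on $\partial X$ are automatically far from $\partial X$---but you execute it upstream on $X$ rather than downstream on the screen. The paper first notes that $X$ dominates $I_{\varphi}$, invokes the monotonicity $\DS(I_{\varphi};\eta)\leq \DS(X;\eta)$ from Lemma \ref{lem:monotonicity of Dirichlet separation distance}, and then compares $\PI$ and $\DS$ entirely on $I_{\varphi}$ via the quantile $t_{0}:=\inf\{t\mid m_{I,\varphi}((t,\infty))\leq\eta\}$, obtaining $\PI(I_{\varphi};1-\eta)\leq t_{0}\leq \DS(I_{\varphi};\eta)$. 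You instead exhibit $[0,s]$ with $s=\DS(X;\eta)$ directly as a competitor for $\PI(I_{\varphi};1-\eta)$, and verify $m_{I,\varphi}([0,s])\geq 1-\eta$ by a contradiction that tests the sets $A_{\epsilon}=\{\varphi>s+\epsilon\}$ against the definition of $\DS(X;\eta)$ on $X$ itself; continuity from below plays the role that the infimum defining $t_{0}$ plays in the paper. Your version is more self-contained (it does not need the domination/monotonicity machinery), while the paper's version factors the work through lemmas it reuses elsewhere. Your treatment of the secondary claim via $\DS(X;\eta)\leq\IR X$ is also fine.
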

\begin{proof}
We may assume $\eta<1$.
We fix a $1$-Lipschitz function $\varphi:X\to I$ with $\varphi|_{\partial X}=0$.
Let $m_{I,\varphi}$ be the Borel probability measure on the $\varphi$-screen $I_{\varphi}$ defined as (\ref{eq:screen}).
By $m_{I,\varphi}=\varphi_{\#}\mu_{X}$ and $\varphi(\partial X)\subset \partial I$,
the space $X$ dominates $I_{\varphi}$;
in particular,
Lemma \ref{lem:monotonicity of Dirichlet separation distance} yields
\begin{equation}\label{eq:monotonicity of Dirichlet separation distance under the screen}
\DS(I_{\varphi};\eta) \leq \DS(X;\eta).
\end{equation}
We put
\begin{equation*}
t_{0}:=\inf\{\,t\in I \mid m_{I,\varphi}((t,\infty))\leq \eta \,\}.
\end{equation*}
Note that
$m_{I,\varphi}((t_{0},\infty))\leq \eta$ and $m_{I,\varphi}([t_{0},\infty))\geq \eta$.
Since $m_{I,\varphi}([0,t_{0}])\geq 1-\eta$,
we have
\begin{equation}\label{eq:upper partial inscribed radius bound}
\PI(I_{\varphi};1-\eta)\leq \IR [0,t_{0}]= t_{0}.
\end{equation}
On the other hand,
$m_{I,\varphi}([t_{0},\infty))\geq \eta$ leads to
\begin{equation}\label{eq:lower Dirichlet separation distance bound}
t_{0}=d_{I}(\partial I,[t_{0},\infty)) \leq \DS(I_{\varphi};\eta).
\end{equation}
Now,
(\ref{eq:monotonicity of Dirichlet separation distance under the screen}) together with (\ref{eq:upper partial inscribed radius bound}), (\ref{eq:lower Dirichlet separation distance bound}) implies the desired one.
\end{proof}

\begin{rem}
A sequence $\{X_{n}\}$ of metric measure spaces with boundary is said to be \textit{inscribed radius collapsing} if 
$\IR X_{n} \to 0$ as $n\to \infty$.
Yamaguchi and Zhang \cite{YZ} have studied inscribed radius collapsing sequences of manifolds with boundary from the view point of the collapsing theory.
From Lemma \ref{lem:observable inscribed radius is smaller than Dirichlet separation distance},
it follows that
if a sequence of metric measure spaces with boundary is inscribed radius collapsing,
then it is a boundary concentration family introduced in Definition \ref{defi:Dirichlet Levy family}.
\end{rem}

We also possess the following relation (cf. Proposition 2.26 in \cite{Sh}):
\begin{lem}\label{lem:Dirichlet separation distance is smaller than observable inscribed radius}
Let $X=(X,d_{X},\mu_{X})$ be a metric measure space with boundary.
Then for all $\eta,\eta'>0$ with $\eta>\eta'$
we have
\begin{equation*}
\DS(X;\eta) \leq \OI(X;-\eta').
\end{equation*}
\end{lem}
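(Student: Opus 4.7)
The plan is to unwind the definitions: since $\DS(X;\eta)$ is a supremum over Borel subsets $\Omega \subset X$ with $\mu_X(\Omega) \geq \eta$ of the quantity $\mathcal{D}_X(\{\Omega\}) = d_X(\Omega, \partial X)$ (the cross-distance minimum $\min_{\alpha \neq \beta}$ is vacuous in the single-set case), it suffices to show that for every such $\Omega$ we have $d_X(\Omega,\partial X) \leq \OI(X;-\eta')$. If $\mathcal{S}_X(\eta) = \emptyset$ then $\DS(X;\eta)=0$ and there is nothing to prove, so fix an admissible $\Omega$ and set $d := d_X(\Omega,\partial X)$; we may also assume $d>0$.

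The distinguished test function will be the distance from the boundary itself: set $\varphi := \rho_{\partial X}$. By construction $\varphi:X \to I$ is $1$-Lipschitz with $\varphi|_{\partial X}=0$, so $\varphi$ is admissible in the supremum defining $\OI(X;-\eta')$. By definition of $d$ we have $\rho_{\partial X}(x) \geq d$ for every $x \in \Omega$, hence $\Omega \subset \varphi^{-1}([d,\infty))$, which pushes forward to the mass estimate
\begin{equation*}
m_{I,\varphi}\bigl([0,d)\bigr) = \mu_X\bigl(\{\rho_{\partial X} < d\}\bigr) \leq 1 - \mu_X(\Omega) \leq 1-\eta.
\end{equation*}
This is where the strict inequality $\eta > \eta'$ enters: we obtain $m_{I,\varphi}([0,d)) \leq 1-\eta < 1-\eta'$.

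Next I would show $\PI(I_{\varphi};1-\eta') \geq d$. Suppose for contradiction there existed a Borel set $\Omega' \subset I$ with $m_{I,\varphi}(\Omega') \geq 1-\eta'$ and $\IR \Omega' < d$. Since $\partial I = \{0\}$, one has $\IR \Omega' = \sup_{t \in \Omega'} t$, so $\Omega' \subset [0,d)$, and the monotonicity of measure together with the previous display forces $m_{I,\varphi}(\Omega') \leq m_{I,\varphi}([0,d)) \leq 1-\eta < 1-\eta'$, a contradiction. Therefore $d \leq \PI(I_{\varphi};1-\eta')$.

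Finally, by the definition of the observable inscribed radius,
\begin{equation*}
d_X(\Omega,\partial X) = d \leq \PI(I_{\varphi};1-\eta') \leq \OI(X;-\eta').
\end{equation*}
Taking the supremum over all admissible $\Omega$ gives $\DS(X;\eta)\leq \OI(X;-\eta')$. There is no serious obstacle here; the only care needed is to track that the strict inequality $\eta>\eta'$ is exactly what is required to forbid a low-radius Borel set of mass $\geq 1-\eta'$ inside the screen $I_{\varphi}$.
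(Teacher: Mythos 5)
Your proposal is correct and follows essentially the same route as the paper: both use the screen of the distance function $\rho_{\partial X}$ and a mass-counting argument showing that any Borel subset of $I$ of measure at least $1-\eta'$ must reach past $d_{X}(\Omega,\partial X)$, hence $\PI(I_{\rho_{\partial X}};1-\eta')\geq d_{X}(\Omega,\partial X)$. The paper phrases this directly via the nonempty intersection of $J$ with $\overline{\rho_{\partial X}(\Omega)}$, while you argue by contradiction through the interval $[0,d)$; the two are interchangeable.
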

\begin{proof}
We may assume $\DS(X;\eta)>0$.
Fix a Borel subset $\Omega \subset X$ with $\mu_{X}(\Omega)\geq \eta$,
and also fix $J \subset I$ with $m_{I,\rho_{\partial X}}(J) \geq 1-\eta'$.
Then
\begin{equation*}
m_{I,\rho_{\partial X}}(\overline{\rho_{\partial X}(\Omega)})+m_{I,\rho_{\partial X}}(J)\geq \mu_{X}(\Omega)+m_{I,\rho_{\partial X}}(J)\geq \eta+(1-\eta') >1,
\end{equation*}
and hence $\overline{\rho_{\partial X}(\Omega)} \cap J \neq \emptyset$,
where $\overline{\rho_{\partial X}(\Omega)}$ denotes the closure of $\rho_{\partial X}(\Omega)$.
For every $t_{0} \in \overline{\rho_{\partial X}(\Omega)} \cap J$
we see
\begin{align*}
\IR J &\geq d_{I}(t_{0},\partial I) \geq d_{I}(\overline{\rho_{\partial X}(\Omega)},\partial I)\\
        &=d_{I}(\rho_{\partial X}(\Omega),\partial I)=\inf_{x\in \Omega} \rho_{\partial X}(x)=d_{X}(\Omega,\partial X).
\end{align*}
This yields
\begin{equation*}
\OI(X;-\eta') \geq \PI(I_{\rho_{\partial X}};1-\eta')\geq \DS(X;\eta).
\end{equation*}
We arrive at the desired assertion.
\end{proof}

By combining Lemmas \ref{lem:observable inscribed radius is smaller than Dirichlet separation distance} and \ref{lem:Dirichlet separation distance is smaller than observable inscribed radius},
and by straightforward argument,
one can conclude the following equivalence:
\begin{prop}\label{prop:characterization of Dirichlet Levy}
Let $\{X_{n}\}$ be a sequence of metric measure spaces with boundary $X_{n}=(X_{n},d_{X_{n}},\mu_{X_{n}})$.
Then the following are equivalent:
\begin{enumerate}\setlength{\itemsep}{+0.7mm}
\item $\{X_{n}\}$ is a boundary concentration family;
\item for every sequence $\{\Omega_{n}\}$ of Borel subsets $\Omega_{n} \subset X_{n}$ satisfying $\liminf_{n\to \infty} \mu_{X_{n}}(\Omega_{n})>0$,
         we have $\lim_{n\to \infty}\, d_{X_{n}}(\Omega_{n},\partial X_{n})=0$;
\item for every $r>0$
         we have $\lim_{n\to \infty}\,\mu_{X_{n}}(B_{r}(\partial X_{n}))=1$.
\end{enumerate}
\end{prop}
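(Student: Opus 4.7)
The plan is to bridge through the boundary separation distance $\DS$. Specifically, from Lemmas \ref{lem:observable inscribed radius is smaller than Dirichlet separation distance} and \ref{lem:Dirichlet separation distance is smaller than observable inscribed radius} I would first extract the following convenient reformulation $(\ast)$ of condition (1): $\{X_n\}$ is a boundary concentration family if and only if $\lim_{n\to\infty}\DS(X_n;\eta)=0$ for every $\eta>0$. One direction is immediate from $\OI(X_n;-\eta)\leq\DS(X_n;\eta)$; for the other, given $\eta>0$, I would pick $\eta'\in(0,\eta)$ and invoke $\DS(X_n;\eta)\leq \OI(X_n;-\eta')$. The remaining implications will then be obtained by unfolding the definition of $\DS$.

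For $(1)\Rightarrow(2)$, given $\{\Omega_n\}$ with $\eta_0:=\liminf_n \mu_{X_n}(\Omega_n)>0$, I would fix any $\eta_1\in(0,\eta_0)$ so that $\mu_{X_n}(\Omega_n)\geq \eta_1$ for large $n$; then directly from the definition of the boundary separation distance one has $d_{X_n}(\Omega_n,\partial X_n) \leq \DS(X_n;\eta_1)$, which tends to zero by $(\ast)$. For $(2)\Rightarrow(3)$, I would argue by contradiction: if $\mu_{X_{n_k}}(B_r(\partial X_{n_k}))\leq 1-\eta$ along a subsequence $\{n_k\}$ for some $r,\eta>0$, then setting $\Omega_{n_k}:=X_{n_k}\setminus B_r(\partial X_{n_k})$ and $\Omega_n:=X_n$ off the subsequence produces a sequence with $\liminf_n \mu_{X_n}(\Omega_n)\geq \eta$ but $d_{X_{n_k}}(\Omega_{n_k},\partial X_{n_k})\geq r$, violating (2).

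Finally, for $(3)\Rightarrow(1)$ I would again work through $(\ast)$: if $\DS(X_n;\eta)\not\to 0$ for some $\eta>0$, I would pass to a subsequence along which there exist Borel subsets $\Omega_{n_k}\subset X_{n_k}$ with $\mu_{X_{n_k}}(\Omega_{n_k})\geq\eta$ and $d_{X_{n_k}}(\Omega_{n_k},\partial X_{n_k})\geq r$ for some fixed $r>0$; since this forces $\Omega_{n_k}\cap B_{r/2}(\partial X_{n_k})=\emptyset$, one obtains $\mu_{X_{n_k}}(B_{r/2}(\partial X_{n_k}))\leq 1-\eta$, contradicting (3) at radius $r/2$. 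The only mild care point is that the supremum in the definition of $\DS$ need not be attained, so I would instead choose the $\Omega_{n_k}$ realizing the relevant lower bound only up to a factor $1/2$, which is harmless. Apart from this, no real obstacle arises: the proposition is essentially bookkeeping once Lemmas \ref{lem:observable inscribed radius is smaller than Dirichlet separation distance} and \ref{lem:Dirichlet separation distance is smaller than observable inscribed radius} are in hand.
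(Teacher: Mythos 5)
Your proposal is correct and follows exactly the route the paper intends: it reduces (1) to the vanishing of $\DS(X_{n};\eta)$ via Lemmas \ref{lem:observable inscribed radius is smaller than Dirichlet separation distance} and \ref{lem:Dirichlet separation distance is smaller than observable inscribed radius}, and then unfolds the definition of the boundary separation distance to get (2) and (3); the paper itself only records this as ``straightforward argument,'' and your write-up supplies those details accurately, including the harmless factor needed because the supremum in $\DS$ need not be attained.
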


Finally,
we observe the following fact for the equality case of Lemma \ref{lem:Dirichlet separation distance is smaller than observable inscribed radius}.
The statement and its proof are informed by Daisuke Kazukawa.
\begin{prop}\label{prop:Kazukawa}
Let $X=(X,d_{X},\mu_{X})$ be a metric measure space with boundary.
If $\supp \mu_{X}=X$,
then for every $\eta>0$
we have
\begin{equation*}
\OI(X;-\eta)=\DS(X;\eta).
\end{equation*}
\end{prop}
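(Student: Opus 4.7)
The inequality $\OI(X;-\eta) \leq \DS(X;\eta)$ is already supplied by Lemma \ref{lem:observable inscribed radius is smaller than Dirichlet separation distance}, so only the reverse inequality needs proof. The plan is to test the supremum defining $\OI(X;-\eta)$ against the single $1$-Lipschitz function $\varphi := \rho_{\partial X}$, and to use the support hypothesis $\supp \mu_{X} = X$ in order to recover the slack $\eta - \eta'$ that was lost in the proof of Lemma \ref{lem:Dirichlet separation distance is smaller than observable inscribed radius}.

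First I would unwind $\PI(I_{\rho_{\partial X}}; 1-\eta)$. Since the screen $I = [0,\infty)$ has boundary $\{0\}$, the inscribed radius of any Borel set $J \subset I$ equals $\sup J$, so an optimal test set may always be taken of the form $J = [0,t]$. Setting
\begin{equation*}
t_{0} := \inf \left\{\, t \geq 0 \mid \mu_{X}(\{\rho_{\partial X} > t\}) \leq \eta \,\right\},
\end{equation*}
one checks that $\PI(I_{\rho_{\partial X}}; 1-\eta) = t_{0}$, and hence $\OI(X;-\eta) \geq t_{0}$.

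Next, fix a Borel subset $\Omega \subset X$ with $\mu_{X}(\Omega) \geq \eta$ and set $r := d_{X}(\Omega, \partial X)$. Since $\Omega \subset \{\rho_{\partial X} \geq r\}$, we have $\mu_{X}(\{\rho_{\partial X} \geq r\}) \geq \eta$. I would argue $r \leq t_{0}$ by contradiction: if $r > t_{0}$, pick $s \in (t_{0}, r)$; the definition of $t_{0}$ gives $\mu_{X}(\{\rho_{\partial X} > s\}) \leq \eta$, while the inclusion $\{\rho_{\partial X} \geq r\} \subset \{\rho_{\partial X} > s\}$ squeezes both measures down to $\eta$ and forces $\mu_{X}(\{s < \rho_{\partial X} < r\}) = 0$. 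But the set $\{s < \rho_{\partial X} < r\}$ is open (as $\rho_{\partial X}$ is $1$-Lipschitz, hence continuous) and non-empty: $\rho_{\partial X}(X)$ is a connected subset of $[0,\infty)$ that contains $0$ and also contains $\rho_{\partial X}(x) \geq r$ for any $x \in \Omega$, so it includes the whole interval $[0,r] \supset (s,r)$. The support hypothesis then yields $\mu_{X}(\{s < \rho_{\partial X} < r\}) > 0$, a contradiction. Passing to the supremum over $\Omega$ delivers $\DS(X;\eta) \leq t_{0} \leq \OI(X;-\eta)$.

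The main obstacle is precisely the non-emptiness of $\{s < \rho_{\partial X} < r\}$, where the connectedness of $X$ enters through the intermediate-value property of the continuous function $\rho_{\partial X}$; once this is in hand, the support hypothesis converts a measure-zero consequence into an outright contradiction. Everything else is bookkeeping parallel to the proof of Lemma \ref{lem:Dirichlet separation distance is smaller than observable inscribed radius}, with the strict inequality $\eta > \eta'$ replaced by the topological condition $\supp \mu_{X} = X$.
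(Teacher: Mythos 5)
Your proof is correct, but it follows a genuinely different route from the paper's. The paper fixes a near-optimal $\Omega$ (with $\mu_{X}(\Omega)\geq\eta$ and $d_{X}(\Omega,\partial X)>\DS(X;\eta)-\epsilon$) and builds a test function tailored to it, namely $\varphi(x)=\max\{d_{X}(\Omega,\partial X)-d_{X}(x,\Omega),0\}$; this $\varphi$ pushes all of $\Omega$ onto an atom of mass at least $\eta$ at the top endpoint $d_{X}(\Omega,\partial X)$ of the screen, and $\supp \mu_{X}=X$ forces $\supp m_{I,\varphi}=[0,d_{X}(\Omega,\partial X)]$, so no Borel set of measure at least $1-\eta$ can stay strictly below the top, giving $\PI(I_{\varphi};1-\eta)\geq d_{X}(\Omega,\partial X)$ and then an $\epsilon\to 0$ limit. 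You instead work with the single canonical function $\rho_{\partial X}$, identify $\PI(I_{\rho_{\partial X}};1-\eta)$ with the quantile $t_{0}$, and bound every $d_{X}(\Omega,\partial X)$ by $t_{0}$ through the same mechanism (connectedness of $X$ gives the intermediate value property of $\rho_{\partial X}$, and full support turns the nonempty open level band $\{s<\rho_{\partial X}<r\}$ into a set of positive measure). Both arguments hinge on exactly these two facts, but yours has the extra payoff that the supremum defining $\OI(X;-\eta)$ is attained at $\varphi=\rho_{\partial X}$, and it dispenses with the $\epsilon$-limit. Two small points you should make explicit: the only direction of the identity $\PI(I_{\rho_{\partial X}};1-\eta)=t_{0}$ that you actually use is $\sup J\geq t_{0}$ for every Borel $J\subset I$ with $m_{I,\rho_{\partial X}}(J)\geq 1-\eta$, which follows from $J\subset[0,\sup J]$ together with the fact that $\{t\mid \mu_{X}(\{\rho_{\partial X}>t\})\leq\eta\}$ is an up-set whose infimum is $t_{0}$; and the computation $\mu_{X}(\{s<\rho_{\partial X}<r\})=\mu_{X}(\{\rho_{\partial X}>s\})-\mu_{X}(\{\rho_{\partial X}\geq r\})=0$ deserves one line, since it is where the two squeezed measures are compared.
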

\begin{proof}
Lemma \ref{lem:observable inscribed radius is smaller than Dirichlet separation distance} tells us that
the left hand side is at most the right hand side.
We verify the opposite.
We may assume $\DS(X;\eta)>0$.
Fix a sufficiently small $\epsilon>0$.
Then there exists a Borel subset $\Omega \subset X$ with $\mu_{X}(\Omega) \geq \eta$ such that $d_{X}(\Omega,\partial X) >\DS(X;\eta)-\epsilon$.
Let us define a $1$-Lipschitz function $\varphi:X\to I$ by
\begin{equation*}
\varphi(x):=\max \left\{d_{X}(\Omega,\partial X)-d_{X}(x,\Omega),0\right\}.
\end{equation*}
Notice that
$\varphi|_{\partial X}=0,\,\varphi|_{\Omega}=d_{X}(\Omega,\partial X)$ and
\begin{equation*}
m_{I,\varphi}(\{ d_{X}(\Omega,\partial X)\})\geq \mu_{X}(\Omega) \geq \eta.
\end{equation*}
Furthermore,
$\supp \mu_{X}=X$ implies $\supp m_{I,\varphi}=[0,d_{X}(\Omega,\partial X)]$.

We now show
\begin{equation}\label{eq:Kazukawa}
\PI(I_{\varphi};1-\eta) \geq d_{X}(\Omega,X).
\end{equation}
The proof is by contradiction.
Suppose that
there exists a Borel subset $J \subset I$ with $m_{I,\varphi}(J)\geq 1-\eta$ such that $\IR J<d_{X}(\Omega,\partial X)$.
We put $\hat{J}:=(\IR J, d_{X}(\Omega,\partial X))$.
Then we have
\begin{align*}
1&   =   m_{I,\varphi}([0,\IR J])+m_{I,\varphi}(\hat{J})+m_{I,\varphi}(\{ d_{X}(\Omega,\partial X)\})\\
  &\geq m_{I,\varphi}(J)+m_{I,\varphi}(\hat{J})+\eta \geq 1+m_{I,\varphi}(\hat{J}),
\end{align*}
and hence $m_{I,\varphi}(\hat{J})=0$.
This contradicts $\supp m_{I,\varphi}=[0,d_{X}(\Omega,\partial X)]$.
From (\ref{eq:Kazukawa})
it follows that
\begin{equation*}
\DS(X;\eta)-\epsilon<d_{X}(\Omega,\partial X) \leq \OI(X;-\eta).
\end{equation*}
By letting $\epsilon \to 0$,
we complete the proof.
\end{proof}
\section{Dirichlet eigenvalues}\label{sec:Dirichlet eigenvalues}
In what follows,
let $(M,d_{M},m_{M,f})$ denote an $n$-dimensional weighted Riemannian manifold with boundary defined as (\ref{eq:weighted volume measure}) such that $m_{M,f}$ is a Borel probability measure.
We study the metric measure space with boundary $(M,f)$ defined as (\ref{eq:weighted Riemannian manifold}).

%%%%%%%%%%%%%%%%%%%%%%%%
\subsection{Boundary separation distances and Dirichlet eigenvalues}
Let us show the following relation between the boundary separation distance and the Dirichlet eigenvalue:
\begin{lem}\label{lem:Dirichlet eigenvalue and Dirichlet separation distance}
Let $M$ be compact.
Then for all $\eta_{1},\dots,\eta_{k}>0$
we have
\begin{equation*}
\DS\left((M,f);\eta_{1},\dots,\eta_{k}\right)\leq \frac{2}{\sqrt{\nu_{f,k}(M)\, \min_{\alpha=1,\dots,k} \eta_{\alpha}}},
\end{equation*}
where $\nu_{f,k}(M)$ is the $k$-th Dirichlet eigenvalue of $\Delta_{f}$ defined as $(\ref{eq:notation of Dirichlet eigenvalue})$.
\end{lem}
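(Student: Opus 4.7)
The plan is to bound $\DS((M,f);\eta_1,\dots,\eta_k)$ via the variational characterization $(\ref{eq:min-max principle})$ of $\nu_{f,k}(M)$. We may assume $\mathcal{S}_{(M,f)}(\eta_1,\dots,\eta_k)$ is non-empty and that $D := \DS((M,f);\eta_1,\dots,\eta_k) > 0$, otherwise the inequality is trivial. Fix any small $\epsilon > 0$ and pick $\{\Omega_\alpha\}_{\alpha=1}^k \in \mathcal{S}_{(M,f)}(\eta_1,\dots,\eta_k)$ with $\mathcal{D}_{(M,f)}(\{\Omega_\alpha\}) \geq D - \epsilon$, so that the $\Omega_\alpha$ are pairwise at distance $\geq D - \epsilon$ from each other and from $\bm$.

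Next, I would build $k$ disjointly supported Dirichlet test functions. For a parameter $r \in (0, (D-\epsilon)/2)$, set
\begin{equation*}
\phi_\alpha(x) := \max\!\left\{1 - \tfrac{1}{r}\, d_M(x,\Omega_\alpha),\,0\right\}, \qquad \alpha=1,\dots,k.
\end{equation*}
Then $\phi_\alpha$ is $(1/r)$-Lipschitz, equals $1$ on $\Omega_\alpha$, and is supported in the closed $r$-neighborhood $B_r(\Omega_\alpha)$. Because $r < (D-\epsilon)/2 \leq d_M(\Omega_\alpha,\Omega_\beta)/2$ and $r < d_M(\Omega_\alpha,\bm)$, the supports $B_r(\Omega_\alpha)$ are pairwise disjoint and disjoint from $\bm$. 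Hence each $\phi_\alpha$ lies in $H^1_0(M,m_{M,f})$ and the $\phi_\alpha$ are $L^2(m_{M,f})$-orthogonal together with their gradients; in particular they span a $k$-dimensional subspace $L \subset H^1_0(M,m_{M,f})$.

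Now I would estimate the Rayleigh quotient. For any $\phi = \sum_\alpha c_\alpha \phi_\alpha \in L \setminus \{0\}$, the disjoint-support property gives
\begin{equation*}
R_f(\phi) = \frac{\sum_\alpha c_\alpha^2 \int_M \|\nabla \phi_\alpha\|^2\,dm_{M,f}}{\sum_\alpha c_\alpha^2 \int_M \phi_\alpha^2\,dm_{M,f}} \leq \max_{\alpha=1,\dots,k} R_f(\phi_\alpha).
\end{equation*}
For each $\alpha$, using $\|\nabla \phi_\alpha\| \leq 1/r$ a.e., $\phi_\alpha \leq 1$, $m_{M,f}(M) = 1$, and $\phi_\alpha \equiv 1$ on $\Omega_\alpha$, one gets
\begin{equation*}
\int_M \|\nabla \phi_\alpha\|^2\,dm_{M,f} \leq \frac{1}{r^2}, \qquad \int_M \phi_\alpha^2\,dm_{M,f} \geq m_{M,f}(\Omega_\alpha) \geq \eta_\alpha,
\end{equation*}
so $R_f(\phi_\alpha) \leq 1/(r^2 \eta_\alpha) \leq 1/(r^2 \min_\alpha \eta_\alpha)$.

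Finally, by $(\ref{eq:min-max principle})$ applied to $L$, I obtain
\begin{equation*}
\nu_{f,k}(M) \leq \frac{1}{r^2 \min_\alpha \eta_\alpha}.
\end{equation*}
Letting $r \nearrow (D-\epsilon)/2$ and then $\epsilon \to 0$ yields $\nu_{f,k}(M) \leq 4/(D^2 \min_\alpha \eta_\alpha)$, which rearranges to the claim. The construction is essentially routine; the only point requiring care is ensuring that the cutoff radius $r$ is chosen strictly smaller than $(D-\epsilon)/2$ so that the supports are genuinely disjoint and separated from $\bm$, justifying the split of both numerator and denominator of $R_f(\phi)$ across the $\alpha$'s.
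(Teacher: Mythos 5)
Your proof is correct and follows essentially the same route as the paper's: both construct $k$ disjointly supported Lipschitz cutoff functions around the sets $\Omega_{\alpha}$, check that they vanish near $\bm$ and hence lie in $H^{1}_{0}(M,m_{M,f})$, bound the Rayleigh quotient on their span by $\frac{1}{\min_{\alpha}\eta_{\alpha}}\bigl(\frac{2}{S-\epsilon}\bigr)^{2}$, and conclude via the min-max principle $(\ref{eq:min-max principle})$. The only difference is the cosmetic one of introducing an auxiliary cutoff radius $r<(D-\epsilon)/2$ and letting $r\nearrow(D-\epsilon)/2$, where the paper fixes the radius at $S_{\epsilon}/2$ from the outset.
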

\begin{proof}
We set $S:=\DS((M,f);\eta_{1},\dots,\eta_{k})$.
We may assume $S>0$.
Let us fix a sufficiently small $\epsilon>0$.
There exists a sequence $\{\Omega_{\alpha}\}^{k}_{\alpha=1} \in \mathcal{S}_{(M,f)}(\eta_{1},\dots,\eta_{k})$ such that $S-\epsilon<\mathcal{D}_{(M,f)}\bigl(\{\Omega_{\alpha}\}^{k}_{\alpha=1}\bigl)$.
Put $S_{\epsilon}:=S-\epsilon$.
For each $\alpha$,
we define a Lipschitz function $\phi_{\alpha}:M\to \mathbb{R}$ by
\begin{equation*}
\phi_{\alpha}(x):=\max \left\{1-\frac{2}{S_{\epsilon}}\,d_{M}(x,\Omega_{\alpha}),0\right\}.
\end{equation*}
Notice that
the support of $\phi_{\alpha}$ coincides with $B_{S_{\epsilon}/2}(\Omega_{\alpha})$.
Furthermore,
the following properties hold:
\begin{enumerate}\setlength{\itemsep}{+1.5mm}
\item $\phi_{\alpha} \equiv 0$ on $B_{S_{\epsilon}/2}(\bm)$; \label{item:vanish on boundary}
\item $\phi_{\alpha} \equiv 0$ on $B_{S_{\epsilon}/2}(\Omega_{\beta})$ for every $\beta=1,\dots,k$ with $\beta \neq \alpha$; \label{item:vanish on another domain}
\item $\phi_{\alpha} \equiv 1$ on $\Omega_{\alpha}$; \label{item:identity}
\item $\Vert \nabla \phi_{\alpha} \Vert \leq 2/S_{\epsilon}$ $m_{M,f}$-almost everywhere on $M$. \label{item:gradient estimate}
\end{enumerate}
By (\ref{item:vanish on boundary}),
$\phi_{\alpha}$ belongs to the Sobolev space $H^{1}_{0}(M,m_{M,f})$.
By (\ref{item:vanish on another domain}),
the functions $\phi_{1},\dots,\phi_{k}$ are orthogonal to each other in $H^{1}_{0}(M,m_{M,f})$.
Let $L_{0}$ be the $k$-dimensional subspace of $H^{1}_{0}(M,m_{M,f})$ spanned by $\phi_{1},\dots,\phi_{k}$.
From (\ref{item:identity}) and (\ref{item:gradient estimate}),
for every $\phi \in L_{0}\setminus \{0\}$
we deduce
\begin{align*}
\int_{M}\,\phi^{2}\,dm_{M,f}&=\sum^{k}_{\alpha=1}\,c^{2}_{\alpha}\,\int_{M}\,\phi^{2}_{\alpha}\,dm_{M,f} \geq \sum^{k}_{\alpha=1}\,c^{2}_{\alpha}\, \eta_{\alpha} \geq \min_{\alpha} \eta_{\alpha}\,\sum^{k}_{\alpha=1}\,c^{2}_{\alpha} ,\\
\int_{M}\,\Vert \nabla \phi \Vert^{2}\,dm_{M,f}&=\sum^{k}_{\alpha=1}\,c^{2}_{\alpha}\,\int_{M}\, \Vert \nabla \phi_{\alpha} \Vert^{2}\,dm_{M,f} \leq \left(\frac{2}{S_{\epsilon}}\right)^{2} \sum^{k}_{\alpha=1}\,c^{2}_{\alpha},
\end{align*}
where $c_{1},\dots,c_{k}$ are determined by $\phi=\sum^{k}_{\alpha=1}\,c_{\alpha} \phi_{\alpha}$.
Hence
we have
\begin{equation*}
R_{f}(\phi)=\frac{\int_{M} \,\Vert \nabla \phi \Vert^{2} \,dm_{M,f}}{\int_{M} \,\phi^{2} \,dm_{M,f}} \leq \frac{1}{\min_{\alpha} \eta_{\alpha}}\left(\frac{2}{S_{\epsilon}}\right)^{2},
\end{equation*}
where $R_{f}(\phi)$ is the Rayleigh quotient of $\phi$ defined as (\ref{eq:Rayleigh quotient}).
From the min-max principle (\ref{eq:min-max principle})
we derive
\begin{equation*}
\nu_{f,k}(M) \leq \sup_{\phi \in L_{0}\setminus \{0\}}\,R_{f}(\phi) \leq \frac{1}{\min_{\alpha} \eta_{\alpha}}\left(\frac{2}{S_{\epsilon}}\right)^{2}.
\end{equation*}
Letting $\epsilon \to 0$,
we conclude the inequality.
\end{proof}

\begin{rem}
In the forthcoming paper \cite{FSa},
Funano and the author prove the following refined estimate for $k=1$ (see Theorem 2.3 in \cite{FSa}):
For every $\eta>0$
we have
\begin{equation*}
\DS\left((M,f);\eta\right)\leq \frac{1}{\sqrt{\nu_{f,1}(M)}} \,\log \frac{e}{\eta}.
\end{equation*}
\end{rem}

\begin{rem}
Colbois and Savo \cite{CS} have shown a similar estimate for the $k$-th closed eigenvalue of the Laplacian (see Lemma 5 in \cite{CS}). 
\end{rem}

\begin{rem}
Chung, Grigor'yan and Yau \cite{CGY} have estimated the $k$-th closed eigenvalue and Robin eigenvalue of the Laplacian in terms of the separation distance (see Theorem 1.1 in \cite{CGY}).
By applying the same argument in \cite{CGY} to our setting,
we see the following estimate for $\nu_{f,k}(M)$:
Suppose that $M$ is compact.
Then for every integer $k>2$,
and for every sequence $\{\Omega_{\alpha}\}^{k}_{\alpha=1}$ of Borel subsets with $\min_{\alpha \neq \beta} d_{M}(\Omega_{\alpha},\Omega_{\beta})\geq D$,
\begin{equation*}
\nu_{f,k}(M)-\nu_{f,1}(M) \leq \frac{1}{D^{2}}\,\max_{\alpha \neq \beta} \,\left( \log\,\frac{4}{\int_{\Omega_{\alpha}} \,\phi^{2}_{f,1}\,dm_{M,f} \, \int_{\Omega_{\beta}} \,\phi^{2}_{f,1}\,dm_{M,f}   }  \right)^{2},
\end{equation*}
where $\phi_{f,1}$ is an $L^{2}$-normalized eigenfunction for $\nu_{f,1}(M)$.
\end{rem}

%%%%%%%%%%%%%%%%%%%%%%%%
\subsection{Observable inscribed radii and Dirichlet eigenvalues}
Now,
Lemma \ref{lem:Dirichlet eigenvalue and Dirichlet separation distance} together with Lemma \ref{lem:observable inscribed radius is smaller than Dirichlet separation distance}
leads us to the following relation between the observable inscribed radius and the Dirichlet eigenvalue:
\begin{prop}\label{prop:logarithmic Dirichlet eigenvalue and observable inscribed radius}
Let $M$ be compact.
Then for every $\eta>0$,
\begin{equation*}
\OI((M,f);-\eta)\leq \frac{2}{\sqrt{\nu_{f,1}(M)\, \eta}}.
\end{equation*}
\end{prop}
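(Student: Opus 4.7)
The plan is to combine the two preceding lemmas mechanically, since the statement is essentially their composition in the case $k=1$. Concretely, applying Lemma \ref{lem:observable inscribed radius is smaller than Dirichlet separation distance} to the metric measure space with boundary $(M,f)$ gives the inequality
\begin{equation*}
\OI((M,f);-\eta) \leq \DS((M,f);\eta)
\end{equation*}
for every $\eta>0$. This step uses nothing beyond the general relationship between observable inscribed radii and boundary separation distances, and so it requires no additional hypothesis on $M$.

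Next, I would specialize Lemma \ref{lem:Dirichlet eigenvalue and Dirichlet separation distance} to the case $k=1$ with the single parameter $\eta_{1}=\eta$. Since $M$ is assumed compact, the lemma applies and yields
\begin{equation*}
\DS((M,f);\eta) \leq \frac{2}{\sqrt{\nu_{f,1}(M)\,\eta}}.
\end{equation*}
Chaining the two displayed inequalities produces exactly the bound claimed in the proposition.

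There is no genuine obstacle in this argument: the two ingredients have already been established in the paper, and the proof consists solely in observing that one quantity controls the next. The only point worth double-checking is that the minimum $\min_{\alpha=1,\dots,k}\eta_{\alpha}$ appearing in Lemma \ref{lem:Dirichlet eigenvalue and Dirichlet separation distance} collapses to $\eta$ when $k=1$, so that the upper bound takes the stated form $2/\sqrt{\nu_{f,1}(M)\,\eta}$ without any extra factor.
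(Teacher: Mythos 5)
Your proposal is correct and coincides with the paper's own argument: the proposition is stated there as an immediate consequence of Lemma \ref{lem:observable inscribed radius is smaller than Dirichlet separation distance} combined with Lemma \ref{lem:Dirichlet eigenvalue and Dirichlet separation distance} specialized to $k=1$, $\eta_{1}=\eta$. Nothing further is needed.
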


\begin{rem}
Under the curvature condition (\ref{eq:finite dimensional curvature bound}) and $\IR M \leq D$,
the author \cite{S1} has provided a lower bound of $\nu_{f,1}(M)$ depending only on $\kappa,\,\lambda,\,N$ and $D$ (see Theorem 1.6 in \cite{S1} and Remark \ref{rem:remark on the curvature assumption},
and also pioneering works of Li-Yau \cite{LY} and Kasue \cite{K2}).
Combining Proposition \ref{prop:logarithmic Dirichlet eigenvalue and observable inscribed radius} with the lower estimate tells us that
we have an upper bound of $\OI((M,f);-\eta)$ depending only on $\kappa,\,\lambda,\,N,\,D$ and $\eta$.

Similarly,
under the condition (\ref{eq:one dimensional curvature bound}) for $\kappa$ and $\lambda$ satisfying the convex-ball-condition,
and under $f\leq (n-1)\delta$,
the author \cite{S2} has obtained a lower bound of $\nu_{f,1}(M)$ depending only on $n,\,\kappa,\,\lambda$ and $\delta$ (see Theorem 8.5 in \cite{S2}).
In virtue of Proposition \ref{prop:logarithmic Dirichlet eigenvalue and observable inscribed radius},
we possess an upper bound of $\OI((M,f);-\eta)$ depending only on $n,\,\kappa,\,\lambda,\,\delta$ and $\eta$.
\end{rem}

Proposition \ref{prop:logarithmic Dirichlet eigenvalue and observable inscribed radius} enables us to prove Theorem \ref{thm:Dirichlet eigenvalue and Dirichlet Levy family}.
\begin{proof}[Proof of Theorem \ref{thm:Dirichlet eigenvalue and Dirichlet Levy family}]
Let $\{(M_{n},f_{n})\}$ be a sequence of compact metric measure spaces with boundary defined as $(\ref{eq:weighted Riemannian manifold})$.
By Proposition \ref{prop:logarithmic Dirichlet eigenvalue and observable inscribed radius},
if $\nu_{f_{n},1}(M_{n})\to \infty$ as $n\to \infty$,
then $\OI((M_{n},f_{n});-\eta)\to 0$.
Hence,
$\{(M_{n},f_{n})\}$ is a boundary concentration family.
\end{proof}

We also derive the following from Proposition \ref{prop:logarithmic Dirichlet eigenvalue and observable inscribed radius} and Theorem \ref{thm:universal inequality}:
\begin{thm}\label{thm:higer order Dirichlet eigenvalue and observable inscribed radius}
Let $M$ be compact.
If $\ric^{\infty}_{f,M}\geq 0$ and $H_{f,\bm}\geq 0$,
then there is a universal constant $C>0$ such that
for all $k\geq 1$ and $\eta>0$,
\begin{equation*}
\OI((M,f);-\eta)\leq \frac{C\,k}{\sqrt{\nu_{f,k}(M)\, \eta}}.
\end{equation*}
\end{thm}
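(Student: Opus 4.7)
The plan is a direct combination of the two ingredients signaled just before the statement, so the work is essentially just tracking constants. First I would invoke Proposition \ref{prop:logarithmic Dirichlet eigenvalue and observable inscribed radius}, applied to the compact weighted manifold $(M,f)$ under the hypotheses $\ric^{\infty}_{f,M}\geq 0$ and $H_{f,\bm}\geq 0$, to obtain
\begin{equation*}
\OI((M,f);-\eta)\leq \frac{2}{\sqrt{\nu_{f,1}(M)\, \eta}}
\end{equation*}
for each $\eta>0$. This already has the right $\eta$-dependence and the right measure-theoretic content; what remains is to replace $\nu_{f,1}(M)$ on the right-hand side by $\nu_{f,k}(M)$ at the cost of a factor linear in $k$.

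Next I would apply Theorem \ref{thm:universal inequality}, which under the same curvature and mean-curvature hypotheses yields a universal constant $C_{0}>0$ such that $\nu_{f,k}(M)\leq C_{0}\, k^{2}\,\nu_{f,1}(M)$ for all $k\geq 1$. Rearranging this gives the lower bound
\begin{equation*}
\nu_{f,1}(M)\geq \frac{\nu_{f,k}(M)}{C_{0}\, k^{2}},
\end{equation*}
which upon substitution into the previous inequality produces
\begin{equation*}
\OI((M,f);-\eta)\leq \frac{2\sqrt{C_{0}}\,k}{\sqrt{\nu_{f,k}(M)\,\eta}}.
\end{equation*}
Setting $C:=2\sqrt{C_{0}}$ yields the claimed estimate, with $C$ universal because $C_{0}$ is. There is no real obstacle here: both inputs are quoted exactly in the form needed, and the proof is essentially a one-line algebraic manipulation of the two inequalities. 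The only thing to verify is that both tools are applied under the same curvature hypotheses, which is the case, since Theorem \ref{thm:universal inequality} requires $\ric^{\infty}_{f,M}\geq 0$ and $H_{f,\bm}\geq 0$ while Proposition \ref{prop:logarithmic Dirichlet eigenvalue and observable inscribed radius} has no curvature assumption at all.
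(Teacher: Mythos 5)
Your proposal is correct and is exactly the paper's argument: the theorem is stated as a direct consequence of Proposition \ref{prop:logarithmic Dirichlet eigenvalue and observable inscribed radius} and Theorem \ref{thm:universal inequality}, and your substitution of $\nu_{f,1}(M)\geq \nu_{f,k}(M)/(C_{0}k^{2})$ into the bound $2/\sqrt{\nu_{f,1}(M)\,\eta}$ is precisely the intended one-line combination. The constant tracking and the check of hypotheses are both fine.
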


Let us give a proof of Theorem \ref{thm:higher order Dirichlet eigenvalue and Dirichlet Levy family} by using Theorem \ref{thm:higer order Dirichlet eigenvalue and observable inscribed radius}.
\begin{proof}[Proof of Theorem \ref{thm:higher order Dirichlet eigenvalue and Dirichlet Levy family}]
Let $\{(M_{n},f_{n})\}$ denote a sequence of compact metric measure spaces with boundary defined as $(\ref{eq:weighted Riemannian manifold})$.
Assume that
$\ric^{\infty}_{f_{n},M_{n}}\geq 0$ and $H_{f_{n},\bm_{n}}\geq 0$.
Due to Theorem \ref{thm:higer order Dirichlet eigenvalue and observable inscribed radius},
if $\nu_{f_{n},k}(M_{n})\to \infty$ for $k$,
then $\OI((M_{n},f_{n});-\eta)\to 0$.
We complete the proof.
\end{proof}

\begin{rem}\label{rem:conjecture}
The author wonders whether the following statement of E. Milman type holds (see \cite{Mi1}, \cite{Mi2}, and Theorem 9.46 in \cite{Sh}):
Under the same setting as in Theorems \ref{thm:Dirichlet eigenvalue and Dirichlet Levy family}, \ref{thm:higher order Dirichlet eigenvalue and Dirichlet Levy family},
if $\ric^{\infty}_{f_{n},M_{n}}\geq 0$ and $H_{f_{n},\bm_{n}}\geq 0$,
and if $\{(M_{n},f_{n})\}$ is a boundary concentration family,
then $\nu_{f_{n},1}(M_{n})\to \infty$.
If it is true,
then
in virtue of Theorems \ref{thm:Dirichlet eigenvalue and Dirichlet Levy family}, \ref{thm:higher order Dirichlet eigenvalue and Dirichlet Levy family},
we can say that
under $\ric^{\infty}_{f_{n},M_{n}}\geq 0$ and $H_{f_{n},\bm_{n}}\geq 0$,
the following are equivalent:
\begin{enumerate}\setlength{\itemsep}{+0.7mm}
\item $\{(M_{n},f_{n})\}$ is a boundary concentration family;
\item $\nu_{f_{n},1}(M_{n})\to \infty$ as $n\to \infty$;
\item $\nu_{f_{n},k}(M_{n})\to \infty$ as $n\to \infty$ for some $k\geq 1$.
\end{enumerate}

The author also wonders if
one can extend the above equivalence to a weaker setting $\ric^{N}_{f_{n},M_{n}}\geq 0$ and $H_{f_{n},\bm_{n}}\geq 0$ for $N\in (-\infty,1]$ (see Remark \ref{rem:monotonicity of weighted Ricci curvature}).
Under $\ric^{N}_{f,\bm^{\perp}}\geq 0$ and $H_{f,\bm}\geq 0$ for $N\in (-\infty,1]$,
the author \cite{S1.5} has proved a dimension free inequality
\begin{equation*}
\nu_{f,1}(M)\geq \pi^{2} (2\IR M)^{-2},
\end{equation*}
of Li-Yau, Kasue type,
and a rigidity result for the equality case (see Corollary 6.5 in \cite{S1.5}, and also \cite{LY}, \cite{K2}, and cf. Remarks \ref{rem:Li-Yau dimension free} and \ref{rem:remark on the curvature assumption}).
\end{rem}

\section{Finite dimensional comparisons}\label{sec:Comparisons}

%%%%%%%%%%%%%%%%%%%%%%%%
\subsection{Estimates}
To prove Theorem \ref{thm:main theorem},
we begin with the following:
\begin{lem}\label{lem:key lemma}
Let $\bm$ be compact.
For $N\in [n,\infty)$,
let us assume $\ric^{N}_{f,\bm^{\perp}}\geq (N-1)\kappa$ and $H_{f,\bm} \geq (N-1)\lambda$.
Suppose additionally that
$\IR M\leq D$ for some $D>0$.
For $\eta \in (0,1)$,
let $\Omega \subset M$ be a Borel subset with $m_{M,f}(\Omega)\geq \eta$ and $d_{M}(\Omega,\bm)>0$.
Then we have
\begin{equation*}
s_{N,\kappa,\lambda}(d_{M}(\Omega,\bm)) \leq s_{N,\kappa,\lambda}(D)(1-\eta),
\end{equation*} 
where $s_{N,\kappa,\lambda}$ is the function defined as $(\ref{eq:one dimensional volume growth})$.
\end{lem}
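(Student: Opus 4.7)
The plan is to apply the relative volume comparison for metric neighborhoods of the boundary (Theorem \ref{thm:volume comparison}) with outer radius $R=D$ and inner radius $r$ slightly smaller than $r_{0}:=d_{M}(\Omega,\bm)$. Two preliminary observations make the argument go through. First, since $\IR M\leq D$, every $x\in M$ satisfies $\rho_{\bm}(x)\leq D$, so $B_{D}(\bm)=M$ and $m_{M,f}(B_{D}(\bm))=1$ by the probability normalization. Second, by the definition of $r_{0}$, every $x\in \Omega$ satisfies $\rho_{\bm}(x)\geq r_{0}$, so $\Omega\cap B_{r}(\bm)=\emptyset$ whenever $r<r_{0}$.

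Combining these with $m_{M,f}(\Omega)\geq \eta$ yields
\begin{equation*}
m_{M,f}(B_{r}(\bm)) \leq m_{M,f}(M\setminus \Omega) \leq 1-\eta
\end{equation*}
for every $r\in (0,r_{0})$. Since $r_{0}\leq \IR M\leq D$, Theorem \ref{thm:volume comparison} applies with this $r$ and $R=D$; after rearranging the inequality it provides, one obtains
\begin{equation*}
\frac{s_{N,\kappa,\lambda}(r)}{s_{N,\kappa,\lambda}(D)} \leq \frac{m_{M,f}(B_{r}(\bm))}{m_{M,f}(B_{D}(\bm))} \leq 1-\eta,
\end{equation*}
that is, $s_{N,\kappa,\lambda}(r)\leq s_{N,\kappa,\lambda}(D)(1-\eta)$. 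Letting $r\to r_{0}^{-}$ and using the continuity of $s_{N,\kappa,\lambda}$ at $r_{0}$, which is immediate from its definition as an integral of the bounded function $\bar{s}_{\kappa,\lambda}^{N-1}$, delivers the stated inequality.

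There is no serious obstacle here; the lemma is essentially a clean packaging of Theorem \ref{thm:volume comparison} together with the probability normalization $m_{M,f}(M)=1$. The only points needing a touch of care are the direction in which the volume comparison inequality must be rearranged, and the positivity $m_{M,f}(B_{r}(\bm))>0$ required for that rearrangement, which follows from the tube formula (\ref{eq:integration formula}) for any $r>0$.
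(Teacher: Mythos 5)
Your proof is correct and follows essentially the same route as the paper: use $\IR M\leq D$ to get $m_{M,f}(B_{D}(\bm))=1$, use the disjointness of $\Omega$ from the $r$-neighborhood of $\bm$ to bound $m_{M,f}(B_{r}(\bm))\leq 1-\eta$, and rearrange Theorem \ref{thm:volume comparison}. Your extra care with $r<r_{0}$ and the limit $r\to r_{0}^{-}$ just makes explicit the open-versus-closed neighborhood point that the paper handles by a one-line remark.
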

\begin{proof}
We put $r:=d_{M}(\Omega,\bm)$.
Since the open $r$-neighborhood of $\bm$ and $\Omega$ are mutually disjoint,
we see
\begin{equation*}
\eta \leq m_{M,f}(\Omega)\leq 1-m_{M,f}(B_{r}(\bm)).
\end{equation*}
From $\IR M\leq D$
we derive $M=B_{D}(\bm)$;
in particular,
we have $m_{M,f}(B_{D}(\bm))=1$.
Therefore,
Theorem \ref{thm:volume comparison} leads to
\begin{equation*}
\eta \leq 1-\frac{m_{M,f}(B_{r}(\bm))}{m_{M,f}(B_{D}(\bm))}\leq 1-\frac{s_{N,\kappa,\lambda}(r)}{s_{N,\kappa,\lambda}(D)}.
\end{equation*}
This yields the lemma.
\end{proof}

One can also show the following lemma by using Theorem \ref{thm:twisted volume comparison} instead of Theorem \ref{thm:volume comparison} in the proof of Lemma \ref{lem:key lemma}.
We omit the proof.
\begin{lem}\label{lem:twisted key lemma}
Let $\bm$ be compact.
Assume $\ric^{1}_{f,\bm^{\perp}}\geq (n-1) \,\kappa \,e^{\frac{-4f}{n-1}}$ and $H_{f,\bm}\geq (n-1)\,\lambda\,e^{\frac{-2f}{n-1}}$.
Suppose additionally that
$\IR M\leq D$ and $f\leq (n-1) \delta$ for some $D>0$ and $\delta \in \mathbb{R}$.
We further assume that
one of the following holds:
\begin{enumerate}
\item $\kappa$ and $\lambda$ satisfy the convex-ball-condition;
\item $\kappa \leq 0$ and $\lambda=\sqrt{\vert \kappa \vert}$.
\end{enumerate}
For $\eta \in (0,1)$,
let $\Omega \subset M$ be a Borel subset with $m_{M,f}(\Omega)\geq \eta$ and $d_{M}(\Omega,\bm)>0$.
Then we have
\begin{equation*}
s_{n,\kappa\,e^{-4\delta},\lambda\,e^{-2\delta}}(d_{M}(\Omega,\bm)) \leq s_{n,\kappa\,e^{-4\delta},\lambda\,e^{-2\delta}}(D)(1-\eta).
\end{equation*} 
\end{lem}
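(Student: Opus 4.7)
The plan is to run the same template as the proof of Lemma \ref{lem:key lemma}, substituting Theorem \ref{thm:twisted volume comparison} for Theorem \ref{thm:volume comparison}. The hypotheses are arranged precisely so that this substitution is legal: the curvature condition $\ric^{1}_{f,\bm^{\perp}}\geq (n-1)\kappa\,e^{-4f/(n-1)}$, the mean curvature condition $H_{f,\bm}\geq (n-1)\lambda\,e^{-2f/(n-1)}$, the density bound $f \leq (n-1)\delta$, and the dichotomy on $(\kappa,\lambda)$ are exactly the input of Theorem \ref{thm:twisted volume comparison}.

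First I would set $r := d_{M}(\Omega,\bm)$. Since $r>0$ by assumption, the open $r$-neighborhood of $\bm$ and the set $\Omega$ are disjoint, and the measure inequality
\begin{equation*}
\eta \leq m_{M,f}(\Omega) \leq 1 - m_{M,f}(B_{r}(\bm))
\end{equation*}
follows at once. Next, the hypothesis $\IR M \leq D$ forces $M = B_{D}(\bm)$, so $m_{M,f}(B_{D}(\bm)) = 1$; this lets me rewrite the previous estimate as
\begin{equation*}
\eta \leq 1 - \frac{m_{M,f}(B_{r}(\bm))}{m_{M,f}(B_{D}(\bm))}.
\end{equation*}

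Now I would invoke Theorem \ref{thm:twisted volume comparison} with the pair $(r,D)$ (noting $r \leq D$, which follows from $\Omega \subset M = B_{D}(\bm)$) to obtain
\begin{equation*}
\frac{m_{M,f}(B_{r}(\bm))}{m_{M,f}(B_{D}(\bm))} \leq \frac{s_{n,\kappa\,e^{-4\delta},\lambda\,e^{-2\delta}}(r)}{s_{n,\kappa\,e^{-4\delta},\lambda\,e^{-2\delta}}(D)}.
\end{equation*}
Combining the two displays and rearranging yields
\begin{equation*}
s_{n,\kappa\,e^{-4\delta},\lambda\,e^{-2\delta}}(r) \leq s_{n,\kappa\,e^{-4\delta},\lambda\,e^{-2\delta}}(D)\,(1-\eta),
\end{equation*}
which is the claim.

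There is no genuine obstacle: all the substantive analysis (the weighted Heintze--Karcher/relative volume machinery under the one-dimensional twisted curvature condition) is already packaged in Theorem \ref{thm:twisted volume comparison}, and the present lemma only repeats the clean bookkeeping step that converts a relative volume bound on boundary tubes into a separation/volume growth bound. The only minor point to check is that one of the two alternative conditions on $(\kappa,\lambda)$ is imposed so that Theorem \ref{thm:twisted volume comparison} actually applies, which is guaranteed by assumption.
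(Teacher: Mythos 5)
Your proposal is correct and is exactly the argument the paper intends: the paper omits the proof, stating only that one repeats the proof of Lemma \ref{lem:key lemma} with Theorem \ref{thm:twisted volume comparison} in place of Theorem \ref{thm:volume comparison}, which is precisely what you carry out.
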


Recall that
$s_{\kappa,\lambda}(t)$ is the solution of the equation $\psi''(t)+\kappa\, \psi(t)=0$ with $\psi(0)=1,\,\psi'(0)=-\lambda$.
For $N \in (1,\infty)$,
and $\kappa$ and $\lambda$ satisfying the ball-condition,
we define a function $v_{N,\kappa,\lambda}:[0,\const]\to [0,1]$ by
\begin{equation}\label{eq:ball function}
v_{N,\kappa,\lambda}(r):=\frac{\int^{\const}_{r}\,s^{N-1}_{\kappa,\lambda}(t)\,dt}{\int^{\const}_{0}\,s^{N-1}_{\kappa,\lambda}(t)\,dt}.
\end{equation}

Under the curvature condition (\ref{eq:finite dimensional curvature bound}),
we produce the following:
\begin{thm}\label{thm:weighted main theorem}
Let $\bm$ be compact.
For $N\in [n,\infty)$,
let us assume $\ric^{N}_{f,\bm^{\perp}}\geq(N-1)\kappa$ and $H_{f,\bm}\geq (N-1)\lambda$.
Then for every $\eta \in (0,1]$
the following hold:
\begin{enumerate}
\item if $\kappa$ and $\lambda$ satisfy the ball-condition,
         then
         \begin{equation*}
         \OI((M,f);-\eta) \leq v^{-1}_{N,\kappa,\lambda}(\eta);
         \end{equation*}
\item if $\kappa<0$ and $\lambda=\sqrt{\vert \kappa \vert}$,
         then
         \begin{equation*}
         \OI((M,f);-\eta) \leq \frac{1}{(N-1)\lambda}\,\log \frac{1}{\eta}.
         \end{equation*}
\end{enumerate}
\end{thm}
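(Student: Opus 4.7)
The plan is to reduce the statement to bounding the boundary separation distance $\DS((M,f);\eta)$, which by Lemma \ref{lem:observable inscribed radius is smaller than Dirichlet separation distance} satisfies $\OI((M,f);-\eta)\leq \DS((M,f);\eta)$. Unwinding Definition \ref{defi:Dirichlet separation distance} for $k=1$, this amounts to showing
\begin{equation*}
d_{M}(\Omega,\bm)\leq \mathrm{RHS}
\end{equation*}
uniformly over all Borel subsets $\Omega \subset M$ with $m_{M,f}(\Omega)\geq \eta$ (and, trivially, with $d_{M}(\Omega,\bm)>0$). The bound for $\eta=1$ in (1) and the empty-sup case in (2) are trivial; throughout I fix such an $\Omega$ and write $r:=d_{M}(\Omega,\bm)$.

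For case (1), the ball-condition together with Theorem \ref{thm:inscribed radius comparison} gives $\IR M \leq \const <\infty$. Hence Lemma \ref{lem:key lemma} applies with $D=\const$ and yields
\begin{equation*}
s_{N,\kappa,\lambda}(r)\leq s_{N,\kappa,\lambda}(\const)(1-\eta).
\end{equation*}
Since the definition (\ref{eq:ball function}) reads $v_{N,\kappa,\lambda}(r)=1-s_{N,\kappa,\lambda}(r)/s_{N,\kappa,\lambda}(\const)$, this is equivalent to $v_{N,\kappa,\lambda}(r)\geq \eta$, and strict monotonicity of $v_{N,\kappa,\lambda}$ then gives $r\leq v^{-1}_{N,\kappa,\lambda}(\eta)$, which is the desired bound.

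For case (2) the ball-condition fails, so $\IR M$ need not be finite and Lemma \ref{lem:key lemma} cannot be invoked directly. Instead I would apply Theorem \ref{thm:volume comparison} without the a priori diameter bound: since $\Omega$ is disjoint from the open $r$-neighborhood of $\bm$, we have $m_{M,f}(B_{r}(\bm))\leq 1-\eta$, and for every $R\geq r$,
\begin{equation*}
m_{M,f}(B_{R}(\bm))\leq (1-\eta)\,\frac{s_{N,\kappa,\lambda}(R)}{s_{N,\kappa,\lambda}(r)}.
\end{equation*}
In this regime $s_{\kappa,\lambda}(t)=e^{-\lambda t}$ is positive everywhere, so $\bar{C}_{\kappa,\lambda}=\infty$ and $s_{N,\kappa,\lambda}(R)=\bigl(1-e^{-(N-1)\lambda R}\bigr)/((N-1)\lambda)\to 1/((N-1)\lambda)$ as $R\to\infty$. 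Letting $R\to\infty$ and using monotone convergence $m_{M,f}(B_{R}(\bm))\nearrow m_{M,f}(M)=1$, I obtain $(N-1)\lambda\, s_{N,\kappa,\lambda}(r)\leq 1-\eta$, which rearranges to $e^{-(N-1)\lambda r}\geq \eta$, i.e., $r\leq \frac{1}{(N-1)\lambda}\log(1/\eta)$.

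The step requiring the most care is the passage to the limit in case (2): one must use that $m_{M,f}$ is a probability measure on all of $M$ (not just on $B_{R}(\bm)$) and that $\bigcup_{R>0}B_{R}(\bm)=M$ to conclude $m_{M,f}(B_{R}(\bm))\to 1$. The algebraic rewriting in case (1) is routine once the correct definition of $v_{N,\kappa,\lambda}$ is unpacked; the only subtlety is remembering that $v_{N,\kappa,\lambda}$ is a decreasing function so that the inequality flips when inverting.
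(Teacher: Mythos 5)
Your proposal is correct and follows essentially the same route as the paper: reduce to bounding $\DS((M,f);\eta)$ via Lemma \ref{lem:observable inscribed radius is smaller than Dirichlet separation distance}, use Theorem \ref{thm:inscribed radius comparison} and Lemma \ref{lem:key lemma} in case (1), and the relative volume comparison in case (2). Your handling of case (2), taking $R\to\infty$ in Theorem \ref{thm:volume comparison} rather than invoking Lemma \ref{lem:key lemma} with $D=\IR M$, is in fact slightly more careful than the paper's, since $\IR M$ need not be finite there.
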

\begin{proof}
We may assume that $\eta<1$.
By Lemma \ref{lem:observable inscribed radius is smaller than Dirichlet separation distance},
it suffices to prove that
$\DS((M,f);\eta)$ is at most the right hand side of the desired inequality in each case.
We may assume that
$\DS((M,f);\eta)$ is positive.
We fix a Borel subset $\Omega \subset M$ with $m_{M,f}(\Omega)\geq \eta$ and $d_{M}(\Omega,\bm)>0$.

Let us consider the case where
$\kappa$ and $\lambda$ satisfy the ball-condition.
In this case,
Theorem \ref{thm:inscribed radius comparison} implies $\IR M\leq \const$.
By using Lemma \ref{lem:key lemma},
\begin{equation*}
\eta \leq 1-\frac{s_{N,\kappa,\lambda}(d_{M}(\Omega,\bm))}{s_{N,\kappa,\lambda}(\const)}=v_{N,\kappa,\lambda}(d_{M}(\Omega,\bm)).
\end{equation*}
Hence,
$d_{M}(\Omega,\bm) \leq v^{-1}_{N,\kappa,\lambda}(\eta)$.
This proves $\DS((M,f);\eta)\leq v^{-1}_{N,\kappa,\lambda}(\eta)$.
We arrive at the desired inequality.

We next consider the case where
$\kappa<0$ and $\lambda=\sqrt{\vert \kappa \vert}$.
Notice that $s_{\kappa,\lambda}(t)=e^{-\lambda t}$.
In view of Lemma \ref{lem:key lemma},
we see
\begin{equation*}
\eta \leq 1-\frac{s_{N,\kappa,\lambda}(r)}{s_{N,\kappa,\lambda}(\IR M)} \leq 1-\frac{s_{N,\kappa,\lambda}(r)}{\int^{\infty}_{0}\, s^{N-1}_{\kappa,\lambda}(t)\,dt}=e^{-(N-1)\lambda \,r},
\end{equation*}
where we put $r:=d_{M}(\Omega,\bm)$.
In particular,
$d_{M}(\Omega,\bm)$ is smaller than or equal to the right hand side of the desired one.
We obtain
\begin{equation*}
\DS((M,f);\eta)\leq \frac{1}{(N-1)\lambda}\,\log \frac{1}{\eta}.
\end{equation*}
Thus,
we complete the proof.
\end{proof}

We also have the following estimate under the condition (\ref{eq:one dimensional curvature bound}):
\begin{thm}\label{thm:twisted weighted main theorem}
Let $\bm$ be compact.
Assume
$\ric^{1}_{f,\bm^{\perp}}\geq (n-1) \,\kappa \,e^{\frac{-4f}{n-1}}$ and $H_{f,\bm}\geq (n-1)\,\lambda\,e^{\frac{-2f}{n-1}}$.
Suppose additionally that
$f\leq (n-1) \delta$ for some $\delta \in \mathbb{R}$.
Then for every $\eta \in (0,1]$
the following hold:
\begin{enumerate}
\item if $\kappa$ and $\lambda$ satisfy the convex-ball-condition,
         then
         \begin{equation*}
         \OI((M,f);-\eta) \leq v^{-1}_{n,\kappa\,e^{-4\delta},\lambda\,e^{-2\delta}}(\eta);
         \end{equation*}
\item if $\kappa<0$ and $\lambda=\sqrt{\vert \kappa \vert}$,
         then
         \begin{equation*}
         \OI((M,f);-\eta) \leq \frac{1}{(n-1)\lambda\,e^{-2\delta}}\,\log \frac{1}{\eta}.
         \end{equation*}
\end{enumerate}
\end{thm}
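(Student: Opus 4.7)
The plan is to mirror the proof of Theorem \ref{thm:weighted main theorem}, with the $N=1$ twisted ingredients (Theorem \ref{thm:twisted inscribed radius comparison} and Lemma \ref{lem:twisted key lemma}) playing the roles of Theorem \ref{thm:inscribed radius comparison} and Lemma \ref{lem:key lemma}. First I would invoke Lemma \ref{lem:observable inscribed radius is smaller than Dirichlet separation distance} to reduce the claim to bounding $\DS((M,f);\eta)$ from above by the right-hand side. Accordingly, for any Borel subset $\Omega \subset M$ with $m_{M,f}(\Omega) \geq \eta$ and $d_{M}(\Omega,\bm) > 0$, I would produce the same bound on $d_{M}(\Omega,\bm)$.

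For (1), the convex-ball-condition for $\kappa,\lambda$ together with the boundedness $f \leq (n-1)\delta$ persists for the rescaled pair $\kappa\,e^{-4\delta},\,\lambda\,e^{-2\delta}$. Thus Theorem \ref{thm:twisted inscribed radius comparison} gives $\IR M \leq C_{\kappa\,e^{-4\delta},\lambda\,e^{-2\delta}}$, and I may apply Lemma \ref{lem:twisted key lemma}(1) with $D := C_{\kappa\,e^{-4\delta},\lambda\,e^{-2\delta}}$. Dividing by $s_{n,\kappa\,e^{-4\delta},\lambda\,e^{-2\delta}}(D)$ and rearranging via the definition \eqref{eq:ball function} of $v_{n,\kappa\,e^{-4\delta},\lambda\,e^{-2\delta}}$ yields $v_{n,\kappa\,e^{-4\delta},\lambda\,e^{-2\delta}}(d_{M}(\Omega,\bm)) \geq \eta$, which by monotonicity inverts to the desired estimate $d_{M}(\Omega,\bm) \leq v^{-1}_{n,\kappa\,e^{-4\delta},\lambda\,e^{-2\delta}}(\eta)$.

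For (2), with $\lambda = \sqrt{|\kappa|}$ one checks $(\lambda\,e^{-2\delta})^{2} = |\kappa|\,e^{-4\delta} = -\kappa\,e^{-4\delta}$, so the Jacobi solution for the rescaled parameters reads $s_{\kappa\,e^{-4\delta},\lambda\,e^{-2\delta}}(t) = e^{-\lambda\,e^{-2\delta}\,t}$. In particular,
\begin{equation*}
\int_{0}^{\infty} s^{n-1}_{\kappa\,e^{-4\delta},\lambda\,e^{-2\delta}}(t)\,dt = \frac{1}{(n-1)\lambda\,e^{-2\delta}} < \infty,
\end{equation*}
which lets me emulate the case (2) step of Theorem \ref{thm:weighted main theorem}: applying Lemma \ref{lem:twisted key lemma}(2) and bounding the denominator $s_{n,\kappa\,e^{-4\delta},\lambda\,e^{-2\delta}}(\IR M)$ by this finite integral produces
\begin{equation*}
\eta \leq 1 - \frac{s_{n,\kappa\,e^{-4\delta},\lambda\,e^{-2\delta}}(d_{M}(\Omega,\bm))}{(\,(n-1)\lambda\,e^{-2\delta}\,)^{-1}} = e^{-(n-1)\lambda\,e^{-2\delta}\,d_{M}(\Omega,\bm)},
\end{equation*}
and taking logarithms gives the asserted exponential bound.

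The only genuinely delicate point is case (2) when $\IR M = \infty$, since Lemma \ref{lem:twisted key lemma} is literally stated under an a priori bound $\IR M \leq D$. I would circumvent this by applying Theorem \ref{thm:twisted volume comparison}(2) directly with $R \to \infty$ (using $m_{M,f}(M) = 1$ and the finiteness of the limit integral computed above) to reach the same inequality $\eta \leq e^{-(n-1)\lambda\,e^{-2\delta}\,d_{M}(\Omega,\bm)}$, thereby rendering the hypothesis $\IR M \leq D$ unnecessary. Everything else is a routine transcription of the argument for Theorem \ref{thm:weighted main theorem} into the twisted setting.
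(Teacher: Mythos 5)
Your proposal is correct and follows essentially the same route as the paper: reduce to bounding $\DS((M,f);\eta)$ via Lemma \ref{lem:observable inscribed radius is smaller than Dirichlet separation distance}, combine Theorem \ref{thm:twisted inscribed radius comparison} with Lemma \ref{lem:twisted key lemma} in case (1), and use the finiteness of $\int_{0}^{\infty} s^{n-1}_{\kappa\,e^{-4\delta},\lambda\,e^{-2\delta}}(t)\,dt$ in case (2). Your extra care about the possibility $\IR M=\infty$ in case (2) (passing to $R\to\infty$ in Theorem \ref{thm:twisted volume comparison} rather than invoking the lemma with $D=\IR M$) is a legitimate small refinement of a point the paper glosses over.
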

\begin{proof}
The proof is similar to that of Theorem \ref{thm:weighted main theorem}.
For a fixed $\Omega \subset M$ with $m_{M,f}(\Omega)\geq \eta,\,d_{M}(\Omega,\bm)>0$,
it suffices to estimate $d_{M}(\Omega,\bm)$ from above by the right hand side of the desired inequality in each case.

In the case where
$\kappa$ and $\lambda$ satisfy the convex-ball-condition,
from Theorem \ref{thm:twisted inscribed radius comparison} and Lemma \ref{lem:twisted key lemma}
we derive
\begin{equation*}
\eta \leq 1-\frac{s_{n,\kappa\,e^{-4\delta},\lambda\,e^{-2\delta}}(d_{M}(\Omega,\bm))}{s_{n,\kappa\,e^{-4\delta},\lambda\,e^{-2\delta}}(C_{\kappa\,e^{-4\delta},\lambda\,e^{-2\delta}})}=v_{n,\kappa\,e^{-4\delta},\lambda\,e^{-2\delta}}(d_{M}(\Omega,\bm)).
\end{equation*}
In particular,
$d_{M}(\Omega,\bm) \leq v^{-1}_{n,\kappa\,e^{-4\delta},\lambda\,e^{-2\delta}}(\eta)$.
This is the desired one.

If $\kappa<0$ and $\lambda=\sqrt{\vert \kappa \vert}$,
then in view of Lemma \ref{lem:twisted key lemma} we see
\begin{equation*}
\eta \leq 1-\frac{s_{n,\kappa \,e^{-4\delta},\lambda \,e^{-2\delta}}(r)}{\int^{\infty}_{0}\, s^{n-1}_{\kappa\,e^{-4\delta},\lambda\,e^{-2\delta}}(t)\,dt}=e^{-(n-1)\lambda \,e^{-2\delta} \,r},
\end{equation*}
where $r:=d_{M}(\Omega,\bm)$.
From the above inequality,
we deduce
\begin{equation*}
d_{M}(\Omega,\bm) \leq \frac{1}{(n-1)\lambda e^{-2\,\delta}}\,\log \frac{1}{\eta}.
\end{equation*}
This completes the proof of Theorem \ref{thm:twisted weighted main theorem}.
\end{proof}

%%%%%%%%%%%%%%%%%%%%%%%%
\subsection{Proof of Theorem \ref{thm:main theorem}}

In order to conclude Theorem \ref{thm:main theorem},
we calculate the invariants of the finite dimensional model spaces.
\begin{lem}\label{lem:observable and separation on model}
For every $\eta \in (0,1]$
the following hold:
\begin{enumerate}
\item for the ball-model-space $\ball$ defined as $(\ref{eq:ball model})$,
         \begin{equation*}
         \OI(\ball;-\eta)=\DS(\ball;\eta)=v^{-1}_{n,\kappa,\lambda}(\eta);
         \end{equation*}
\item for the warped-product-model-space $M^{n}_{\kappa,\lambda}$ defined as $(\ref{eq:warped product model})$,
         \begin{equation*}
         \OI(M^{n}_{\kappa,\lambda};-\eta)=\DS(M^{n}_{\kappa,\lambda};\eta)=\frac{1}{(n-1)\lambda}\,\log \frac{1}{\eta}.
         \end{equation*}
\end{enumerate}
\end{lem}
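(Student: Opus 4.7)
The plan is to combine two ingredients: (i) the proof of Theorem \ref{thm:weighted main theorem} in fact establishes the asserted bounds for $\DS$ directly before invoking $\OI\le \DS$, which gives the upper inequalities; and (ii) the distance-to-boundary function $\rho_{\partial X}$ serves as an explicit test function in the definition of $\OI$ that realizes the matching lower bound.

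For the upper bounds, I first verify that both model spaces saturate the hypotheses of Theorem \ref{thm:weighted main theorem} with $N=n$ and $f\equiv 0$. For $\ball\subset M^n_\kappa$ this holds by construction of the ball-condition. For $M^n_{\kappa,\lambda}$ with $\lambda=\sqrt{|\kappa|}$, a direct computation on $dt^2+e^{-2\lambda t}\,ds_{n-1}^2$ gives radial Ricci curvature $-(n-1)\lambda^2=(n-1)\kappa$ and boundary mean curvature $(n-1)\lambda$. Reading the proof of Theorem \ref{thm:weighted main theorem}, one bounds $\DS$ first and then passes to $\OI$ via Lemma \ref{lem:observable inscribed radius is smaller than Dirichlet separation distance}, so we immediately obtain
\begin{equation*}
\DS(\ball;\eta)\le v^{-1}_{n,\kappa,\lambda}(\eta),\qquad \DS(M^n_{\kappa,\lambda};\eta)\le \frac{1}{(n-1)\lambda}\log\frac{1}{\eta},
\end{equation*}
and Lemma \ref{lem:observable inscribed radius is smaller than Dirichlet separation distance} upgrades these to the same upper bounds for $\OI$.

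For the reverse inequalities I take $\varphi:=\rho_{\partial X}$, which is $1$-Lipschitz and vanishes on $\partial X$, hence is admissible in the definition of $\OI$. By the rotational symmetry of the two model spaces, inner normal geodesics from $\partial X$ parametrize the space, and a standard Jacobi field computation shows that the push-forward $m_{I,\varphi}$ has density proportional to $s^{n-1}_{\kappa,\lambda}(t)$ on $[0,\const]$ in case (1), and on $[0,\infty)$ with $s_{\kappa,\lambda}(t)=e^{-\lambda t}$ in case (2). Normalizing gives
\begin{equation*}
m_{I,\varphi}((t,\infty))=v_{n,\kappa,\lambda}(t) \quad\text{resp.}\quad m_{I,\varphi}((t,\infty))=e^{-(n-1)\lambda t}.
\end{equation*}
For any Borel $J\subset I$ with $m_{I,\varphi}(J)\ge 1-\eta$, setting $s:=\sup J$ gives $J\subset[0,s]$ and hence $m_{I,\varphi}((s,\infty))\le\eta$; this forces $s\ge v^{-1}_{n,\kappa,\lambda}(\eta)$ in case (1) and $s\ge\tfrac{1}{(n-1)\lambda}\log\tfrac{1}{\eta}$ in case (2). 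Since $\IR J=s$ in $I=[0,\infty)$, this is precisely the required lower bound on $\PI(I_\varphi;1-\eta)\le\OI(X;-\eta)$.

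The only potentially delicate point is the explicit identification of the push-forward density with $s^{n-1}_{\kappa,\lambda}(t)$, but this is a routine application of the Jacobi equation with initial conditions determined by the boundary's second fundamental form, which in both model cases is $\lambda$ times the identity. Combining the matching upper and lower bounds yields equality of $\OI$ and $\DS$ with the stated explicit values in both (1) and (2).
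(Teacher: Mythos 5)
Your proof is correct, but it takes a genuinely different route from the paper's. The paper computes $\DS(\ball;\eta)$ directly: the extremal configuration is the concentric closed ball $B_{\eta}$ of measure $\eta$, whose distance to $\partial \ball$ is $\const-r_{\eta}=v^{-1}_{n,\kappa,\lambda}(\eta)$; it then passes from $\DS$ to $\OI$ in one stroke via Proposition \ref{prop:Kazukawa} (the two invariants coincide whenever the measure has full support), and treats the warped product identically. You instead sandwich: the upper bound $\DS\le v^{-1}_{n,\kappa,\lambda}(\eta)$ (resp.\ $\DS\le \frac{1}{(n-1)\lambda}\log\frac{1}{\eta}$) is imported from the proof of Theorem \ref{thm:weighted main theorem} --- equivalently from Lemma \ref{lem:key lemma} together with Theorems \ref{thm:inscribed radius comparison} and \ref{thm:volume comparison}, whose hypotheses the model spaces satisfy with equality --- while the matching lower bound for $\OI$ comes from the explicit screen $I_{\rho_{\partial X}}$, whose law you identify exactly as the paper does later in $(\ref{eq:basic calculation})$. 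Combined with $\OI\le\DS$ from Lemma \ref{lem:observable inscribed radius is smaller than Dirichlet separation distance}, all three quantities are pinched to the stated value. There is no circularity in invoking Theorem \ref{thm:weighted main theorem} here, since its proof nowhere uses Lemma \ref{lem:observable and separation on model}. What your route buys is independence from Proposition \ref{prop:Kazukawa} and an explicit witness ($\varphi=\rho_{\partial X}$) showing that the supremum in the definition of $\OI$ is realized by the distance function itself; what the paper's route buys is a short, self-contained computation on the model space that does not lean on the comparison machinery of Section \ref{sec:Comparisons}. Both arguments ultimately rest on the same computation of the distribution of $\rho_{\partial X}$ on the models.
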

\begin{proof}
Let us present the equality for $\ball$.
We show
\begin{equation}\label{eq:separation equal ball model}
\DS(\ball;\eta)=v^{-1}_{n,\kappa,\lambda}(\eta).
\end{equation}
Let $B_{\eta} \subset \ball$ denote the closed geodesic ball with same center as $\ball$ and $m_{\ball}(B_{\eta})=\eta$.
For the radius $r_{\eta}$ of $B_{\eta}$,
we see
\begin{equation*}
\DS \left(\ball;\eta \right)=d_{\ball}\left(B_{\eta},\partial \ball \right)=\const-r_{\eta}.
\end{equation*}
It holds that
\begin{equation*}
\eta=m_{\ball}(B_{\eta})=\frac{\int^{\const}_{\const-r_{\eta}}\,s^{n-1}_{\kappa,\lambda}(t)\,dt}{\int^{\const}_{0}\,s^{n-1}_{\kappa,\lambda}(t)\,dt}=v_{n,\kappa,\lambda}\left(\const-r_{\eta}\right);
\end{equation*}
in particular,
$\const-r_{\eta}=v^{-1}_{n,\kappa,\lambda}(\eta)$.
This yields (\ref{eq:separation equal ball model}).
Since $\supp m_{\ball}$ coincides with $\ball$,
Proposition \ref{prop:Kazukawa} and (\ref{eq:separation equal ball model}) imply the desired one.

For the warped-product-model-space $M^{n}_{\kappa,\lambda}$,
the same argument as in the proof of $\ball$ leads to the desired equality.
\end{proof}

We are now in a position to prove Theorem \ref{thm:main theorem}.

\begin{proof}[Proof of Theorem \ref{thm:main theorem}]
Let $\bm$ be compact.
Assume $\ric_{\bm^{\perp}}\geq (n-1)\kappa$ and $H_{\bm}\geq (n-1)\lambda$.
Letting $f=\log \vol_{M}(M)$ and $N=n$ in Theorem \ref{thm:weighted main theorem},
for $\eta \in (0,1]$
we have the following (cf. Remark \ref{rem:finite volume}):
\begin{enumerate}
\item if $\kappa$ and $\lambda$ satisfy the ball-condition,
         then
         \begin{equation*}
         \OI(M;-\eta) \leq v^{-1}_{n,\kappa,\lambda}(\eta);
         \end{equation*}
\item if $\kappa<0$ and $\lambda=\sqrt{\vert \kappa \vert}$,
         then
         \begin{equation*}
         \OI(M;-\eta) \leq \frac{1}{(n-1)\lambda}\,\log \frac{1}{\eta}.
         \end{equation*}
\end{enumerate}
By Lemma \ref{lem:observable and separation on model},
we complete the proof of Theorem \ref{thm:main theorem}.
\end{proof}
\section{Infinite dimensional comparisons}\label{sec:Infinite dimensional comparisons}

%%%%%%%%%%%%%%%%%%%%%%%
\subsection{Relative volume comparisons}\label{sec:Relative volume comparisons}
In order to prove Theorem \ref{thm:infinite dimensional main theorem},
we develop comparison geometry of manifolds with boundary under the curvature condition (\ref{eq:infinite dimensional curvature bound}).
We notice that
in our comparison theorems in this subsection,
$m_{M,f}$ need not be a probability measure.

We first show the following Laplacian comparison:
\begin{lem}\label{lem:Laplacian comparison}
Let $z\in \bm$.
Let us assume that
$\ric^{\infty}_{f}(\gamma'_{z}(t))\geq K$ for all $t \in (0,\tau(z))$,
and $H_{f,z}\geq \Lambda$.
Then for all $t \in (0,\tau(z))$
\begin{equation*}
\Delta_{f} \rho_{\bm}(\gamma_{z}(t))\geq Kt+\Lambda.
\end{equation*}
\end{lem}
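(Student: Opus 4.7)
The plan is a direct application of the weighted Bochner identity $(\ref{eq:Bochner formula})$ to the distance function $\psi := \rho_{\bm}$ along the inward normal geodesic $\gamma_{z}$. For $t \in (0,\tau(z))$ the point $\gamma_{z}(t)$ lies in the regular set of $\rho_{\bm}$, so $\Vert \nabla \rho_{\bm} \Vert \equiv 1$ in a neighborhood, and the left-hand side of $(\ref{eq:Bochner formula})$ vanishes. Since along $\gamma_{z}$ the gradient $\nabla \rho_{\bm}$ coincides with $\gamma'_{z}$, the term $g(\nabla \Delta_{f}\rho_{\bm},\nabla \rho_{\bm})$ is precisely $\frac{d}{dt}\left(\Delta_{f}\rho_{\bm} \circ \gamma_{z}\right)$, and Bochner yields the pointwise identity
\begin{equation*}
\frac{d}{dt}\,\Delta_{f}\rho_{\bm}(\gamma_{z}(t)) = \ric^{\infty}_{f}(\gamma'_{z}(t)) + \Vert \Hess \rho_{\bm}(\gamma_{z}(t)) \Vert^{2}_{\HS}.
\end{equation*}

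From the hypothesis $\ric^{\infty}_{f}(\gamma'_{z}(t))\geq K$ together with the trivial lower bound $\Vert \Hess \rho_{\bm}\Vert^{2}_{\HS}\geq 0$, we obtain
\begin{equation*}
\frac{d}{dt}\,\Delta_{f}\rho_{\bm}(\gamma_{z}(t)) \geq K \quad \text{for all } t\in (0,\tau(z)).
\end{equation*}
Integrating on $[s,t]\subset (0,\tau(z))$ gives $\Delta_{f}\rho_{\bm}(\gamma_{z}(t)) - \Delta_{f}\rho_{\bm}(\gamma_{z}(s)) \geq K(t-s)$. Letting $s\to 0^{+}$ and invoking the boundary limit $\lim_{s\to 0^{+}}\Delta_{f}\rho_{\bm}(\gamma_{z}(s))=H_{f,z}$ recorded in the preliminaries together with $H_{f,z}\geq \Lambda$, we conclude $\Delta_{f}\rho_{\bm}(\gamma_{z}(t)) \geq Kt + \Lambda$.

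There is essentially no obstacle: unlike the $N$-Bakry-Émery ($N<\infty$) setting, in which one would need a Cauchy-Schwarz squeezing of the form $\Vert \Hess \rho_{\bm}\Vert^{2}_{\HS}\geq (\Delta \rho_{\bm})^{2}/(N-1)$ and a Riccati comparison against $s_{N,\kappa,\lambda}$, in the $N=\infty$ regime the nonnegativity of the Hessian-squared term already suffices and the estimate is linear in $t$. The only mild technical point is justifying the passage to the boundary at $t=0^{+}$, but this continuity is built into the setup via the Fermi-coordinate formula $(\ref{eq:Laplacian representation})$ and the smoothness of the weighted volume element $\theta_{f}(t,z)$ near $t=0$.
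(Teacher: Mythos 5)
Your proof is correct and follows essentially the same route as the paper: apply the weighted Bochner formula $(\ref{eq:Bochner formula})$ to $\rho_{\bm}$, note the left-hand side vanishes since $\Vert\nabla\rho_{\bm}\Vert\equiv 1$ on the regular set, drop the nonnegative Hessian term to get $\frac{d}{dt}\Delta_{f}\rho_{\bm}(\gamma_{z}(t))\geq K$, and integrate from the boundary value $H_{f,z}\geq\Lambda$. No gaps.
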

\begin{proof}
Define $h_{f,z}:=\left(\Delta_{f}\rho_{\bm}\right) \circ \gamma_{z}$.
By applying the Bochner formula (\ref{eq:Bochner formula}) to the distance function $\rho_{\bm}$,
we obtain
\begin{align*}
0 &   =   \ric^{\infty}_{f}(\gamma'_{z}(t))+\Vert \Hess \rho_{\bm}\Vert^{2}_{\HS}\left(\gamma_{z}(t)\right)-g\left(\nabla \Delta_{f}\rho_{\bm},\nabla \rho_{\bm}  \right)(\gamma_{z}(t))\\
   &\geq K-h'_{f,z}(t).
\end{align*}
It holds that $h_{f,z}(t)\to H_{f,z}$ as $t\to 0$,
and hence
\begin{equation*}
h_{f,z}(t) \geq Kt +H_{f,z}\geq Kt +\Lambda.
\end{equation*}
We arrive at the desired inequality.
\end{proof}

Furthermore,
we prove the following volume element comparison:
\begin{lem}\label{lem:volume element comparison}
Let $z\in \bm$.
Let us assume that
$\ric^{\infty}_{f}(\gamma'_{z}(t))\geq K$ for all $t \in (0,\tau(z))$,
and $H_{f,z}\geq \Lambda$.
Then for all $t_{1},t_{2} \in [0,\tau(z))$ with $t_{1}\leq t_{2}$
\begin{equation*}
\frac{\theta_{f}(t_{2},z)}{ \theta_{f}(t_{1},z)}\leq \frac{e^{-\frac{K}{2}\,t^{2}_{2}-\Lambda\,t_{2}}}{ e^{-\frac{K}{2}\,t^{2}_{1}-\Lambda\,t_{1}}    },
\end{equation*}
where $\theta_{f}(t,z)$ is defined as $(\ref{eq:volume element})$.
\end{lem}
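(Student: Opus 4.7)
The plan is to reduce the volume element inequality to the Laplacian comparison already established in Lemma \ref{lem:Laplacian comparison} by recognizing that the logarithmic derivative of $\theta_{f}(\cdot,z)$ is exactly $-\Delta_{f}\rho_{\bm}\circ \gamma_{z}$. Concretely, I will invoke the identity
\begin{equation*}
\Delta_{f}\rho_{\bm}(\gamma_{z}(t)) = -\frac{\theta_{f}'(t,z)}{\theta_{f}(t,z)}
\end{equation*}
from (\ref{eq:Laplacian representation}), which is valid for all $t\in (0,\tau(z))$. Combining this with Lemma \ref{lem:Laplacian comparison} yields
\begin{equation*}
\left(\log \theta_{f}(t,z)\right)' \leq -(Kt+\Lambda)
\end{equation*}
for every $t\in (0,\tau(z))$.

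Next, I would observe that the right hand side is itself an exact logarithmic derivative, namely
\begin{equation*}
-(Kt+\Lambda) = \left(\log e^{-\frac{K}{2}t^{2}-\Lambda t}\right)',
\end{equation*}
so that the function $t \mapsto \log \theta_{f}(t,z) - \log e^{-\frac{K}{2}t^{2}-\Lambda t}$ is non-increasing on $(0,\tau(z))$. Integrating the differential inequality from $t_{1}$ to $t_{2}$ for $0<t_{1}\leq t_{2}<\tau(z)$ and exponentiating gives the asserted ratio bound.

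The only minor point to address is the case $t_{1}=0$, where $\theta_{f}(0,z)$ is understood as the limit (the volume element on $\bm$ at $z$). Here one uses that $\theta_{f}(t,z)\to 1$ in the appropriate normalization and that $h_{f,z}(t)\to H_{f,z}$ as $t\to 0$ (as already noted in the proof of Lemma \ref{lem:Laplacian comparison}), so the inequality extends to the closed interval $[0,\tau(z))$ by continuity. There is no genuine obstacle in the argument; the substantive work has already been absorbed into the Bochner-formula computation of Lemma \ref{lem:Laplacian comparison}, and this lemma is simply its integrated form.
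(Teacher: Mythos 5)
Your proposal is correct and is essentially identical to the paper's proof: the paper likewise combines the identity (\ref{eq:Laplacian representation}) with Lemma \ref{lem:Laplacian comparison} to conclude that $\frac{d}{dt}\log\bigl(\theta_{f}(t,z)\,e^{\frac{K}{2}t^{2}+\Lambda t}\bigr)\leq 0$ on $(0,\tau(z))$ and then integrates. The only cosmetic difference is that you spell out the continuity argument at $t_{1}=0$ (where, strictly, $\theta_{f}(0,z)=e^{-f(z)}$ rather than $1$, though this does not affect the ratio bound), which the paper leaves implicit.
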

\begin{proof}
By (\ref{eq:Laplacian representation}) and Lemma \ref{lem:Laplacian comparison},
for all $t \in (0,\tau(z))$
we see
\begin{equation*}
\frac{d}{dt}\log \frac{\theta_{f}(t,z)}{  e^{-\frac{K}{2}\,t^{2}-\Lambda\,t}  }=-\Delta_{f}\rho_{\bm}(\gamma_{z}(t))+\left(Kt +\Lambda  \right) \leq 0.
\end{equation*}
This implies the lemma.
\end{proof}

We now conclude the following relative volume comparison:
\begin{thm}\label{thm:relative volume comparison}
Let $\bm$ be compact.
Let us assume that
$\ric^{\infty}_{f,\bm^{\perp}}\geq K$ and $H_{f,\bm}\geq \Lambda$.
Then for all $r,R>0$ with $r\leq R$
\begin{equation*}
\frac{m_{M,f}(B_{R}(\bm))}{ m_{M,f}(B_{r}(\bm))}\leq \frac{  \int^{R}_{0}\,e^{-\frac{K}{2}\,t^{2}-\Lambda\,t} \,dt }{  \int^{r}_{0}\,e^{-\frac{K}{2}\,t^{2}-\Lambda\,t} \,dt }.
\end{equation*}
\end{thm}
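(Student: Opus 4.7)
The plan is to reduce the theorem to a pointwise (fiberwise in $z\in\bm$) Bishop--Gromov style monotonicity inequality along normal geodesics, then integrate it over $\bm$ using the integration formula (\ref{eq:integration formula}). The key infinitesimal input is already provided by the volume element comparison in Lemma \ref{lem:volume element comparison}.

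Concretely, I would introduce the model weight $s(t):=e^{-\frac{K}{2}t^{2}-\Lambda t}$ and set, for each $z\in\bm$ and $\rho>0$,
\begin{equation*}
G(\rho,z):=\int_{0}^{\rho}\bar{\theta}_{f}(t,z)\,dt,\qquad H(\rho):=\int_{0}^{\rho}s(t)\,dt,
\end{equation*}
where $\bar{\theta}_{f}$ is the extended volume element of (\ref{eq:extended volume element}). By (\ref{eq:integration formula}), $m_{M,f}(B_{\rho}(\bm))=\int_{\bm}G(\rho,z)\,d\vol_{h}$. It therefore suffices to establish the fiberwise inequality
\begin{equation*}
G(R,z)\,H(r)\leq G(r,z)\,H(R)\qquad\text{for every }z\in\bm,
\end{equation*}
since integrating it against $d\vol_{h}$ on the compact boundary $\bm$ yields the desired relative volume estimate at once.

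For the fiberwise inequality, I would apply Lemma \ref{lem:volume element comparison}: for each fixed $z$, the ratio $\theta_{f}(t,z)/s(t)$ is non-increasing in $t$ on $[0,\tau(z))$; since $\bar{\theta}_{f}(t,z)=0$ for $t\geq\tau(z)$, the extended ratio $\bar{\theta}_{f}(t,z)/s(t)$ is still non-increasing on all of $[0,\infty)$. I then invoke the standard calculus fact that if $u,v\colon[0,\infty)\to[0,\infty)$ are measurable with $v>0$ and $u/v$ non-increasing, then $\rho\mapsto(\int_{0}^{\rho}u)/(\int_{0}^{\rho}v)$ is also non-increasing (a Chebyshev-type inequality, proved by differentiating or by splitting integrals at $r$). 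Taking $u=\bar{\theta}_{f}(\cdot,z)$ and $v=s$ gives $G(R,z)/H(R)\leq G(r,z)/H(r)$, which is exactly what is needed.

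The argument is essentially a packaging of the infinitesimal estimates already obtained in Lemmas \ref{lem:Laplacian comparison} and \ref{lem:volume element comparison}, so the main expected obstacle is really just the cut-locus bookkeeping. This is handled by passing to $\bar{\theta}_{f}$, whose vanishing for $t\geq\tau(z)$ only strengthens the pointwise monotonicity and hence preserves the quotient monotonicity across $[0,\infty)$. Once the fiberwise inequality is secured, the integration over $\bm$ using (\ref{eq:integration formula}) is a one-line computation, completing the proof.
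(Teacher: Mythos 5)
Your proposal is correct and follows essentially the same route as the paper: both rest on Lemma \ref{lem:volume element comparison} applied to the extended volume element $\bar{\theta}_{f}$, deduce the fiberwise quotient monotonicity (the paper proves your ``standard calculus fact'' directly by integrating the two-variable inequality over $[0,r]\times[r,R]$), and then integrate over the compact boundary via (\ref{eq:integration formula}). No gaps.
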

\begin{proof}
Using Lemma \ref{lem:volume element comparison},
for all $t_{1},t_{2}\geq 0$ with $t_{1}\leq t_{2}$ we have
\begin{equation*}
\bar{\theta}_{f}(t_{2},z)\; e^{-\frac{K}{2}\,t^{2}_{1}-\Lambda\,t_{1}} \leq \bar{\theta}_{f}(t_{1},z)\; e^{-\frac{K}{2}\,t^{2}_{2}-\Lambda\,t_{2}},
\end{equation*}
where $\bar{\theta}_{f}$ is defined as (\ref{eq:extended volume element}).
Let us integrate the both sides over $[0,r]$ with respect to $t_{1}$,
and over $[r,R]$ with respect to $t_{2}$.
It follows that
\begin{equation*}
\frac{\int^{R}_{r}\bar{\theta}_{f}(t_{2},z)\,dt_{2}}{\int^{r}_{0}\bar{\theta}_{f}(t_{1},z)\,dt_{1}}\leq \frac{\int^{R}_{r}\,  e^{-\frac{K}{2}\,t^{2}_{2}-\Lambda\,t_{2}}   \,dt_{2}}{  \int^{r}_{0}\,  e^{-\frac{K}{2}\,t^{2}_{1}-\Lambda\,t_{1}}   \,dt_{1}   }.
\end{equation*}
The formula (\ref{eq:integration formula}) yields
\begin{equation*}
          \frac{m_{M,f}( B_{R}(\bm))}{ m_{M,f}( B_{r}(\bm))  }\leq 1+\frac{\int^{R}_{r}\,  e^{-\frac{K}{2}\,t^{2}_{2}-\Lambda\,t_{2}}   \,dt_{2}}{  \int^{r}_{0}\,  e^{-\frac{K}{2}\,t^{2}_{1}-\Lambda\,t_{1}}   \,dt_{1}   }
    =      \frac{  \int^{R}_{0}\,e^{-\frac{K}{2}\,t^{2}_{2}-\Lambda\,t_{2}} \,dt_{2} }{  \int^{r}_{0}\,e^{-\frac{K}{2}\,t^{2}_{1}-\Lambda\,t_{1}} \,dt_{1} }.
\end{equation*}
We complete the proof of Theorem \ref{thm:relative volume comparison}.
\end{proof}

\begin{rem}
The author \cite{S1} has shown Lemmas \ref{lem:Laplacian comparison}, \ref{lem:volume element comparison} and Theorem \ref{thm:relative volume comparison} when $K=0$ and $\Lambda=0$ (see Lemmas 3.2, 3.4 and Theorem 5.5 in \cite{S1}, and cf. Remark \ref{rem:remark on the curvature assumption}).
\end{rem}

\begin{rem}
Under the same setting as in Theorem \ref{thm:relative volume comparison},
Morgan \cite{Mo} has obtained a similar absolute volume comparison theorem of Heintze-Karcher type (see Theorem 2 in \cite{Mo}).
\end{rem}

%%%%%%%%%%%%%%%%%%%%%%%
\subsection{Distribution laws}\label{sec:Infinite dimensional model spaces and distribution laws}
Before we show Theorem \ref{thm:infinite dimensional main theorem},
we present distribution laws concerning our infinite dimensional model spaces.

We observe that
our infinite dimensional model spaces appear as the limits of a sequence of hemispheres $\{B^{n}_{\kappa/n,0}\}$,
and that of Euclidean balls $\{B^{n}_{0,\lambda/n}\}$ for $\kappa,\lambda>0$ by letting $n\to \infty$.
\begin{prop}\label{prop:ball distribution laws}
Let us assume that
$\kappa$ and $\lambda$ satisfy the ball-condition.
Then the following distribution laws hold:
\begin{enumerate}
\item if $\kappa>0$ and $\lambda=0$,
         then
         \begin{equation*}
         \frac{d m_{I,\rho_{  \partial B^{n}_{\kappa/n,0   }     }}    }{dt} \to \frac{e^{-\frac{\kappa}{2}\,t^{2}}}{\int_{I}\,e^{-\frac{\kappa}{2}\,t^{2}}\,    dt}
         \end{equation*}
         as $n\to \infty$,
         where $m_{I,\rho_{ \partial B^{n}_{\kappa/n,0   }}}$ denotes the Borel probability measure of the $\rho_{ \partial B^{n}_{\kappa/n,0   }}$-screen $I_{\rho_{ \partial B^{n}_{\kappa/n,0   }}}$ defined as $(\ref{eq:screen})$;
         in particular,
         $m_{I,\rho_{ \partial B^{n}_{\kappa/n,0   }}}$ weakly converges to the Borel probability measure of the half-Gaussian-model-space $G_{\kappa,0}$ defined as $(\ref{eq:Gaussian model space})$;
\item if $\kappa=0$ and $\lambda>0$,
         then
         \begin{equation*}
         \frac{d   m_{I,\rho_{  \partial B^{n}_{0, \lambda/n  }     }} }{dt} \to \lambda\, e^{-\lambda t};
         \end{equation*}
         as $n\to \infty$;
         in particular,
         $m_{I,\rho_{  \partial B^{n}_{0,\lambda/n   }}}$ weakly converges to the measure of the exponential-model-space $E_{\lambda}$ defined as $(\ref{eq:exponential model space})$.
\end{enumerate}
\end{prop}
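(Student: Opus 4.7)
The plan is to first obtain an explicit formula for the density of $m_{I,\rho_{\partial \ball}}$ in terms of the Jacobi factor $s_{\kappa,\lambda}$, and then pass to the limit $n\to\infty$ in each case by combining pointwise convergence of the unnormalized densities with a dominated convergence argument for the normalizing integrals. Since $\ball$ is geodesically symmetric around its center, the normal exponential map from $\partial \ball$ is a diffeomorphism onto the complement of the center, and the corresponding volume element along each inward-pointing geodesic is exactly $s^{n-1}_{\kappa,\lambda}(t)$ times the area form on $\partial \ball$. Using $(\ref{eq:integration formula})$ together with $f\equiv \mathrm{const}$, for $t\in[0,\const]$ this yields
\begin{equation*}
\frac{dm_{I,\rho_{\partial \ball}}}{dt}(t)=\frac{s^{n-1}_{\kappa,\lambda}(t)}{\int_{0}^{\const}s^{n-1}_{\kappa,\lambda}(\tau)\,d\tau},
\end{equation*}
and I extend the density by $0$ beyond $\const$.

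For (1), with parameters $(\kappa/n,0)$, I have $s_{\kappa/n,0}(t)=\cos(\sqrt{\kappa/n}\,t)$ and $C_{\kappa/n,0}=(\pi/2)\sqrt{n/\kappa}\to\infty$. Using $\log\cos x=-x^{2}/2+O(x^{4})$ for small $x$, I get pointwise convergence
\begin{equation*}
s^{n-1}_{\kappa/n,0}(t)=\exp\bigl((n-1)\log\cos(\sqrt{\kappa/n}\,t)\bigr)\longrightarrow e^{-\frac{\kappa}{2}t^{2}}
\end{equation*}
for each fixed $t\geq 0$. Similarly, for (2) with $(0,\lambda/n)$, $s_{0,\lambda/n}(t)=1-(\lambda/n)t$ and $C_{0,\lambda/n}=n/\lambda\to\infty$, so $s^{n-1}_{0,\lambda/n}(t)=(1-\lambda t/n)^{n-1}\to e^{-\lambda t}$ for each fixed $t\geq 0$.

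Next I need convergence of the normalizing integrals. For this I exhibit $n$-independent integrable envelopes: the inequality $\log\cos x\leq -x^{2}/2$ on $[0,\pi/2]$ gives $s^{n-1}_{\kappa/n,0}(t)\leq e^{-(n-1)\kappa t^{2}/(2n)}\leq e^{-\kappa t^{2}/4}$ for $n\geq 2$, while $s^{n-1}_{0,\lambda/n}(t)\leq e^{-(n-1)\lambda t/n}\leq e^{-\lambda t/2}$ for $n\geq 2$. Dominated convergence then yields
\begin{equation*}
\int_{0}^{C_{\kappa/n,0}}s^{n-1}_{\kappa/n,0}(\tau)\,d\tau\longrightarrow \int_{I}e^{-\frac{\kappa}{2}\tau^{2}}\,d\tau,\qquad \int_{0}^{C_{0,\lambda/n}}s^{n-1}_{0,\lambda/n}(\tau)\,d\tau\longrightarrow \int_{I}e^{-\lambda\tau}\,d\tau=\frac{1}{\lambda}.
\end{equation*}
Combining with the pointwise limits of the numerators gives pointwise convergence of the normalized densities to the densities of $G_{\kappa,0}$ and $E_{\lambda}$, respectively. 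Scheff\'e's theorem (pointwise convergence of probability densities on $I$ implies $L^{1}$-convergence) then upgrades this to $L^{1}$-convergence of the densities, which in turn implies weak convergence of the associated Borel probability measures.

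The only step that takes genuine care is justifying the density formula: I must check that the normal exponential map from $\partial\ball$ is indeed a diffeomorphism off a measure-zero set of $\ball$ (the center), and that along each inward-pointing geodesic the Jacobi equation on the model produces the factor $s^{n-1}_{\kappa,\lambda}(t)$. Once this identification is made, everything else is a one-dimensional limit computation and the dominated convergence bookkeeping above. I do not anticipate any substantive obstacle beyond verifying these standard facts about the model.
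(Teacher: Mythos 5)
Your proposal is correct and follows essentially the same route as the paper: both derive the explicit density $s^{n-1}_{\kappa,\lambda}(t)/\int_0^{\const}s^{n-1}_{\kappa,\lambda}$ for the screen measure and then pass to the pointwise limits $\cos^{n-1}(\sqrt{\kappa/n}\,t)\to e^{-\kappa t^2/2}$ and $(1-\lambda t/n)^{n-1}\to e^{-\lambda t}$. The only difference is that you spell out the dominated-convergence envelopes and the Scheff\'e step, which the paper leaves implicit.
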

\begin{proof}
We notice that
if $\kappa$ and $\lambda$ satisfy the ball-condition,
then
\begin{align}\label{eq:basic calculation}
\frac{d m_{I,\rho_{ \partial \ball    }}    }{dt}&=\frac{s^{n-1}_{\kappa,\lambda}(t)}{\int^{\const}_{0}\,s^{n-1}_{\kappa,\lambda}(t)\,dt          },\,\, s_{\kappa,\lambda}(t)=-\sqrt{\kappa+\lambda^{2}}\,s_{\kappa}(t-\const),\\ \notag
\const&=\begin{cases}
                                       \dfrac{1}{\sqrt{\kappa}}\left(\dfrac{\pi}{2}-\tan^{-1}\dfrac{\lambda}{\sqrt{\kappa}} \right) & \text{if $\kappa>0$}, \\
                                       \lambda^{-1}                         & \text{if $\kappa =0$},\\
                                       \dfrac{1}{2\,\sqrt{\vert \kappa \vert}}\,\log \dfrac{\lambda+\sqrt{\vert \kappa \vert}}{\lambda-\sqrt{\vert \kappa \vert}} & \text{if $\kappa<0$},
                                   \end{cases}
\end{align}
where $s_{\kappa}(t)$ is a unique solution of the Jacobi equation $\psi''(t)+\kappa\, \psi(t)=0$ with $\psi(0)=0,\,\psi'(0)=1$;
in particular,
if $\kappa>0$,
then
\begin{equation}\label{eq:positive explicit calculation}
s_{\kappa,\lambda}(t)=\sqrt{1+\frac{\lambda^{2}}{\kappa}}\,\cos\sqrt{\kappa} \left(t+\frac{1}{\sqrt{\kappa} }\tan^{-1}  \frac{\lambda}{\sqrt{\kappa}} \right),
\end{equation}
and if $\kappa=0$,
then $s_{0,\lambda}(t)=1-\lambda\,t$.
Therefore,
in the case where $\kappa>0$ and $\lambda=0$,
the desired convergence follows from (\ref{eq:basic calculation}), (\ref{eq:positive explicit calculation}) and
\begin{equation*}
\cos^{n-1}\frac{t}{\sqrt{n}}\to e^{-\frac{t^{2}}{2}}.
\end{equation*}
In the case where $\kappa=0$ and $\lambda>0$,
the formula (\ref{eq:basic calculation}) yields
\begin{equation*}
\frac{d m_{I,\rho_{  \partial B^{n}_{0,\lambda/n   }     }}    }{dt}=\lambda\,\left(1-\frac{\lambda}{n}\,t \right)^{n-1}\to \lambda\, e^{-\lambda t}
\end{equation*}
as $n\to \infty$.
We complete the observation.
\end{proof}

We further mention that
the exponential-model-space also appears as the limit of a sequence of warped-product-model-spaces $\{M^{n}_{\kappa/n^{2},\lambda/n,}\}$ for $\kappa<0$ and $\lambda=\sqrt{\vert \kappa \vert}$
when $n\to \infty$,
here $\lambda/n=\sqrt{\vert \kappa/n^{2} \vert}$.
\begin{prop}\label{prop:exponential distribution laws}
Let $\kappa<0$ and $\lambda=\sqrt{\vert \kappa \vert}$.
Then
\begin{equation*}
\frac{d   m_{I,\rho_{  \partial M^{n}_{\kappa/n^{2},\lambda/n}   }} }{dt} \to \lambda\, e^{-\lambda t};
\end{equation*}
as $n\to \infty$;
in particular,
$m_{I,\rho_{ \partial M^{n}_{\kappa/n^{2},\lambda/n}   }}$ weakly converges to the Borel probability measure of the exponential-model-space $E_{\lambda}$.
\end{prop}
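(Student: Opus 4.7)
The plan is to compute the density $dm_{I,\rho_{\partial M^{n}_{\kappa/n^{2},\lambda/n}}}/dt$ in closed form using the explicit warped product structure of $M^{n}_{\kappa,\lambda}$, and then pass to the limit by an elementary one-variable calculation analogous to part (2) of Proposition \ref{prop:ball distribution laws}.

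First, I would unwind the definitions. For $\kappa<0$ and $\lambda=\sqrt{|\kappa|}$, the model $M^{n}_{\kappa,\lambda}=([0,\infty)\times \mathbb{S}^{n-1},dt^{2}+e^{-2\lambda t}ds_{n-1}^{2})$ has Riemannian volume element $e^{-(n-1)\lambda t}\,dt\,d\mathrm{vol}_{\mathbb{S}^{n-1}}$, and the boundary $\partial M^{n}_{\kappa,\lambda}=\{0\}\times\mathbb{S}^{n-1}$ is such that $\rho_{\partial M^{n}_{\kappa,\lambda}}$ coincides with the projection onto the $t$-coordinate. Hence the total Riemannian volume equals $\mathrm{vol}(\mathbb{S}^{n-1})/((n-1)\lambda)<\infty$ (as also guaranteed by the Heintze-Karcher inequality, cf.\ Remark \ref{rem:finite volume}), and the pushforward of the normalized volume measure by $\rho_{\partial M^{n}_{\kappa,\lambda}}$ has density
\begin{equation*}
\frac{dm_{I,\rho_{\partial M^{n}_{\kappa,\lambda}}}}{dt}=(n-1)\lambda\,e^{-(n-1)\lambda t}.
\end{equation*}

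Second, I would substitute $\kappa/n^{2}$ for $\kappa$ and $\lambda/n$ for $\lambda$ (this is consistent since $\lambda/n=\sqrt{|\kappa/n^{2}|}$), obtaining
\begin{equation*}
\frac{dm_{I,\rho_{\partial M^{n}_{\kappa/n^{2},\lambda/n}}}}{dt}=\frac{(n-1)\lambda}{n}\,\exp\!\left(-\frac{(n-1)\lambda}{n}\,t\right),
\end{equation*}
which tends pointwise (and uniformly on compact subsets of $I$) to $\lambda\,e^{-\lambda t}$ as $n\to\infty$. Weak convergence of $m_{I,\rho_{\partial M^{n}_{\kappa/n^{2},\lambda/n}}}$ to the measure of the exponential-model-space $E_{\lambda}$ would then follow from dominated convergence: for $n\geq 2$ the densities are dominated by $\lambda\,e^{-\lambda t/2}$, which is integrable on $I$, so for every bounded continuous $f:I\to\mathbb{R}$ one has $\int_{I}f\,dm_{I,\rho_{\partial M^{n}_{\kappa/n^{2},\lambda/n}}}\to \int_{I}f(t)\,\lambda\,e^{-\lambda t}\,dt$.

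I do not foresee a serious obstacle. Once the warped-product density has been written down explicitly, the argument reduces to the trivial limits $(n-1)\lambda/n\to\lambda$ and $e^{-(n-1)\lambda t/n}\to e^{-\lambda t}$, so the content is even lighter than in the ball case treated in Proposition \ref{prop:ball distribution laws}, where a $\cos^{n-1}(t/\sqrt{n})$-type expression had to be controlled.
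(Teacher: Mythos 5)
Your proposal is correct and follows essentially the same route as the paper: the paper likewise observes that $s_{\kappa,\lambda}(t)=e^{-\lambda t}$ gives the pushforward density $(n-1)\lambda\,e^{-(n-1)\lambda t}$ and then rescales $(\kappa,\lambda)\mapsto(\kappa/n^{2},\lambda/n)$ and lets $n\to\infty$. Your extra care with the identification $\rho_{\partial M^{n}_{\kappa,\lambda}}=t$ and the dominated-convergence justification of weak convergence is fine but not a different method.
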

\begin{proof}
Let us note that
if $\kappa<0$ and $\lambda=\sqrt{\vert \kappa \vert}$,
then $s_{\kappa,\lambda}(t)=e^{-\lambda t}$
and 
\begin{equation*}
\frac{d m_{I,\rho_{ \partial M^{n}_{\kappa,\lambda}    }}    }{dt}=\frac{s^{n-1}_{\kappa,\lambda}(t)}{\int_{I}\,s^{n-1}_{\kappa,\lambda}(t)\,dt          }=(n-1)\,\lambda\,e^{-(n-1)\lambda t}.
\end{equation*}
This proves the assertion.
\end{proof}

%%%%%%%%%%%%%%%%%%%%%%%
\subsection{Proof of Theorem \ref{thm:infinite dimensional main theorem}}\label{sec:Observable inscribed radius comparisons}
One can prove the following result only by replacing the role of Theorem \ref{thm:volume comparison} with that of Theorem \ref{thm:relative volume comparison} in the proof of Lemma \ref{lem:key lemma}.
The proof is left to the reader.
\begin{lem}\label{lem:infinite dimensional key lemma}
Let $\bm$ be compact.
Let us assume $\ric^{\infty}_{f,\bm^{\perp}}\geq K$ and $H_{f,\bm} \geq \Lambda$.
Suppose additionally that
$\IR M\leq D$ for some $D>0$.
For $\eta \in (0,1)$,
let $\Omega \subset M$ be a Borel subset with $m_{M,f}(\Omega)\geq \eta$ and $d_{M}(\Omega,\bm)>0$.
Then we have
\begin{equation*}
\int^{d_{M}(\Omega,\bm)}_{0}\,e^{-\frac{K}{2}\,t^{2}-\Lambda\,t} \,dt \leq (1-\eta)\,  \int^{D}_{0}\,e^{-\frac{K}{2}\,t^{2}-\Lambda\,t} \,dt.
\end{equation*} 
\end{lem}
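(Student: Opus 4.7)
The plan is to imitate the proof of Lemma \ref{lem:key lemma} verbatim, but with the relative volume comparison of Theorem \ref{thm:relative volume comparison} in place of Theorem \ref{thm:volume comparison}. The underlying idea is that a set $\Omega$ sitting far from $\partial M$ forces the complementary tubular neighborhood $B_r(\bm)$ to have small measure, while the relative volume comparison forces $B_r(\bm)$ to have at least a controlled fraction of the total measure; combining these two bounds yields the inequality.

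First I would set $r := d_M(\Omega,\bm)$. Since $\Omega$ and the open $r$-neighborhood $U_r(\bm)$ are disjoint, and since $U_r(\bm)$ and $B_r(\bm)$ differ only by a null set, we get
\begin{equation*}
\eta \;\leq\; m_{M,f}(\Omega) \;\leq\; 1 - m_{M,f}(B_r(\bm)).
\end{equation*}
Next, the assumption $\IR M \leq D$ implies $M = B_D(\bm)$, so $m_{M,f}(B_D(\bm)) = 1$ (recall $m_{M,f}$ is a probability measure here). Applying Theorem \ref{thm:relative volume comparison} with the pair $(r,R) = (r,D)$ and then inverting gives
\begin{equation*}
m_{M,f}(B_r(\bm)) \;\geq\; \frac{\int_0^r e^{-\frac{K}{2}t^2 - \Lambda t}\,dt}{\int_0^D e^{-\frac{K}{2}t^2 - \Lambda t}\,dt}.
\end{equation*}
Substituting this lower bound into the previous inequality and rearranging produces exactly the claimed estimate.

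There is essentially no obstacle beyond checking that Theorem \ref{thm:relative volume comparison} is applicable: it requires $\bm$ compact and the curvature bounds $\ric^\infty_{f,\bm^\perp}\geq K$ and $H_{f,\bm}\geq \Lambda$, all of which are among our hypotheses. One small sanity check worth noting is that the denominator $\int_0^r e^{-\frac{K}{2}t^2-\Lambda t}\,dt$ is strictly positive because $r = d_M(\Omega,\bm) > 0$ by hypothesis, so the rearrangement is legitimate; the case $r = 0$ does not arise. This is why the author can honestly leave the proof to the reader.
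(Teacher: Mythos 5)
Your proof is correct and is exactly the argument the paper intends: it reproduces the proof of Lemma \ref{lem:key lemma} with Theorem \ref{thm:relative volume comparison} substituted for Theorem \ref{thm:volume comparison}, which is precisely what the text asks the reader to do. The extra remarks (the null-set identification of $U_r(\bm)$ with $B_r(\bm)$ and the positivity of the denominator) are fine; the only unstated triviality is that $r=d_M(\Omega,\bm)\leq \IR M\leq D$, so the comparison theorem applies with $R=D$.
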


We are now in a position to prove Theorem \ref{thm:infinite dimensional main theorem}.
\begin{proof}[Proof of Theorem \ref{thm:infinite dimensional main theorem}]
Let $\bm$ be compact.
Let us assume $\ric^{\infty}_{f,\bm^{\perp}}\geq K$ and $H_{f,\bm}\geq \Lambda$.
Suppose either (1) $K>0$ and $\Lambda \in \mathbb{R}$;
or (2) $K=0$ and $\Lambda >0$.
We note again that
in this case,
$\int^{\infty}_{0}\,e^{-\frac{K}{2}\,t^{2}-\Lambda\,t}\,dt$ is finite.

We can assume $\eta<1$.
We prove the desired statement by estimating $\DS((M,f);\eta)$ from above.
We may assume $\DS((M,f);\eta)>0$.
Let $\Omega \subset M$ denote a Borel subset with $m_{M,f}(\Omega)\geq \eta$ and $d_{M}(\Omega,\bm)>0$.
Lemma \ref{lem:infinite dimensional key lemma} leads us to
\begin{equation*}
\eta \leq 1-\frac{\int^{d_{M}(\Omega,\bm)}_{0}\,e^{-\frac{K}{2}\,t^{2}-\Lambda\,t} \,dt}{\int^{\IR M}_{0}\,e^{-\frac{K}{2}\,t^{2}-\Lambda\,t} \,dt}
       \leq \mathcal{S}_{K,\Lambda}(d_{M}(\Omega,\bm)),
\end{equation*}
where $\mathcal{S}_{K,\Lambda}:[0,\infty]\to [0,1]$ is a function defined as
\begin{equation*}
\mathcal{S}_{K,\Lambda}(r):=\frac{\int^{\infty}_{r}\,  e^{-\frac{K}{2}\,t^{2}-\Lambda\,t} \,dt}{\int^{\infty}_{0}\,e^{-\frac{K}{2}\,t^{2}-\Lambda\,t}\,dt}.
\end{equation*}
It follows that $d_{M}(\Omega,\bm) \leq \mathcal{S}^{-1}_{K,\Lambda}(\eta)$;
in particular,
$\DS((M,f);\eta)\leq \mathcal{S}^{-1}_{K,\Lambda}(\eta)$.
Here,
we can check that
\begin{align}\label{eq:calculation of infinite dimensional model}
\mathcal{S}^{-1}_{K,\Lambda}(\eta)&=\begin{cases}
                                                           \DS(G_{K,\Lambda};\eta) & \text{if $K>0$ and $\Lambda \in \mathbb{R}$}, \\
                                                           \DS(E_{\Lambda};\eta)     & \text{if $K=0$ and $\Lambda>0$},
                                                           \end{cases}\\ \notag
                                                        &=\begin{cases}
                                                           \OI(G_{K,\Lambda};-\eta) & \text{if $K>0$ and $\Lambda \in \mathbb{R}$}, \\
                                                           \OI(E_{\Lambda};-\eta)     & \text{if $K=0$ and $\Lambda>0$},
                                                           \end{cases}
\end{align}
where we used Proposition \ref{prop:Kazukawa} in the second equality.
By combining Lemma \ref{lem:observable inscribed radius is smaller than Dirichlet separation distance}, $\DS((M,f);\eta)\leq \mathcal{S}^{-1}_{K,\Lambda}(\eta)$ and (\ref{eq:calculation of infinite dimensional model}),
we obtain 
\begin{align*}
&\quad \,\,\OI((M,f);-\eta)\leq \DS((M,f);\eta)\\
                                        &\leq \begin{cases}
                                                   \OI(G_{K,\Lambda};-\eta) & \text{if $K>0$ and $\Lambda \in \mathbb{R}$}, \\
                                                   \OI(E_{\Lambda};-\eta)     & \text{if $K=0$ and $\Lambda>0$}.
                                                  \end{cases}
\end{align*}
Thus,
we conclude Theorem \ref{thm:infinite dimensional main theorem}.
\end{proof}

\section{Boundary concentration phenomena}\label{sec:Boundary concentration phenomena}

%%%%%%%%%%%%%%%%%%%%%%%
\subsection{Critical scale orders}\label{sec:Sequences of model spaces}

We first state the following assertion concerning the critical scale order of sequences of hemispheres (cf. Proposition \ref{prop:ball distribution laws}, Subsection 1.1 in \cite{GM}, Corollary 2.22 in \cite{Sh}, Theorem 8.1.1 and Corollary 8.5.7 in \cite{Sh1}):
\begin{thm}\label{thm:sequence of hemisphere}
For $\kappa>0$,
and for every $\eta \in (0,1]$
we have
\begin{equation}\label{eq:sequence of hemisphere}
\lim_{n\to \infty}\OI(B^{n}_{\kappa/n,0};-\eta)= \PI(G_{\kappa,0};1-\eta),
\end{equation}
where $G_{\kappa,0}$ is the half-Gaussian-model-space defined as $(\ref{eq:Gaussian model space})$,
and the right hand side of $(\ref{eq:sequence of hemisphere})$ is the partial inscribed radius of $G_{\kappa,0}$ defined as $(\ref{eq:partial inscribed radius})$.
In particular,
for a sequence $\{\kappa_{n}\}$ of $\kappa_{n}>0$,
the sequence $\{B^{n}_{\kappa_{n},0}\}$ is a boundary concentration family if and only if $n\,\kappa_{n}\to \infty$.
\end{thm}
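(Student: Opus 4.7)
The plan is to exploit the explicit formula $\OI(B^{n}_{\kappa,0};-\eta)=v^{-1}_{n,\kappa,0}(\eta)$ from Lemma \ref{lem:observable and separation on model}(1), so that (\ref{eq:sequence of hemisphere}) reduces to convergence of a one-dimensional tail distribution function and its quantile. I would first record
\begin{equation*}
\OI(B^{n}_{\kappa/n,0};-\eta)=v^{-1}_{n,\kappa/n,0}(\eta),\qquad v_{n,\kappa/n,0}(r)=\frac{\int_{r}^{C_{\kappa/n,0}}\cos^{n-1}(\sqrt{\kappa/n}\,t)\,dt}{\int_{0}^{C_{\kappa/n,0}}\cos^{n-1}(\sqrt{\kappa/n}\,t)\,dt},
\end{equation*}
together with the identity
\begin{equation*}
\PI(G_{\kappa,0};1-\eta)=\mathcal{S}_{\kappa,0}^{-1}(\eta),\qquad \mathcal{S}_{\kappa,0}(r):=\frac{\int_{r}^{\infty} e^{-\kappa t^{2}/2}\,dt}{\int_{0}^{\infty}e^{-\kappa t^{2}/2}\,dt},
\end{equation*}
which follows by optimizing the definition of $\PI$: on $I$ the inscribed radius of a Borel set equals its supremum, so the minimum of $\IR\Omega$ over subsets of measure at least $1-\eta$ is achieved by the initial interval $\Omega=[0,t_{\ast}]$ of measure exactly $1-\eta$.

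The heart of the proof is to show $v_{n,\kappa/n,0}(r)\to\mathcal{S}_{\kappa,0}(r)$ for every fixed $r\geq 0$. Proposition \ref{prop:ball distribution laws} already supplies the pointwise density convergence $\cos^{n-1}(\sqrt{\kappa/n}\,t)\to e^{-\kappa t^{2}/2}$, and the elementary bound $\cos(x)\leq e^{-x^{2}/2}$ on $[0,\pi/2]$ gives $\cos^{n-1}(\sqrt{\kappa/n}\,t)\leq e^{-(n-1)\kappa t^{2}/(2n)}\leq e^{-\kappa t^{2}/4}$ throughout $[0,C_{\kappa/n,0}]$ for $n\geq 2$; this uniform majorant justifies dominated convergence in both numerator and denominator. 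Since $\mathcal{S}_{\kappa,0}$ and each $v_{n,\kappa/n,0}$ are continuous and strictly decreasing, the pointwise convergence upgrades, by a standard sandwich argument around $r_{*}=\mathcal{S}_{\kappa,0}^{-1}(\eta)$, to $v^{-1}_{n,\kappa/n,0}(\eta)\to r_{*}$, which is (\ref{eq:sequence of hemisphere}). The principal technical obstacle is precisely this dominated-convergence step: the interval of integration $[0,C_{\kappa/n,0}]$ has diverging length $\pi\sqrt{n}/(2\sqrt{\kappa})$, but because $\sqrt{\kappa/n}\,t$ stays in $[0,\pi/2]$ exactly on that interval, the estimate $\cos(x)\leq e^{-x^{2}/2}$ produces a single integrable majorant on all of $[0,\infty)$ that works uniformly in $n$.

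For the ``in particular'' clause I would use the monotonicity $\kappa_{1}\leq\kappa_{2}\Rightarrow \OI(B^{n}_{\kappa_{1},0};-\eta)\geq \OI(B^{n}_{\kappa_{2},0};-\eta)$, which is immediate from Theorem \ref{thm:main theorem}(1) applied to $B^{n}_{\kappa_{2},0}$ with parameters $(\kappa_{1},0)$. Assuming $n\kappa_{n}\to\infty$, for any $A>0$ eventually $\kappa_{n}\geq A/n$, so $\OI(B^{n}_{\kappa_{n},0};-\eta)\leq \OI(B^{n}_{A/n,0};-\eta)\to \mathcal{S}_{A,0}^{-1}(\eta)$ by the first part; since $\mathcal{S}_{A,0}^{-1}(\eta)=c_{\eta}/\sqrt{A}\to 0$ as $A\to\infty$ for fixed $\eta>0$ (with $c_{\eta}$ the standard Gaussian quantile at level $\eta$), boundary concentration follows. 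Conversely, if $n\kappa_{n}\not\to\infty$, pick a subsequence with $n_{k}\kappa_{n_{k}}\leq C$; then $\OI(B^{n_{k}}_{\kappa_{n_{k}},0};-\eta)\geq \OI(B^{n_{k}}_{C/n_{k},0};-\eta)\to \PI(G_{C,0};1-\eta)>0$, contradicting concentration.
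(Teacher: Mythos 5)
Your argument is correct, but it takes a genuinely different route from the paper's. The paper does not touch the integrals at all: it observes that the round sphere $M^{n}_{\kappa/n}$ is the double of the hemisphere $B^{n}_{\kappa/n,0}$, deduces $\OI(B^{n}_{\kappa/n,0};-\eta)=\DS(B^{n}_{\kappa/n,0};\eta)=\tfrac{1}{2}\SE(M^{n}_{\kappa/n};\eta/2,\eta/2)$ from Lemma \ref{lem:observable and separation on model}, and then quotes the classical Maxwell--Boltzmann-type limit of separation distances of spheres from Shioya's book to identify the limit with the half-Gaussian quantile; the ``in particular'' clause is handled by the exact scaling identity $B^{n}_{\kappa_{n},0}=\sqrt{\kappa(n\kappa_{n})^{-1}}\,B^{n}_{\kappa/n,0}$ together with $\OI(cX;-\eta)=c\,\OI(X;-\eta)$. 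You instead prove the one-dimensional limit from scratch: starting from the same explicit formula $\OI(B^{n}_{\kappa/n,0};-\eta)=v^{-1}_{n,\kappa/n,0}(\eta)$, you establish $v_{n,\kappa/n,0}\to\mathcal{S}_{\kappa,0}$ pointwise by dominated convergence (the majorant $\cos x\le e^{-x^{2}/2}$ on $[0,\pi/2]$, giving $\cos^{n-1}(\sqrt{\kappa/n}\,t)\le e^{-\kappa t^{2}/4}$ for $n\ge 2$, is exactly the right uniform bound on the growing interval) and then pass to inverses by monotonicity; your ``in particular'' clause uses the comparison monotonicity in $\kappa$ from Theorem \ref{thm:main theorem}(1) plus the Gaussian quantile scaling $\mathcal{S}^{-1}_{A,0}(\eta)=\mathcal{S}^{-1}_{1,0}(\eta)/\sqrt{A}$, rather than the exact rescaling of the spaces. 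Both are valid; the paper's proof is shorter because it outsources the analytic limit to a known theorem, while yours is self-contained and makes the Poincar\'e--L\'evy limit explicit at the level of the densities (essentially re-proving the quantitative content of Proposition \ref{prop:ball distribution laws} in the form needed for quantiles). The only cosmetic caveat is that your sandwich argument for inverses is stated for $\eta\in(0,1)$; the endpoint $\eta=1$ is trivial since both sides vanish there, which is worth one sentence.
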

\begin{proof}
Let us regard the $n$-dimensional standard sphere $M^{n}_{n/\kappa}$ with constant curvature $n/\kappa$ as an mm-space defined as (\ref{eq:normalized metric measure space with boundary}) (see Remark \ref{rem:mm-spaces}).
We observe that
$M^{n}_{n/\kappa}$ is the double of the hemisphere $B^{n}_{\kappa/n,0}$.
Based on Lemma \ref{lem:observable and separation on model} and this geometric observation,
we obtain
\begin{equation*}
\OI(B^{n}_{\kappa/n,0};-\eta)= \DS(B^{n}_{\kappa/n,0};\eta)=\frac{1}{2}\SE(M^{n}_{\kappa/n};\eta/2,\eta/2),
\end{equation*}
where $\SE(M^{n}_{\kappa/n};\eta/2,\eta/2)$ is the $(\eta/2,\eta/2)$-separation distance of $M^{n}_{\kappa/n}$ defined as (\ref{eq:Gromov separation distance}) (see Remark \ref{rem:Gromov separation distance}).
It is well-known that
the right hand side tends to $r>0$ determined by
\begin{equation*}
\frac{1-\eta}{2}=\frac{\int^{r}_{0}\,e^{-\frac{\kappa}{2}\,t^{2}}\,    dt}{\int^{\infty}_{-\infty}\,e^{-\frac{\kappa}{2}\,t^{2}}\,    dt}
\end{equation*}
as $n\to \infty$ (see e.g., Theorem 2.1 and Lemma 2.3 in \cite{Sh}).
We see that $r$ is equal to $\PI(G_{\kappa,0};1-\eta)$,
and hence (\ref{eq:sequence of hemisphere}).

For a metric measure space with boundary $X=(X,d_{X},\mu_{X})$,
\begin{equation}\label{eq:scaling formula}
\OI(cX;-\eta)=c \OI(X;-\eta)
\end{equation}
for every $c>0$,
where we set $cX:=(X,c d_{X},\mu_{X})$ (cf. Proposition 2.19 in \cite{Sh}).
Therefore,
by (\ref{eq:sequence of hemisphere}) and $B^{n}_{\kappa_{n},0}=\sqrt{\kappa\,(n\,\kappa_{n})^{-1}} \,B^{n}_{\kappa/n,0}$,
we arrive at the desired conclusion.
\end{proof}

We next investigate sequences of Euclidean balls.
\begin{thm}\label{thm:sequence of Euclidean ball}
For $\lambda>0$,
and for every $\eta \in (0,1]$
we have
\begin{equation}\label{eq:sequence of Euclidean ball}
\lim_{n\to \infty}\OI(B^{n}_{0,\lambda/n};-\eta)= \PI(E_{\lambda};1-\eta)=\frac{1}{\lambda}\log \frac{1}{\eta},
\end{equation}
where $E_{\lambda}$ is the exponential-model-space defined as $(\ref{eq:exponential model space})$.
In particular,
for a sequence $\{\lambda_{n}\}$ of $\lambda_{n}>0$,
the sequence $\{B^{n}_{0,\lambda_{n}}\}$ is a boundary concentration family if and only if $n\,\lambda_{n}\to \infty$.
\end{thm}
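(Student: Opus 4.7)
\medskip

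The plan is a direct computation. By Lemma \ref{lem:observable and separation on model}(1) applied with $\kappa = 0$, $\lambda$ replaced by $\lambda/n$, we have $\OI(B^{n}_{0,\lambda/n};-\eta) = v^{-1}_{n,0,\lambda/n}(\eta)$. Since $s_{0,\lambda/n}(t) = 1-(\lambda/n)t$ and $C_{0,\lambda/n} = n/\lambda$, a direct evaluation of the defining integrals in (\ref{eq:ball function}) (substitute $u = 1-(\lambda/n)t$) yields
\begin{equation*}
v_{n,0,\lambda/n}(r) = \left(1-\frac{\lambda r}{n}\right)^{n}.
\end{equation*}
Inverting gives the explicit formula
\begin{equation*}
\OI\bigl(B^{n}_{0,\lambda/n};-\eta\bigr) = \frac{n}{\lambda}\bigl(1-\eta^{1/n}\bigr).
\end{equation*}

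Letting $n\to\infty$ and using $\eta^{1/n} = 1 + (\log\eta)/n + O(1/n^{2})$, this tends to $-(\log\eta)/\lambda = (1/\lambda)\log(1/\eta)$. For the other equality, in the exponential-model-space $E_{\lambda}$ the boundary of $I$ is $\{0\}$, so for any Borel $\Omega\subset I$ one has $\IR\Omega = \sup_{x\in\Omega} x$. To minimize this subject to the mass constraint $m_{E_{\lambda}}(\Omega)\geq 1-\eta$, the optimal $\Omega$ is an initial interval $[0,t_{0}]$ where $1-e^{-\lambda t_{0}} = 1-\eta$, i.e.\ $t_{0} = (1/\lambda)\log(1/\eta)$. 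A short argument (any other $\Omega$ of the same mass can be rearranged toward the origin without increasing its supremum) shows this is indeed the infimum, so $\PI(E_{\lambda};1-\eta) = (1/\lambda)\log(1/\eta)$. This establishes (\ref{eq:sequence of Euclidean ball}).

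For the second assertion, I will exploit the scaling relation (\ref{eq:scaling formula}). Since a Euclidean ball of radius $1/\lambda_{n}$ is obtained from one of radius $n/\lambda$ by scaling by the factor $\lambda/(n\lambda_{n})$, the space $B^{n}_{0,\lambda_{n}}$ is isomorphic in the sense of Definition \ref{defi:dominance} to $(\lambda/(n\lambda_{n}))\,B^{n}_{0,\lambda/n}$. Therefore
\begin{equation*}
\OI\bigl(B^{n}_{0,\lambda_{n}};-\eta\bigr) = \frac{\lambda}{n\lambda_{n}}\cdot\frac{n}{\lambda}\bigl(1-\eta^{1/n}\bigr) = \frac{1-\eta^{1/n}}{\lambda_{n}} \sim \frac{\log(1/\eta)}{n\lambda_{n}}
\end{equation*}
as $n\to\infty$. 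This vanishes for every $\eta\in(0,1)$ precisely when $n\lambda_{n}\to\infty$, which by definition characterizes $\{B^{n}_{0,\lambda_{n}}\}$ being a boundary concentration family.

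There is no serious obstacle; the proof reduces to the explicit integral computation in Step 1 and the elementary asymptotic $\eta^{1/n}\to 1$ at rate $(\log\eta)/n$. The only point requiring a brief justification is the rearrangement claim in Step 2, but this is immediate because the density $\lambda e^{-\lambda t}$ is monotone decreasing on $I$, so sliding any portion of $\Omega$ toward $0$ weakly increases mass while weakly decreasing $\sup\Omega$.
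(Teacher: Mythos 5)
Your proposal is correct and follows essentially the same route as the paper: compute $v_{n,0,\lambda/n}(r)=(1-\lambda r/n)^{n}$, invert via Lemma \ref{lem:observable and separation on model} to get $\OI(B^{n}_{0,\lambda/n};-\eta)=\frac{n}{\lambda}(1-\eta^{1/n})\to\frac{1}{\lambda}\log\frac{1}{\eta}$, identify $\PI(E_{\lambda};1-\eta)$ by the defining integral, and handle general $\lambda_{n}$ by the scaling relation (\ref{eq:scaling formula}). The extra rearrangement justification you give for $\PI(E_{\lambda};1-\eta)$ is sound (and can be shortened to the observation that $\Omega\subset[0,\IR\Omega]$ forces $1-\eta\leq 1-e^{-\lambda\IR\Omega}$).
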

\begin{proof}
By $s_{0,\lambda/n}(t)=1-\lambda\,n^{-1}\,t$,
we have $v_{n,0,\lambda/n}(r)=(1-\lambda\,n^{-1}\,r)^{n}$,
where $v_{n,0,\lambda}$ is defined as (\ref{eq:ball function}).
Lemma \ref{lem:observable and separation on model} implies that
\begin{equation*}
\OI(B^{n}_{0,\lambda/n};-\eta)=\frac{n}{\lambda}\,(1-\eta^{\frac{1}{n}})\to \frac{1}{\lambda}\log \frac{1}{\eta}
\end{equation*}
as $n\to \infty$.
On the other hand,
$\PI(E_{\lambda};1-\eta)$ is equal to a positive number $r>0$ determined as $1-\eta=\int^{r}_{0}\,\lambda\,e^{-\lambda\,t}\,dt$;
in particular,
$\PI(E_{\lambda};1-\eta)=\lambda^{-1} \log \eta^{-1}$.
This proves the equalities (\ref{eq:sequence of Euclidean ball}).
Due to $B^{n}_{0,\lambda_{n}}=\lambda\,(n\,\lambda_{n})^{-1} \,B^{n}_{0,\lambda/n}$ and (\ref{eq:scaling formula}),
we complete the proof.
\end{proof}

We also provide the following result for warped-product-model-spaces:
\begin{thm}\label{thm:warped product model Levy family}
For $\kappa<0$ and $\lambda=\sqrt{\vert \kappa \vert}$,
and for every $\eta \in (0,1]$
\begin{equation*}\label{eq:warped product Levy family}
\lim_{n\to \infty}\OI(M^{n}_{\kappa/n^{2},\lambda/n};-\eta)= \PI(E_{\lambda};1-\eta)=\frac{1}{\lambda}\log \frac{1}{\eta}.
\end{equation*}
Moreover,
for a sequence $\{\kappa_{n}\}$ of $\kappa_{n}<0$,
and for a sequence $\{\lambda_{n}\}$ of $\lambda_{n}=\sqrt{\vert \kappa_{n} \vert}$,
the sequence $\{M^{n}_{\kappa_{n},\lambda_{n}}\}$ is a boundary concentration family if and only if we have $n\,\lambda_{n}\to \infty$ as $n\to \infty$.
\end{thm}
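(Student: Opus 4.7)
The plan is to mimic the arguments used for Theorems \ref{thm:sequence of hemisphere} and \ref{thm:sequence of Euclidean ball}: compute the observable inscribed radius of the model $M^{n}_{\kappa/n^{2},\lambda/n}$ exactly by Lemma \ref{lem:observable and separation on model}, take the limit, and then exploit the scale invariance recorded in (\ref{eq:scaling formula}) to reduce a general sequence $\{M^{n}_{\kappa_{n},\lambda_{n}}\}$ to this distinguished one.

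First, note that $\lambda/n=\sqrt{\vert \kappa/n^{2}\vert}$, so the parameters $\kappa/n^{2}$ and $\lambda/n$ fall into case (2) of Lemma \ref{lem:observable and separation on model}. Applying that lemma with those parameters yields the closed form
\begin{equation*}
\OI\bigl(M^{n}_{\kappa/n^{2},\lambda/n};-\eta\bigr)=\frac{1}{(n-1)\,(\lambda/n)}\,\log\frac{1}{\eta}=\frac{n}{(n-1)\,\lambda}\,\log\frac{1}{\eta}.
\end{equation*}
Letting $n\to\infty$ gives the limit $\lambda^{-1}\log(1/\eta)$, which by the computation of $\PI(E_{\lambda};1-\eta)$ carried out in the proof of Theorem \ref{thm:sequence of Euclidean ball} is exactly $\PI(E_{\lambda};1-\eta)$. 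This proves the displayed limit identity for every $\eta\in(0,1]$, and Proposition \ref{prop:exponential distribution laws} gives the intuitive picture that the measures $m_{I,\rho_{\partial M^{n}_{\kappa/n^{2},\lambda/n}}}$ weakly converge to the measure of $E_{\lambda}$.

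For the second statement, observe that the warped-product metric $dt^{2}+e^{-2(\lambda/n)t}\,ds_{n-1}^{2}$ rescaled by the factor $c:=\lambda/(n\lambda_{n})$ gives, after pulling back by $t\mapsto t/c$, precisely the warped-product metric of $M^{n}_{\kappa_{n},\lambda_{n}}$ (one checks that $(\lambda/n)/c=\lambda_{n}$ and $(\kappa/n^{2})/c^{2}=-\lambda_{n}^{2}=\kappa_{n}$). Hence $M^{n}_{\kappa_{n},\lambda_{n}}=c\,M^{n}_{\kappa/n^{2},\lambda/n}$ as metric measure spaces with boundary, and by (\ref{eq:scaling formula}),
\begin{equation*}
\OI\bigl(M^{n}_{\kappa_{n},\lambda_{n}};-\eta\bigr)=\frac{\lambda}{n\lambda_{n}}\cdot\frac{n}{(n-1)\,\lambda}\,\log\frac{1}{\eta}=\frac{1}{(n-1)\,\lambda_{n}}\,\log\frac{1}{\eta}.
\end{equation*}

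From this closed form the characterization is immediate: the right-hand side tends to $0$ for every $\eta\in(0,1)$ if and only if $(n-1)\,\lambda_{n}\to\infty$, which is equivalent to $n\,\lambda_{n}\to\infty$. I do not foresee any serious obstacle beyond verifying the scaling identity $M^{n}_{\kappa_{n},\lambda_{n}}=c\,M^{n}_{\kappa/n^{2},\lambda/n}$ carefully; the analytic content is entirely packaged into Lemma \ref{lem:observable and separation on model}, which already computed the relevant quantity explicitly in case (2).
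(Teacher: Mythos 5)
Your computation of $\OI(M^{n}_{\kappa/n^{2},\lambda/n};-\eta)$ via case (2) of Lemma \ref{lem:observable and separation on model}, the passage to the limit, and the identification of the limit with $\PI(E_{\lambda};1-\eta)$ from the second equality of (\ref{eq:sequence of Euclidean ball}) is exactly the paper's argument for the first assertion. For the second assertion the paper is more direct: it simply applies Lemma \ref{lem:observable and separation on model}(2) with the parameters $\kappa_{n},\lambda_{n}$ themselves (the lemma holds for any $\kappa<0$, $\lambda=\sqrt{\vert\kappa\vert}$), giving $\OI(M^{n}_{\kappa_{n},\lambda_{n}};-\eta)=\tfrac{1}{(n-1)\lambda_{n}}\log\tfrac{1}{\eta}$ at once. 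Your scaling detour reaches the same formula, but the identity $M^{n}_{\kappa_{n},\lambda_{n}}=c\,M^{n}_{\kappa/n^{2},\lambda/n}$ is not literally true: rescaling the distance by $c$ multiplies the metric tensor by $c^{2}$, and after the substitution $t\mapsto t/c$ one obtains $ds^{2}+c^{2}e^{-2\lambda_{n}s}\,ds_{n-1}^{2}$, i.e.\ a warped product whose cross-section is a sphere of radius $c$ rather than the unit sphere, so the two spaces are not isometric. This does not affect your conclusion (the closed form for $\OI$ is insensitive to the radius of the fibre, as the direct application of the lemma shows), but the step as written is the one place where your argument needs repair; replacing it with the direct application of Lemma \ref{lem:observable and separation on model}(2) removes the issue entirely.
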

\begin{proof}
Lemma \ref{lem:observable and separation on model} yields
\begin{equation*}
\OI(M^{n}_{\kappa/n^{2},\lambda/n};-\eta)=\frac{n}{(n-1)\lambda}\,\log \frac{1}{\eta} \to \frac{1}{\lambda}\log \frac{1}{\eta}
\end{equation*}
as $n\to \infty$.
The second equality of (\ref{eq:sequence of Euclidean ball}) tells us the desired equalities.
The later assertion also immediately follows from Lemma \ref{lem:observable and separation on model}
\end{proof}

Let us give a proof of Corollary \ref{cor:ball model Levy family}.
\begin{proof}[Proof of Corollary \ref{cor:ball model Levy family}]
Let $\kappa$ and $\lambda$ satisfy the convex-ball-condition.
We will prove that
$\{\ball\}$ is a boundary concentration family.

We first consider the case of $\kappa>0$.
In this case,
we have $\lambda \geq 0$.
Due to Theorem \ref{thm:main theorem},
we have $\OI(\ball;-\eta) \leq \OI(B^{n}_{\kappa,0};-\eta)$.
Furthermore,
Theorem \ref{thm:sequence of hemisphere} tells us that
$\{B^{n}_{\kappa,0}\}$ is a boundary concentration family.
Hence,
$\{\ball\}$ is also a boundary concentration family.

When $\kappa=0$,
the desired statement follows from Theorem \ref{thm:sequence of Euclidean ball}.

In the case of $\kappa<0$,
it holds that $\lambda>\sqrt{\vert \kappa \vert}$.
In virtue of Theorem \ref{thm:main theorem} and Lemma \ref{lem:observable and separation on model},
we obtain
\begin{equation*}
\OI(\ball;-\eta) \leq \OI\bigl(M^{n}_{\kappa,\sqrt{ \vert \kappa \vert}};-\eta\bigl) = \frac{1}{(n-1)\sqrt{ \vert \kappa \vert}}\,\log \frac{1}{\eta};
\end{equation*}
in particular,
$\OI(\ball;-\eta)\to 0$ as $n\to \infty$.
Thus,
we complete the proof of Corollary \ref{cor:ball model Levy family}.
\end{proof}

%%%%%%%%%%%%%%%%%%%%%%%
\subsection{Positive dimensional cases}\label{sec:Positive dimensional cases}
In this subsection,
we summarize corollaries of Theorem \ref{thm:weighted main theorem}.
Hereafter,
let $\{(M_{n},f_{n})\}$ be a sequence of metric measure spaces with compact boundary defined as $(\ref{eq:weighted Riemannian manifold})$.
We denote by $\dim M_{n}$ the dimension of $M_{n}$.
Theorem \ref{thm:weighted main theorem} together with Theorems \ref{thm:sequence of hemisphere}, \ref{thm:sequence of Euclidean ball}, \ref{thm:warped product model Levy family} and Corollary \ref{cor:ball model Levy family} leads us to the following:
\begin{cor}\label{cor:hemisphere corollary}
Let $\{N_{n}\}$ be a sequence of integers with $N_{n} \geq \dim M_{n}$,
and let $\{\kappa_{n}\}$ be a sequence of $\kappa_{n}>0$.
Assume $\ric^{N_{n}}_{f_{n},\partial M_{n}^{\perp}}\geq (N_{n}-1)\kappa_{n}$ and $H_{f_{n},\bm_{n}}\geq 0$ for each $n$.
If $N_{n}\,\kappa_{n}\to \infty$,
then $\{(M_{n},f_{n})\}$ is a boundary concentration family.
\end{cor}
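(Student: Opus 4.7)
The plan is to combine Theorem \ref{thm:weighted main theorem} with the hemisphere concentration result of Theorem \ref{thm:sequence of hemisphere} via a simple scaling. First I would apply Theorem \ref{thm:weighted main theorem}(1) with the choice $\lambda_n=0$: since $\kappa_n>0$ ensures the ball-condition, and since the hypothesis $H_{f_n,\bm_n}\geq 0=(N_n-1)\cdot 0$ is satisfied, the theorem applies. Using Lemma \ref{lem:observable and separation on model}(1) to identify $v^{-1}_{N_n,\kappa_n,0}(\eta)$ with $\OI(B^{N_n}_{\kappa_n,0};-\eta)$, this yields
\begin{equation*}
\OI((M_n,f_n);-\eta)\leq \OI(B^{N_n}_{\kappa_n,0};-\eta).
\end{equation*}

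Next I would reduce the right-hand side to the canonical hemisphere $B^{N_n}_{1/N_n,0}$ by rescaling. Multiplying a Riemannian metric by $c>0$ divides the sectional curvature by $c^{2}$, while the \emph{normalized} volume measure is preserved by the induced isometry; hence $cB^{N}_{\kappa,0}$ is isomorphic as a metric measure space (in the sense of Definition \ref{defi:dominance}) to $B^{N}_{\kappa/c^{2},0}$. Combining Lemma \ref{lem:monotonicity of the observable inscribed radius} with the scaling formula (\ref{eq:scaling formula}), and choosing base curvature $1/N_n$ with $c=1/\sqrt{N_n\kappa_n}$, I obtain the key identity
\begin{equation*}
\OI(B^{N_n}_{\kappa_n,0};-\eta)=\frac{1}{\sqrt{N_n\kappa_n}}\,\OI\bigl(B^{N_n}_{1/N_n,0};-\eta\bigr).
\end{equation*}

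Finally I would invoke Theorem \ref{thm:sequence of hemisphere} with $\kappa=1$: the sequence $\{\OI(B^{N}_{1/N,0};-\eta)\}_{N\geq 1}$ converges to $\PI(G_{1,0};1-\eta)<\infty$, and therefore is bounded by some $C(\eta)>0$ independent of $N$. Consequently
\begin{equation*}
\OI((M_n,f_n);-\eta)\leq \frac{C(\eta)}{\sqrt{N_n\kappa_n}}\to 0,
\end{equation*}
so by Definition \ref{defi:Dirichlet Levy family} the sequence $\{(M_n,f_n)\}$ is a boundary concentration family. I do not foresee a serious obstacle; the only subtlety is the choice of base curvature $1/N_n$ in the scaling step, which is precisely what causes the $N_n$-dependent factor to collapse into the bounded universal quantity $\OI(B^{N_n}_{1/N_n,0};-\eta)$. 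In particular, this argument works uniformly without any case analysis on whether $N_n$ itself tends to infinity or remains bounded (in the latter case $\kappa_n\to\infty$ is automatic, and the boundedness of $\OI(B^{N_n}_{1/N_n,0};-\eta)$ over any finite range of $N_n$ is trivial).
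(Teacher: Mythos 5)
Your argument is correct and follows exactly the route the paper indicates for this corollary: Theorem \ref{thm:weighted main theorem}(1) with $\lambda=0$, the identification via Lemma \ref{lem:observable and separation on model}, the scaling identity (\ref{eq:scaling formula}), and Theorem \ref{thm:sequence of hemisphere}. The paper states the corollary without a written proof, and your rescaling to $B^{N_n}_{1/N_n,0}$ correctly supplies the one detail it leaves implicit, namely how to pass from the fixed-dimension-index statement of Theorem \ref{thm:sequence of hemisphere} to an arbitrary sequence $\{B^{N_n}_{\kappa_n,0}\}$ with $N_n\kappa_n\to\infty$.
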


\begin{cor}
Let $\{N_{n}\}$ be a sequence of integers with $N_{n} \geq \dim M_{n}$,
and let $\{\lambda_{n}\}$ be a sequence of $\lambda_{n}>0$.
We assume $\ric^{N_{n}}_{f_{n},\partial M_{n}^{\perp}}\geq 0$ and $H_{f_{n},\bm_{n}}\geq (N_{n}-1)\lambda_{n}$ for each $n$.
If $N_{n}\,\lambda_{n}\to \infty$,
then $\{(M_{n},f_{n})\}$ is a boundary concentration family.
\end{cor}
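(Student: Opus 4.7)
The plan is to apply case (1) of Theorem \ref{thm:weighted main theorem} with $\kappa=0$ and $\lambda=\lambda_{n}>0$, which satisfies the ball-condition (case (2) in the definition). This gives the estimate
\begin{equation*}
\OI((M_{n},f_{n});-\eta)\leq v^{-1}_{N_{n},0,\lambda_{n}}(\eta)
\end{equation*}
for every $\eta\in(0,1]$ and every $n$. Since the observable inscribed radius of the ball-model-space is exactly $v^{-1}_{N_{n},0,\lambda_{n}}(\eta)$ (Lemma \ref{lem:observable and separation on model}), this is nothing but $\OI(B^{N_{n}}_{0,\lambda_{n}};-\eta)$; thus the content is that, as expected, the ball-model-space controls the concentration.

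Next I would compute $v^{-1}_{N_{n},0,\lambda_{n}}(\eta)$ explicitly. Since $s_{0,\lambda}(t)=1-\lambda t$ and $C_{0,\lambda}=1/\lambda$, a direct integration (as in the proof of Theorem \ref{thm:sequence of Euclidean ball}) yields $v_{N,0,\lambda}(r)=(1-\lambda r)^{N}$, hence
\begin{equation*}
v^{-1}_{N,0,\lambda}(\eta)=\frac{1-\eta^{1/N}}{\lambda}.
\end{equation*}
Applying the elementary inequality $1-e^{-x}\le x$ for $x\ge 0$ with $x=-N^{-1}\log\eta$ gives $1-\eta^{1/N}\le N^{-1}\log(1/\eta)$, so
\begin{equation*}
\OI((M_{n},f_{n});-\eta)\leq \frac{1-\eta^{1/N_{n}}}{\lambda_{n}}\leq \frac{1}{N_{n}\,\lambda_{n}}\,\log\frac{1}{\eta}.
\end{equation*}

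The assumption $N_{n}\lambda_{n}\to\infty$ then forces the right-hand side to tend to $0$ for each fixed $\eta>0$, which is exactly the definition of a boundary concentration family (Definition \ref{defi:Dirichlet Levy family}). There is no real obstacle here; the only thing to watch is that the bound $N(1-\eta^{1/N})\le\log(1/\eta)$ is uniform in $N\ge 1$, so the argument works whether $N_{n}$ remains bounded (in which case $\lambda_{n}\to\infty$ does the job) or $N_{n}\to\infty$ (in which case $N_{n}(1-\eta^{1/N_{n}})\to\log(1/\eta)$ is bounded and $\lambda_{n}$ need not blow up). This parallels the proof of Corollary \ref{cor:hemisphere corollary} with the roles of the curvature bound and the mean-curvature bound interchanged, using the exponential-model asymptotics from Theorem \ref{thm:sequence of Euclidean ball} instead of the Gaussian ones from Theorem \ref{thm:sequence of hemisphere}.
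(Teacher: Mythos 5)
Your proof is correct and follows exactly the route the paper indicates for this corollary: apply case (1) of Theorem \ref{thm:weighted main theorem} with $\kappa=0$, $\lambda=\lambda_{n}$ (the ball-condition holds since $\kappa=0$ and $\lambda_{n}>0$), and then use the explicit formula $v^{-1}_{N,0,\lambda}(\eta)=\lambda^{-1}(1-\eta^{1/N})$ from the computation behind Theorem \ref{thm:sequence of Euclidean ball}. The elementary bound $1-\eta^{1/N}\leq N^{-1}\log(1/\eta)$ cleanly handles both the bounded-$N_{n}$ and unbounded-$N_{n}$ regimes, so nothing is missing.
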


\begin{cor}\label{cor:finite dimensional corollary}
Let $\{N_{n}\}$ be a sequence of integers with $N_{n} \geq \dim M_{n}$,
and let $\{\kappa_{n}\}$ be a sequence of $\kappa_{n}<0$.
For each $n$,
we put $\lambda_{n}:=\sqrt{\vert \kappa_{n} \vert}$.
We assume $\ric^{N_{n}}_{f_{n},\partial M_{n}^{\perp}}\geq (N_{n}-1)\kappa_{n}$ and $H_{f_{n},\bm_{n}}\geq (N_{n}-1)\lambda_{n}$.
If $N_{n}\,\lambda_{n}\to \infty$,
then $\{(M_{n},f_{n})\}$ is a boundary concentration family.
\end{cor}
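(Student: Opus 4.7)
The plan is to apply part (2) of Theorem \ref{thm:weighted main theorem} directly to the sequence $\{(M_n, f_n)\}$ with the given curvature parameters. Since for each $n$ we have $\kappa_n < 0$ and $\lambda_n = \sqrt{|\kappa_n|}$, and the curvature hypotheses $\ric^{N_n}_{f_n, \partial M_n^\perp} \geq (N_n - 1)\kappa_n$ and $H_{f_n, \partial M_n} \geq (N_n - 1)\lambda_n$ with $N_n \geq \dim M_n$ put us exactly in the setting of case (2) of that theorem, we obtain
\begin{equation*}
\OI((M_n, f_n); -\eta) \leq \frac{1}{(N_n - 1)\lambda_n} \log \frac{1}{\eta}
\end{equation*}
for every $\eta \in (0,1]$ and every $n$.

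Next I would fix $\eta \in (0, 1]$ and pass to the limit. The assumption $N_n \lambda_n \to \infty$ combined with $N_n \geq \dim M_n \geq 2$ (which forces $N_n - 1 \geq N_n / 2$) gives $(N_n - 1)\lambda_n \to \infty$, so the right-hand side above tends to $0$ as $n \to \infty$. Hence $\OI((M_n, f_n); -\eta) \to 0$ for every $\eta > 0$, which is exactly the definition of a boundary concentration family in Definition \ref{defi:Dirichlet Levy family}.

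There is essentially no obstacle here: the statement is a direct corollary of the upper bound established in Theorem \ref{thm:weighted main theorem}(2), in the same spirit as the derivations of Corollaries \ref{cor:hemisphere corollary} and the intermediate corollary for mean curvature lower bounds. The only minor care needed is to ensure $N_n - 1$ stays bounded away from $0$, which is automatic from the dimension constraint on the manifolds $M_n$ assumed throughout the paper.
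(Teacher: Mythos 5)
Your proof is correct and follows exactly the route the paper intends: Corollary \ref{cor:finite dimensional corollary} is a direct consequence of the bound $\OI((M_{n},f_{n});-\eta)\leq \frac{1}{(N_{n}-1)\lambda_{n}}\log\frac{1}{\eta}$ from Theorem \ref{thm:weighted main theorem}(2), combined with the observation that $N_{n}\geq \dim M_{n}\geq 2$ makes $(N_{n}-1)\lambda_{n}\to\infty$ equivalent to $N_{n}\lambda_{n}\to\infty$. No gaps.
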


\begin{cor}
Let $\{N_{n}\}$ be a sequence of integers with $N_{n} \geq \dim M_{n}$.
Assume $\ric^{N_{n}}_{f_{n},\partial M_{n}^{\perp}}\geq (N_{n}-1)\kappa$ and $H_{f_{n},\bm_{n}}\geq (N_{n}-1)\lambda$ for each $n$.
We also assume that
one of the following holds:
\begin{enumerate}
\item $\kappa$ and $\lambda$ satisfy the convex-ball-condition;
\item $\kappa<0$ and $\lambda=\sqrt{\vert \kappa \vert}$.
\end{enumerate}
If $N_{n}\to \infty$,
then $\{(M_{n},f_{n})\}$ is a boundary concentration family.
\end{cor}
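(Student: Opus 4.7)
The plan is to apply Theorem \ref{thm:weighted main theorem} to each $(M_n,f_n)$ with the parameters $N_n,\kappa,\lambda$, obtaining an explicit upper bound on $\OI((M_n,f_n);-\eta)$, and then to check that this bound tends to $0$ as $N_n\to\infty$ in each of the two sub-cases.

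In case (2), where $\kappa<0$ and $\lambda=\sqrt{\vert\kappa\vert}$, part (2) of Theorem \ref{thm:weighted main theorem} yields
\begin{equation*}
\OI((M_n,f_n);-\eta)\leq \frac{1}{(N_n-1)\lambda}\,\log\frac{1}{\eta},
\end{equation*}
and since $\lambda>0$, the right-hand side goes to $0$ as $N_n\to\infty$. This finishes case (2) immediately.

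For case (1), where $\kappa$ and $\lambda$ satisfy the convex-ball-condition, part (1) of Theorem \ref{thm:weighted main theorem} gives
\begin{equation*}
\OI((M_n,f_n);-\eta)\leq v^{-1}_{N_n,\kappa,\lambda}(\eta).
\end{equation*}
The key observation is that by Lemma \ref{lem:observable and separation on model}, this right-hand side is exactly $\OI(B^{N_n}_{\kappa,\lambda};-\eta)$. Now Corollary \ref{cor:ball model Levy family} already asserts that, under the convex-ball-condition, the sequence $\{\ball\}_{n}$ is a boundary concentration family, i.e.\ $\OI(B^n_{\kappa,\lambda};-\eta)\to 0$ as the dimension $n\to\infty$. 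Applying this along the subsequence of dimensions $\{N_n\}$, which tends to $\infty$ by hypothesis, we conclude $v^{-1}_{N_n,\kappa,\lambda}(\eta)\to 0$, hence $\OI((M_n,f_n);-\eta)\to 0$.

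In both cases we obtain $\lim_{n\to\infty}\OI((M_n,f_n);-\eta)=0$ for every $\eta>0$, so $\{(M_n,f_n)\}$ is a boundary concentration family. There is no genuine obstacle: the corollary is essentially a dimension-to-infinity repackaging of Theorem \ref{thm:weighted main theorem}, with case (1) reduced to the already-proven Corollary \ref{cor:ball model Levy family} and case (2) reduced to the trivial observation that the bound scales as $1/N_n$. The only minor point worth verifying is that the concentration of $\{B^{N_n}_{\kappa,\lambda}\}$ along a subsequence of dimensions follows from that of $\{\ball\}_n$; this is automatic since $\OI(B^n_{\kappa,\lambda};-\eta)$ depends only on $n$, $\kappa$, $\lambda$, and $\eta$.
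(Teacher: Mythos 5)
Your proof is correct and follows essentially the same route as the paper, which derives this corollary directly from Theorem \ref{thm:weighted main theorem} combined with Lemma \ref{lem:observable and separation on model} and Corollary \ref{cor:ball model Levy family} (case (1)) and the explicit $1/(N_{n}-1)$ bound (case (2)). No gaps.
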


Now,
we will present a concrete example concerning these corollaries, especially Corollary \ref{cor:finite dimensional corollary}:
\begin{ex}\label{ex:finite dimensional example}
Let $\{N_{n}\}$ be a sequence of integers with $N_{n} \geq n$,
and let $\{\kappa_{n}\}$ be a sequence of $\kappa_{n}<0$.
For each $n$,
put $\lambda_{n}:=\sqrt{\vert \kappa_{n} \vert}$.
Define a sequence $\{(\widetilde{M}_{n},\widetilde{f}_{n})\}$ of metric measure spaces with boundary as
\begin{align*}
\widetilde{M}_{n}&:=([0,\infty) \times \mathbb{S}^{n-1},dt^{2}+e^{-2\lambda_{n} t}\,ds_{n-1}^{2}),\\
\widetilde{f}_{n}&:=(N_{n}-n)\,\lambda_{n}\,\rho_{\partial \widetilde{M}_{n}}-\log\,\bigl( (N_{n}-1)\,\lambda_{n}\, (\vol_{\mathbb{S}^{n-1}} (\mathbb{S}^{n-1}))^{-1}\bigl).
\end{align*}
Then the sequence $\{(\widetilde{M}_{n},\widetilde{f}_{n})\}$ is a boundary concentration family if and only if we have $N_{n}\,\lambda_{n} \to \infty$.

Computing the curvatures of $\widetilde{M}_{n}$,
we see the following (cf. calculations in the proof of Lemma 3.1 in \cite{S1}):
For all $z\in \partial \widetilde{M}_{n}$ and $t>0$,
\begin{equation*}
\ric^{N_{n}}_{\widetilde{f}_{n}}(\gamma'_{z}(t))=(N_{n}-1)\kappa_{n}, \quad H_{\widetilde{f}_{n},z}=(N_{n}-1)\lambda_{n}.
\end{equation*}
Moreover,
$\theta_{\widetilde{f}_{n}}(t,z)=e^{-\widetilde{f}_{n}(z)}\,s^{N_{n}-1}_{\kappa_{n},\lambda_{n}}(t)$,
where $\theta_{\widetilde{f}_{n}}(t,z)$ is defined as (\ref{eq:volume element}).
This yields $m_{\widetilde{M}_{n},\widetilde{f}_{n}}(\widetilde{M}_{n})=1$.
Furthermore,
$\DS(  (\widetilde{M}_{n},\widetilde{f}_{n});\eta )$ is equal to $r_{n}>0$ determined by
\begin{equation*}
\vol_{\mathbb{S}^{n-1}} (\mathbb{S}^{n-1})\,\int^{\infty}_{r_{n}}\,\theta_{\widetilde{f}_{n}}(t,z)\,dt=\eta;
\end{equation*}
in particular,
from Proposition \ref{prop:Kazukawa}
we deduce
\begin{equation*}
\OI((\widetilde{M}_{n},\widetilde{f}_{n});-\eta)=\DS(  (\widetilde{M}_{n},\widetilde{f}_{n});\eta )=\frac{1}{(N_{n}-1)\lambda_{n}}\,\log\frac{1}{\eta}.
\end{equation*}
We conclude the desired statement.
\end{ex}

%%%%%%%%%%%%%%%%%%%%%%%
\subsection{One dimensional cases}\label{sec:One dimensional cases}
We next summarize corollaries of Theorem \ref{thm:twisted weighted main theorem}.
Throughout this subsection,
we always assume $\dim M_{n}=n$.
Similarly to the above subsection,
one can verify the following assertions by using Theorem \ref{thm:twisted weighted main theorem}:

\begin{cor}
Let $\{\kappa_{n}\}$ be a sequence of $\kappa_{n}>0$,
and let $\{\delta_{n}\}$ be a sequence of $\delta_{n}\in \mathbb{R}$.
Let us assume $\ric^{1}_{f_{n},\partial M_{n}^{\perp}}\geq (n-1)\kappa_{n}\,e^{\frac{-4f_{n}}{n-1}}$ and $H_{f_{n},\bm_{n}}\geq 0$ for each $n$.
Suppose additionally that
$f_{n}\leq (n-1)\delta_{n}$.
If $n\,\kappa_{n}\,e^{-4\delta_{n}}\to \infty$,
then $\{(M_{n},f_{n})\}$ is a boundary concentration family.
\end{cor}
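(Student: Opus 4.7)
The plan is to reduce the statement to the previously handled hemisphere case by invoking the twisted comparison theorem. Since $\kappa_n > 0$ and the assumed mean curvature lower bound reads $H_{f_n,\partial M_n} \geq 0 = (n-1)\cdot 0 \cdot e^{-2f_n/(n-1)}$, the pair $(\kappa_n, \lambda_n) = (\kappa_n, 0)$ satisfies the convex-ball-condition (ball-condition from $\kappa_n>0$, together with $\lambda_n=0\geq 0$). Hence case (1) of Theorem \ref{thm:twisted weighted main theorem} applies with $\kappa=\kappa_n$, $\lambda=0$, $\delta=\delta_n$, yielding
\begin{equation*}
\OI((M_n,f_n);-\eta)\leq v^{-1}_{n,\,\kappa_n\,e^{-4\delta_n},\,0}(\eta)
\end{equation*}
for every $\eta\in(0,1]$.

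Next I would identify the right-hand side with the observable inscribed radius of a hemisphere. Setting $\alpha_n := \kappa_n e^{-4\delta_n}>0$, Lemma \ref{lem:observable and separation on model}(1) with $(\kappa,\lambda)=(\alpha_n,0)$ gives
\begin{equation*}
\OI(B^{n}_{\alpha_n,0};-\eta) = v^{-1}_{n,\alpha_n,0}(\eta),
\end{equation*}
so the two inequalities combine to
\begin{equation*}
\OI((M_n,f_n);-\eta) \leq \OI(B^{n}_{\alpha_n,0};-\eta).
\end{equation*}

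Finally, the hemisphere asymptotics in Theorem \ref{thm:sequence of hemisphere} (applied with the sequence $\{\alpha_n\}$ in place of $\{\kappa_n\}$ there) show that $\{B^{n}_{\alpha_n,0}\}$ is a boundary concentration family precisely when $n\alpha_n \to \infty$. Our hypothesis $n\,\kappa_n\,e^{-4\delta_n}\to \infty$ is exactly this condition, so $\OI(B^{n}_{\alpha_n,0};-\eta)\to 0$ for every $\eta>0$, and hence $\OI((M_n,f_n);-\eta)\to 0$. By Definition \ref{defi:Dirichlet Levy family}, $\{(M_n,f_n)\}$ is a boundary concentration family.

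There is no serious obstacle here: the statement is a direct combination of the twisted comparison (Theorem \ref{thm:twisted weighted main theorem}) with the explicit computation for finite-dimensional model spaces (Lemma \ref{lem:observable and separation on model}) and the critical-scale result for hemispheres (Theorem \ref{thm:sequence of hemisphere}). The only point requiring a moment of care is checking that the effective parameters $(\alpha_n,0)$ satisfy the convex-ball-condition uniformly and that the scaling $B^{n}_{\alpha_n,0}=\sqrt{\kappa\,(n\alpha_n)^{-1}}\,B^{n}_{\kappa/n,0}$ together with the scale-invariance formula $\OI(cX;-\eta)=c\,\OI(X;-\eta)$ converts the criterion $n\alpha_n\to\infty$ into boundary concentration, which is already embedded in Theorem \ref{thm:sequence of hemisphere}.
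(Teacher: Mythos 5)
Your proposal is correct and follows exactly the route the paper intends: the paper leaves this corollary as a direct consequence of Theorem \ref{thm:twisted weighted main theorem}(1) (with $\lambda=0$, which satisfies the convex-ball-condition), identified via Lemma \ref{lem:observable and separation on model} with $\OI(B^{n}_{\kappa_n e^{-4\delta_n},0};-\eta)$ and then controlled by the hemisphere criterion of Theorem \ref{thm:sequence of hemisphere}. Your write-up just makes that sketch explicit, and all the hypotheses (compact boundary, $\dim M_n=n$) are part of the standing assumptions of that subsection.
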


\begin{cor}
Let $\{\lambda_{n}\}$ be a sequence of $\lambda_{n}>0$,
and let $\{\delta_{n}\}$ be a sequence of $\delta_{n}\in \mathbb{R}$.
Let us assume $\ric^{1}_{f_{n},\partial M_{n}^{\perp}}\geq 0$ and $H_{f_{n},\bm_{n}}\geq (n-1)\lambda_{n}\,e^{\frac{-2f_{n}}{n-1}}$ for each $n$.
Suppose additionally that
$f_{n}\leq (n-1)\delta_{n}$.
If $n\,\lambda_{n}\,e^{-2\delta_{n}}\to \infty$,
then $\{(M_{n},f_{n})\}$ is a boundary concentration family.
\end{cor}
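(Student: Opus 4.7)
The plan is to derive this corollary directly from Theorem \ref{thm:twisted weighted main theorem}, in complete analogy with the mean-curvature-only corollary in Subsection \ref{sec:Positive dimensional cases}. Taking $\kappa_{n}:=0$, the pair $(\kappa_{n},\lambda_{n})$ with $\lambda_{n}>0$ satisfies the convex-ball-condition, so the hypotheses $\ric^{1}_{f_{n},\partial M_{n}^{\perp}}\geq 0=(n-1)\kappa_{n}e^{-4f_{n}/(n-1)}$, $H_{f_{n},\bm_{n}}\geq (n-1)\lambda_{n}e^{-2f_{n}/(n-1)}$, and $f_{n}\leq (n-1)\delta_{n}$ meet the assumptions of Theorem \ref{thm:twisted weighted main theorem}(1). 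Applying it gives
\[
\OI((M_{n},f_{n});-\eta) \leq v^{-1}_{n,\,0,\,\lambda_{n}e^{-2\delta_{n}}}(\eta)
\]
for every $\eta \in (0,1]$.

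The next step is to compute $v^{-1}_{n,0,\mu}$ with $\mu:=\lambda_{n}e^{-2\delta_{n}}$ explicitly. From $s_{0,\mu}(t)=1-\mu t$ and $C_{0,\mu}=1/\mu$, an elementary antiderivative calculation gives $v_{n,0,\mu}(r)=(1-\mu r)^{n}$ on $[0,1/\mu]$, and hence
\[
v^{-1}_{n,0,\mu}(\eta)=\frac{1-\eta^{1/n}}{\mu}=\frac{n(1-\eta^{1/n})}{n\,\lambda_{n}e^{-2\delta_{n}}}.
\]
This is exactly the quantity that appeared in the proof of Theorem \ref{thm:sequence of Euclidean ball} for the sequence $\{B^{n}_{0,\lambda/n}\}$, with the uniform scale $\lambda/n$ replaced by the twisted scale $\lambda_{n}e^{-2\delta_{n}}$ provided by the $N=1$ comparison.

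Finally, I would invoke the elementary asymptotic $n(1-\eta^{1/n})\to \log(1/\eta)$ as $n\to\infty$, together with the standing assumption $n\,\lambda_{n}e^{-2\delta_{n}}\to\infty$, to conclude that $\OI((M_{n},f_{n});-\eta)\to 0$ for every $\eta>0$. By Definition \ref{defi:Dirichlet Levy family}, this says that $\{(M_{n},f_{n})\}$ is a boundary concentration family. There is essentially no obstacle here: the argument is a direct specialization of Theorem \ref{thm:twisted weighted main theorem}(1), and the only input beyond that theorem is the explicit evaluation of $v_{n,0,\mu}^{-1}$ and the Taylor expansion $\eta^{1/n}=1-n^{-1}\log(1/\eta)+O(n^{-2})$. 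The role of the extra data $\{\delta_{n}\}$ is solely to absorb the $f_{n}$-dependence in the weighted mean-curvature bound, which is why the critical scale order is $n\,\lambda_{n}e^{-2\delta_{n}}$ rather than $n\,\lambda_{n}$.
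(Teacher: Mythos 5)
Your proposal is correct and follows exactly the route the paper intends: apply Theorem \ref{thm:twisted weighted main theorem}(1) with $\kappa=0$ (so the convex-ball-condition holds), and then use the explicit formula $v^{-1}_{n,0,\mu}(\eta)=(1-\eta^{1/n})/\mu$ together with $n(1-\eta^{1/n})\to\log(1/\eta)$, which is precisely the computation appearing in the proof of Theorem \ref{thm:sequence of Euclidean ball}. The verification of the hypotheses (convex-ball-condition, the reduction of the curvature bounds at $\kappa=0$, and the role of $\delta_{n}$) is all in order.
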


\begin{cor}\label{cor:one dimensional corollary}
Let $\{\kappa_{n}\}$ be a sequence of $\kappa_{n}<0$,
and let $\{\delta_{n}\}$ be a sequence of $\delta_{n}\in \mathbb{R}$.
For each $n$,
put $\lambda_{n}:=\sqrt{\vert \kappa_{n} \vert}$.
Assume $\ric^{1}_{f_{n},\partial M_{n}^{\perp}}\geq (n-1)\kappa_{n}\,e^{\frac{-4f_{n}}{n-1}}$ and $H_{f_{n},\bm_{n}}\geq (n-1)\lambda_{n}\,e^{\frac{-2f_{n}}{n-1}}$.
Suppose additionally that
$f_{n}\leq (n-1)\delta_{n}$.
If $n\,\lambda_{n}\,e^{-2\delta_{n}}\to \infty$,
then $\{(M_{n},f_{n})\}$ is a boundary concentration family.
\end{cor}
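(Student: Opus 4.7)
The plan is to derive the conclusion directly from part (2) of Theorem \ref{thm:twisted weighted main theorem} applied to each $(M_n, f_n)$ separately, and then let $n \to \infty$.

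First, I would fix $n$ and invoke Theorem \ref{thm:twisted weighted main theorem}(2) with parameters $\kappa := \kappa_n$, $\lambda := \lambda_n = \sqrt{|\kappa_n|}$, and $\delta := \delta_n$. The curvature hypotheses
\begin{equation*}
\ric^{1}_{f_n, \partial M_n^{\perp}} \geq (n-1)\kappa_n\, e^{-4f_n/(n-1)}, \qquad H_{f_n, \partial M_n} \geq (n-1)\lambda_n\, e^{-2f_n/(n-1)},
\end{equation*}
together with the density bound $f_n \leq (n-1)\delta_n$, are exactly what the theorem requires, and the case condition $\kappa_n < 0,\ \lambda_n = \sqrt{|\kappa_n|}$ holds by construction. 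Recall that $(M_n,f_n)$ is a metric measure space with boundary defined as in (\ref{eq:weighted Riemannian manifold}) with compact boundary, which is the setting in which the theorem applies. Therefore, for every $\eta \in (0,1]$,
\begin{equation*}
\OI\left((M_n, f_n); -\eta\right) \leq \frac{1}{(n-1)\lambda_n\, e^{-2\delta_n}}\, \log \frac{1}{\eta}.
\end{equation*}

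Next, I would use the hypothesis $n\,\lambda_n\, e^{-2\delta_n} \to \infty$. Since $(n-1)/n \to 1$, this growth is preserved: $(n-1)\,\lambda_n\, e^{-2\delta_n} \to \infty$ as well. Hence for each fixed $\eta \in (0,1]$ the right-hand side of the displayed estimate tends to $0$, so $\OI((M_n, f_n); -\eta) \to 0$. For $\eta > 1$ the observable inscribed radius vanishes identically. By Definition \ref{defi:Dirichlet Levy family}, $\{(M_n, f_n)\}$ is a boundary concentration family.

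There is no substantive obstacle here: the corollary is a packaging of Theorem \ref{thm:twisted weighted main theorem}(2) with the observation that the divergence of $n\,\lambda_n\, e^{-2\delta_n}$ forces the quantitative upper bound on $\OI$ to collapse. The only minor care needed is to verify that the hypothesis matches the theorem (in particular that the one-dimensional weighted curvature conditions of (\ref{eq:one dimensional curvature bound}) are exactly those stated in the corollary).
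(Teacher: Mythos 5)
Your argument is correct and is precisely the route the paper intends: the corollary is stated as a direct consequence of Theorem \ref{thm:twisted weighted main theorem}(2), whose bound $\OI((M_n,f_n);-\eta)\leq \bigl((n-1)\lambda_n e^{-2\delta_n}\bigr)^{-1}\log(1/\eta)$ tends to $0$ once $n\,\lambda_n e^{-2\delta_n}\to\infty$. No issues.
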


\begin{cor}
We assume $\ric^{1}_{f_{n},\partial M_{n}^{\perp}}\geq (n-1)\kappa\,e^{\frac{-4f_{n}}{n-1}}$ and $H_{f_{n},\bm_{n}}\geq (n-1)\lambda\,e^{\frac{-2f_{n}}{n-1}}$ for each $n$.
Suppose additionally that
$f_{n}\leq (n-1)\delta$ for some $\delta \in \mathbb{R}$.
We also assume that
one of the following holds:
\begin{enumerate}
\item $\kappa$ and $\lambda$ satisfy the convex-ball-condition;
\item $\kappa<0$ and $\lambda=\sqrt{\vert \kappa \vert}$.
\end{enumerate}
Then $\{(M_{n},f_{n})\}$ is a boundary concentration family.
\end{cor}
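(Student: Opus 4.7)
The plan is to derive the corollary directly from Theorem \ref{thm:twisted weighted main theorem}, combined with the already-established boundary concentration properties of the finite-dimensional model spaces. The hypotheses of Theorem \ref{thm:twisted weighted main theorem} hold for every $(M_{n},f_{n})$ with the same parameters $\kappa,\lambda,\delta$, so I would apply that theorem to obtain a uniform-in-$n$ closed-form upper bound on $\OI((M_{n},f_{n});-\eta)$ in each of the two regimes, then verify that the bound vanishes as $n\to\infty$ for every fixed $\eta>0$.

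Case (2), where $\kappa<0$ and $\lambda=\sqrt{\vert\kappa\vert}$, is immediate. Theorem \ref{thm:twisted weighted main theorem} yields
\begin{equation*}
\OI((M_{n},f_{n});-\eta)\leq \frac{1}{(n-1)\,\lambda\,e^{-2\delta}}\,\log\frac{1}{\eta},
\end{equation*}
and since $\lambda e^{-2\delta}>0$ is a fixed constant independent of $n$, the right-hand side tends to $0$ as $n\to\infty$.

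For case (1), Theorem \ref{thm:twisted weighted main theorem} gives
\begin{equation*}
\OI((M_{n},f_{n});-\eta)\leq v_{n,\kappa e^{-4\delta},\lambda e^{-2\delta}}^{-1}(\eta).
\end{equation*}
Here I would first verify that if $(\kappa,\lambda)$ satisfy the convex-ball-condition, so does the rescaled pair $(\kappa e^{-4\delta},\lambda e^{-2\delta})$: for $\kappa\geq 0$ this is clear from $e^{-4\delta}>0$ and $e^{-2\delta}>0$, while for $\kappa<0$ with $\lambda>\sqrt{\vert\kappa\vert}$ one multiplies both sides of this strict inequality by $e^{-2\delta}>0$ to obtain $\lambda e^{-2\delta}>\sqrt{\vert\kappa e^{-4\delta}\vert}$. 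Lemma \ref{lem:observable and separation on model}, applied with the rescaled parameters, then identifies the right-hand side with $\OI(B^{n}_{\kappa e^{-4\delta},\lambda e^{-2\delta}};-\eta)$, which tends to $0$ as $n\to\infty$ by Corollary \ref{cor:ball model Levy family}.

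Combining the two cases completes the proof. The argument is essentially an assembly of Theorem \ref{thm:twisted weighted main theorem}, Lemma \ref{lem:observable and separation on model}, and Corollary \ref{cor:ball model Levy family}; the only mildly non-obvious point is the stability of the convex-ball-condition under the replacement $(\kappa,\lambda)\mapsto(\kappa e^{-4\delta},\lambda e^{-2\delta})$, which is dispatched by the short case analysis above, so no substantive obstacle arises.
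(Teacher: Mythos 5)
Your proof is correct and follows essentially the route the paper intends: apply Theorem \ref{thm:twisted weighted main theorem} with the fixed parameters $\kappa,\lambda,\delta$, and then let $n\to\infty$ in the resulting bounds, using Lemma \ref{lem:observable and separation on model} and Corollary \ref{cor:ball model Levy family} to handle the convex-ball case (the check that the convex-ball-condition is preserved under $(\kappa,\lambda)\mapsto(\kappa e^{-4\delta},\lambda e^{-2\delta})$ is exactly the small point that needs verifying, and you do it correctly).
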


Let us construct a concrete example for the above corollaries:
\begin{ex}\label{ex:one dimensional example}
Let $\{\kappa_{n}\}$ be a sequence of $\kappa_{n}<0$,
and let $\{\delta_{n}\}$ be a sequence of $\delta_{n}\in \mathbb{R}$.
For each $n$,
we put $\lambda_{n}:=\sqrt{\vert \kappa_{n} \vert}$.
Let us denote by $(    \mathbb{S}^{n-1}_{\kappa_{n},\lambda_{n},\delta_{n}}, ds^{2}_{n-1,\kappa_{n},\lambda_{n},\delta_{n}})$ the $(n-1)$-dimensional sphere with volume
\begin{equation*}
e^{-C_{n}}  \left(\int^{\infty}_{1}\exp\left(-\frac{(n-1)\,\lambda_{n}\,e^{-2\delta_{n}}}{2}t^{2}\right)dt \right)^{-1},
\end{equation*}
where $C_{n}:=2^{-1}\,(n-1)\,(\lambda_{n}\,e^{-2\delta_{n}}-2\,\delta_{n})$.
We define a sequence $\{(\widetilde{M}_{n},\widetilde{f}_{n})\}$ of metric measure spaces with boundary as
\begin{align*}
\widetilde{M}_{n}&:=\bigl([0,\infty) \times \mathbb{S}^{n-1}_{\kappa_{n},\lambda_{n},\delta_{n}}, \,dt^{2}+\mathcal{H}^{2}_{\kappa_{n},\lambda_{n},\delta_{n}}(t)\,ds^{2}_{n-1,\kappa_{n},\lambda_{n},\delta_{n}}\bigl),\\
\widetilde{f}_{n}&:=-\frac{n-1}{2} \log\, \bigl(\rho_{\partial \widetilde{M}_{n}}+1\bigl)+(n-1)\delta_{n},
\end{align*}
where
\begin{equation*}
\mathcal{H}_{\kappa_{n},\lambda_{n},\delta_{n}}(t):=\frac{1}{\sqrt{t+1}} \,\exp \left(-\frac{\lambda_{n}\,e^{-2\delta_{n}}}{2}\bigl((t+1)^{2}-1\bigl) \right).
\end{equation*}
If $n\,\lambda_{n}\,e^{-2\delta_{n}}\to \infty$,
then $\{(\widetilde{M}_{n},\widetilde{f}_{n})\}$ is a boundary concentration family.

We derive this statement from Corollary \ref{cor:one dimensional corollary}.
We rewrite $\widetilde{M}_{n}$ as
\begin{equation*}
\widetilde{M}_{n}=\bigl([0,\infty) \times \mathbb{S}^{n-1}_{\kappa_{n},\lambda_{n},\delta_{n}},\, dt^{2}+F^{2}_{\kappa_{n},\lambda_{n},z}(t)\,ds^{2}_{n-1,\kappa_{n},\lambda_{n},\delta_{n}}\bigl),
\end{equation*}
where for each $z\in \partial \widetilde{M}_{n}$,
\begin{align*}
s_{\widetilde{f}_{n},z}(t)&:=\int^{t}_{0}\,  e^{\frac{-2\widetilde{f}_{n}(\gamma_{z}(a))}{n-1}}\,da,\\
F_{\kappa_{n},\lambda_{n},z}(t)&:= \exp \left(  \frac{\widetilde{f}_{n}(\gamma_{z}(t))-\widetilde{f}_{n}(z)}{n-1}  \right)\,s_{\kappa_{n},\lambda_{n}}(s_{\widetilde{f}_{n},z}(t)).
\end{align*}
From this expression,
one can compute the curvatures of $\widetilde{M}_{n}$ as follows (cf. calculations in the proof of Lemmas 3.5, 5.2 and 7.1 in \cite{S2}):
For all $z\in \partial \widetilde{M}_{n}$ and $t>0$
we have
\begin{equation*}
\ric^{1}_{\widetilde{f}_{n}}(\gamma'_{z}(t))=(n-1)\kappa_{n}\,e^{\frac{-4\widetilde{f}_{n}(\gamma_{z}(t))}{n-1}},\quad H_{\widetilde{f}_{n},z}=(n-1)\lambda_{n}\,e^{\frac{-2\widetilde{f}_{n}(z)}{n-1}}.
\end{equation*}
Moreover,
$\theta_{\widetilde{f}_{n}}(t,z)=e^{-\widetilde{f}_{n}(z)}\,s^{n-1}_{\kappa_{n},\lambda_{n}}(s_{\widetilde{f}_{n},z}(t))$,
and hence
\begin{equation*}
\theta_{\widetilde{f}_{n}}(t,z)=e^{C_{n}}\,\exp \left(-\frac{(n-1)\,\lambda_{n}\,e^{-2\delta_{n}}}{2}(t+1)^{2}\right);
\end{equation*}
in particular,
we see $m_{\widetilde{M}_{n},\widetilde{f}_{n}}(\widetilde{M}_{n})=1$.
Since $\widetilde{f}_{n}\leq (n-1)\delta_{n}$,
Corollary \ref{cor:one dimensional corollary} leads us to the desired conclusion.
\end{ex}

\end{document}